\documentclass[a4paper,11pt]{article}
\usepackage[left=3cm,right=3cm,top=3cm,bottom=3cm]{geometry}

\usepackage[dvips]{epsfig}
\usepackage{epstopdf}
\usepackage{amsmath,amssymb,amsthm}
\usepackage{tikz}
\usepackage{enumerate}
\usetikzlibrary{decorations.markings,calc}
\usepackage{youngtab}
\usepackage[all]{xy}
\usepackage{array}


\newtheorem{theorem}{Theorem}
\numberwithin{theorem}{section}
\newtheorem{corollary}[theorem]{Corollary}
\newtheorem{proposition}[theorem]{Proposition}
\newtheorem{lemma}[theorem]{Lemma}
\newtheorem{varexample}[theorem]{Example}
\newenvironment{example}{\begin{varexample}\em}{\em\end{varexample}}
\newtheorem{definition}[theorem]{Definition}

\theoremstyle{definition}

\tikzset{blob/.style={circle, inner sep=2pt, draw, thick, fill}}
\tikzset{arrow/.style={decoration={
    markings,
    mark=at position #1 with \arrow{>}},
    postaction=decorate}
}
\tikzset{reverse arrow/.style={decoration={
    markings,
    mark=at position #1 with \arrow{<}},
    postaction=decorate}
}


\newcommand{\ra}{\mathop{\rightarrow}}

\newcommand{\inprod}[1]{\left\langle{#1}\right\rangle}

\newcommand\pdag{{\vphantom{\dagger}}}

\newcommand{\Sn}{S_n}
\newcommand{\Sni}{S_{n+1}}
\newcommand{\Cz}{\mathbb{C}[\![z]\!]}
\newcommand{\C}{\ensuremath{\mathbb{C}}}

\newcommand{\sln}{\mathrm{sl}_n}
\newcommand{\Usln}{U(\mathrm{sl}_n)}
\newcommand{\Uqsln}{U_q(\mathrm{sl}_n)}
\newcommand{\nCFSN}{\FSN^n}
\newcommand{\Czz}{\mathbb{C}[\![ z_1, \dots, z_n ]\!]}

\newcommand\id{\ensuremath{\mathrm{id}}}
\newcommand\Hom{\mathrm{Hom}}
\newcommand\Kar{\mathrm{Kar}}

\newcommand\Ob{\ensuremath{\mathrm{Ob}}}

\newcommand{\opname}[1]{\operatorname{#1}}
\newcommand{\cat}[1]{\ensuremath{\mathbf{#1}}}
\newcommand\inalign[1]{\begin{aligned}#1\end{aligned}}

\renewcommand{\_}[0]{\nobreakdash--\hspace{0pt}}


\newcommand{\V}{\cat{Vect}}

\newcommand{\iiV}{\cat{2Vect}}
\newcommand{\FSN}{{\cat{S}}}
\newcommand{\Gpd}{\cat{Gpd}}
\newcommand\Rep{\mathrm{Rep}}

\newcommand{\Span}{\opname{\bf Span}}

\newcommand{\SiiG}{{\bf\Span(\Gpd)}}
\newcommand*\sxto[1]{\xrightarrow{\smash{#1}}}
\newcommand*\sxlto[1]{\xleftarrow{\smash{#1}}}

\newcommand{\FV}{\Lambda}
\newcommand{\HV}[1]{[#1,\V]}
\newcommand{\A}{\mathbf{a}}

\newcommand{\injAsA}{i_{A^{\dagger} \circ A}}



\newcolumntype{E}{@{}c@{}}

\begin{document}

\setcounter{MaxMatrixCols}{12}

\title{The Categorified Heisenberg Algebra I:
\\
A Combinatorial Representation}
\author{Jeffrey C. Morton\footnote{This work partially financed by Portuguese funds via the Funda\c{c}\~{a}o para a Ci\^encia e a Tecnologia, through project number PTDC/MAT/101503/2008, New Geometry and Topology.}
\\
Department of Mathematics, University of Hamburg
\\
\texttt{jeffrey.c.morton@theoreticalatlas.net}
\\
\\
Jamie Vicary
\\
Centre of Quantum Technologies, University of Singapore
\\
and Department of Computer Science, University of Oxford
\\
\texttt{jamie.vicary@cs.ox.ac.uk}}

\maketitle

\begin{abstract}We give an introductory account of Khovanov's categorification of the Heisenberg algebra, and construct a combinatorial model for it in a 2-category of spans of groupoids. We also treat a categorification of $U(\mathrm{sl}_n)$ in a similar way. These give rise to standard representations on vector spaces through a linearization process.
\end{abstract}

\section{Introduction}

\subsection{Overview}

Our objective in this paper is to describe a combinatorial model of Khovanov's categorification of the Heisenberg algebra~\cite{khovanov}, using a natural construction based on the groupoidification programme of Baez and Dolan~\cite{finfeyn, hdavii}.  This gives rise to a narrative which serves to `explain' the structure of the categorified algebra in terms of the combinatorics of finite sets. It is also fundamental, giving rise to known linear representations via a canonical process of linearization. This account is one part of a theory of representations of the categorified Heisenberg algebra in terms of free symmetric algebraic structures, described in a forthcoming companion article~\cite{mortonvicary}.

The model of the categorified Heisenberg algebra which we describe is based on a groupoidification of the quantum harmonic oscillator, a simple quantum-mechanical system.  This system has an infinite-dimensional Hilbert space of states called \emph{Fock space}, which carries an action of a \emph{Heisenberg algebra}.  The simplest of these, describing a quantum harmonic oscillator with a single degree of freedom, is the free complex algebra on generators $\A^\dag$ and $\A$, called the \emph{creation} and \emph{annihilation} operators respectively, modulo the commutation relation
\begin{equation}
\label{eq:ccr}
\A \A ^\dagger - \A ^\dagger \A = 1.
\end{equation}
The action on Fock space involves unbounded operators, and equation~\eqref{eq:ccr} is only required to hold on a dense domain.

The Fock space for a single oscillator has a standard basis of states called the Fock basis, which are labeled by non-negative integer \textit{energy levels}.  Elements of this basis represent states in which the system contains an integer number of energy quanta, interpreted in a quantum field theory setting as `particles'. The creation and annihilation operators act on this basis to increase or decrease the number of particles by one.

This description is already understood to be a `shadow', or \textit{decategorification}, of a richer perspective in which the actual sets of particles are themselves the relevant mathematical objects~\cite{finfeyn, hdavii, mstuff}. Thus, for instance, rather than merely describing a change in particle number, one can instead consider the maps between sets of particles which \textit{witness} that change. This makes room for more structure, so that one can consider the action of the symmetric group which corresponds to physically permuting the particles, an act which has no nontrivial mathematical representation in the case of ordinary Fock space.  The usual Fock space picture is recovered in the decategorification (or `degroupoidification') of this structure.

The categorified Heisenberg algebra captures the interesting mathematical statements that can be made in this richer setting. The new contribution here is the interpretation of these statements in terms of the combinatorics of finite sets.

These structures have an abstract mathematical description which makes them more general than any one model. In particular, there are applications in computer science~\cite{differentiallambda, axiomaticsfiore, generalisedspecies} for which a Fock space--like structure represents an unlimited number of copies of some logical resource; the constructions of this paper are relevant to categorifications of this scenario. We will explore this more general perspective in a more technical article~\cite{mortonvicary}, where we show how representations of the categorified Heisenberg algebra arise generically on free symmetric pseudomonoid structures internal to monoidal 2\-categories with sufficiently good properties.

\subsection{Khovanov's categorification}
\label{sec:khovanov}

More technically, to \textit{categorify} a complex algebra means to find a monoidal category \cat{C} with coproducts, such that the algebra can be recovered as the complexification of the monoid of isomorphism classes of objects of \cat{C}, with the vector space structure on the algebra arising from the coproduct structure in \cat{C}. This can be seen as the composition of two mathematical processes:
\begin{equation}
  [\text{Monoidal Categories}] \sxto{K_0} [\text{Rings}] \sxto{\mathbb{C} \otimes -} [\text{Algebras}]
\end{equation}
Khovanov has given a categorification of the Heisenberg algebra in this sense~\cite{khovanov}, part of a broader programme which includes the  categorification of quantum groups~\cite{KL, webster-diag}.  In general, this programme involves the construction of monoidal categories whose morphisms are classes of diagrams, sometimes with
decorations of various kinds, modulo certain
topological identifications. 

Khovanov's categorification of the Heisenberg algebra takes the form of a monoidal category \cat{H'} with a zero object and biproducts, with generating objects $Q_+$ and $Q_-$ depicted as upwards- and downwards-pointing strands respectively:
\begin{equation*}
\begin{aligned}
\begin{tikzpicture}
\draw [arrow=0.5] (0,0) node [below] {$Q_+$} to (0,1);
\end{tikzpicture}
\end{aligned}
\hspace{100pt}
\begin{aligned}
\begin{tikzpicture}
\draw [reverse arrow=0.5] (0,0) node [below] {$Q_-$} to (0,1);
\end{tikzpicture}
\end{aligned}
\end{equation*}
Tensor product is represented by horizontal juxtaposition. The generating morphisms have the following graphical representations:
\newcommand{\centerdia}[1]{\makebox[2cm]{\ensuremath{#1}}}
\begin{align*}
\centerdia{\begin{aligned}
\begin{tikzpicture}
\draw [white] (0,0) to (0,-1.0);
\draw (0,0) to (0,-0.2) to [out=down, in=down, looseness=2] (1,-0.2) to (1,0);
\draw [<-] (0,-0.2) to (0,0);
\draw [-<] (1,0) to (1,-0.2);
\end{tikzpicture}
\end{aligned}}
&&
\centerdia{\begin{aligned}
\begin{tikzpicture}
\draw [white] (0,-0.2) to (0,0.8);
\draw (0,-0.2) to (0,0) to [out=up, in=up, looseness=2] (1,0) to (1,-0.2);
\draw [-<] (0,-0.2) to (0,0);
\draw [<-] (1,0) to (1,-0.2);
\end{tikzpicture}
\end{aligned}}
&&
\centerdia{\begin{aligned}
\begin{tikzpicture}
\draw [white] (0,0) to (0,-1.0);
\draw (0,0) to (0,-0.2) to [out=down, in=down, looseness=2] (1,-0.2) to (1,0);
\draw [>-] (0,-0.2) to (0,0);
\draw [->] (1,0) to (1,-0.2);
\end{tikzpicture}
\end{aligned}}
&&
\centerdia{\begin{aligned}
\begin{tikzpicture}
\draw [white] (0,-0.2) to (0,0.8);
\draw (0,-0.2) to (0,0) to [out=up, in=up, looseness=2] (1,0) to (1,-0.2);
\draw [->] (0,-0.2) to (0,0);
\draw [>-] (1,0) to (1,-0.2);
\end{tikzpicture}
\end{aligned}}
\end{align*}

\vspace{-10pt}
\begin{align*}
\centerdia{\begin{aligned}
\begin{tikzpicture}
\draw [arrow=0.1, arrow=0.9] (0,0) to [out=up, in=down] (1,1.5);
\draw [arrow=0.1, arrow=0.9] (1,0) to [out=up, in=down] (0,1.5);
\end{tikzpicture}
\end{aligned}}
&&
\centerdia{\begin{aligned}
\begin{tikzpicture}
\draw [reverse arrow=0.1, reverse arrow=0.9] (0,0) to [out=up, in=down] (1,1.5);
\draw [reverse arrow=0.1, reverse arrow=0.9] (1,0) to [out=up, in=down] (0,1.5);
\end{tikzpicture}
\end{aligned}}
&&
\centerdia{\begin{aligned}
\begin{tikzpicture}
\draw [arrow=0.1, arrow=0.9] (0,0) to [out=up, in=down] (1,1.5);
\draw [reverse arrow=0.1, reverse arrow=0.9] (1,0) to [out=up, in=down] (0,1.5);
\end{tikzpicture}
\end{aligned}}
&&
\centerdia{\begin{aligned}
\begin{tikzpicture}
\draw [reverse arrow=0.1, reverse arrow=0.9] (0,0) to [out=up, in=down] (1,1.5);
\draw [arrow=0.1, arrow=0.9] (1,0) to [out=up, in=down] (0,1.5);
\end{tikzpicture}
\end{aligned}}
\end{align*}
The following equations are then imposed between composites of these generating morphisms:

\vspace{10pt}
\def\tempscale{0.8}
\def\tempgap{3pt}
\noindent\begin{tabular*}{\textwidth}{@{\extracolsep{\fill}}E*{3}{>{$}c<{$}}E}
&
\begin{aligned}
\begin{tikzpicture}[scale=\tempscale]
\draw (0,-0.2) to [out=down, in=down, looseness=2] (1,-0.2);
\draw (0,-2.8) to [out=up, in=up, looseness=2] (1,-2.8);
\draw [->] (0,0) to (0,-0.2);
\draw [-<] (1,0) to (1,-0.2);
\draw [-<] (0,-3) to (0,-2.8);
\draw [->] (1,-3) to (1,-2.8);
\end{tikzpicture}
\end{aligned}
\hspace{\tempgap}+\hspace{\tempgap}
\begin{aligned}
\begin{tikzpicture}[scale=\tempscale]
\draw [white] (0,0.2) to (1,0.2);
\draw [reverse arrow=0.05, reverse arrow=0.5, reverse arrow=0.95] (0,0)
    to [out=up, in=down] (1,1.5) to [out=up, in=down] (0,3);
\draw [arrow=0.05, arrow=0.5, arrow=0.95] (1,0)
    to [out=up, in=down] (0,1.5) to [out=up, in=down] (1,3);
\end{tikzpicture}
\end{aligned}
\hspace{\tempgap}=\hspace{\tempgap}
\begin{aligned}
\begin{tikzpicture}[scale=\tempscale]
\draw [white] (0,0.2) to (1,0.2);
\draw [reverse arrow=0.5] (0,0)to (0,3);
\draw [arrow=0.5] (1,0) to (1,3);
\end{tikzpicture}
\end{aligned}
&
\begin{aligned}
\begin{tikzpicture}[scale=\tempscale]
\draw [arrow=0.05, arrow=0.5, arrow=0.95] (0,0)
    to (0,1)
    to [out=up, in=down] (-1,2)
    to [out=up, in=up, looseness=1.5] (-2,2)
    to (-2,1)
    to [out=down, in=down, looseness=1.5] (-1,1)
    to [out=up, in=down] (0,2)
    to (0,3);
\end{tikzpicture}
\end{aligned}
\hspace{\tempgap}=\hspace{\tempgap}
0
&
\begin{aligned}
\begin{tikzpicture}[scale=\tempscale]
\draw [reverse arrow=0, reverse arrow=0.5] (0,0)
    to [out=up, in=up, looseness=2] (1,0)
    to [out=down, in=down, looseness=2] (0,0);
\end{tikzpicture}
\end{aligned}
\hspace{\tempgap}=\hspace{\tempgap}
\id
&
\end{tabular*}

\vspace{10pt}
\noindent\begin{tabular*}{\textwidth}{@{\extracolsep{\fill}}E*{4}{>{$}c<{$}}E}
&
\begin{aligned}
\begin{tikzpicture}[scale=\tempscale]
\draw [white] (0,0.2) to (1,0.2);
\draw [reverse arrow=0.05, reverse arrow=0.5, reverse arrow=0.95] (0,0)
    to [out=up, in=down] (1,1.5) to [out=up, in=down] (0,3);
\draw [reverse arrow=0.05, reverse arrow=0.5, reverse arrow=0.95] (1,0)
    to [out=up, in=down] (0,1.5) to [out=up, in=down] (1,3);
\end{tikzpicture}
\end{aligned}
\hspace{\tempgap}=\hspace{\tempgap}
\begin{aligned}
\begin{tikzpicture}[scale=\tempscale]
\draw [white] (0,0.2) to (1,0.2);
\draw [reverse arrow=0.5] (0,0)to (0,3);
\draw [reverse arrow=0.5] (1,0) to (1,3);
\end{tikzpicture}
\end{aligned}
&
\begin{aligned}
\begin{tikzpicture}[scale=\tempscale]
\draw [white] (0,0.2) to (1,0.2);
\draw [arrow=0.05, arrow=0.5, arrow=0.95] (0,0)
    to [out=up, in=down] (1,1.5) to [out=up, in=down] (0,3);
\draw [arrow=0.05, arrow=0.5, arrow=0.95] (1,0)
    to [out=up, in=down] (0,1.5) to [out=up, in=down] (1,3);
\end{tikzpicture}
\end{aligned}
\hspace{\tempgap}=\hspace{\tempgap}
\begin{aligned}
\begin{tikzpicture}[scale=\tempscale]
\draw [white] (0,0.2) to (1,0.2);
\draw [arrow=0.5] (0,0)to (0,3);
\draw [arrow=0.5] (1,0) to (1,3);
\end{tikzpicture}
\end{aligned}
&
\begin{aligned}
\begin{tikzpicture}[scale=\tempscale]
\draw [white] (0,0.2) to (1,0.2);
\draw [arrow=0.05, arrow=0.5, arrow=0.95] (0,0)
    to [out=up, in=down] (1,1.5) to [out=up, in=down] (0,3);
\draw [reverse arrow=0.05, reverse arrow=0.5, reverse arrow=0.95] (1,0)
    to [out=up, in=down] (0,1.5) to [out=up, in=down] (1,3);
\end{tikzpicture}
\end{aligned}
\hspace{\tempgap}=\hspace{\tempgap}
\begin{aligned}
\begin{tikzpicture}[scale=\tempscale]
\draw [white] (0,0.2) to (1,0.2);
\draw [arrow=0.5] (0,0)to (0,3);
\draw [reverse arrow=0.5] (1,0) to (1,3);
\end{tikzpicture}
\end{aligned}
&
\begin{aligned}
\begin{tikzpicture}[xscale=\tempscale, yscale=1.5*\tempscale]
\draw [arrow=0.08, arrow=0.92] (0,0)
    to [out=up, in=down] (1,1)
    to [out=up, in=down] (0,2);
\draw [arrow=0.08, arrow=0.92] (-1,0) to [out=up, in=down] (1,2);
\draw [arrow=0.08, arrow=0.92] (1,0) to [out=up, in=down] (-1,2);
\end{tikzpicture}
\end{aligned}
\hspace{\tempgap}=\hspace{\tempgap}
\begin{aligned}
\begin{tikzpicture}[xscale=\tempscale, yscale=1.5*\tempscale]
\draw [arrow=0.08, arrow=0.92] (0,0)
    to [out=up, in=down] (-1,1)
    to [out=up, in=down] (0,2);
\draw [arrow=0.08, arrow=0.92] (-1,0) to [out=up, in=down] (1,2);
\draw [arrow=0.08, arrow=0.92] (1,0) to [out=up, in=down] (-1,2);
\end{tikzpicture}
\end{aligned}
&
\end{tabular*}

\vspace{10pt}
\noindent\begin{tabular*}{\textwidth}{@{\extracolsep{\fill}}E*{4}{>{$}c<{$}}E}
&
\begin{aligned}
\begin{tikzpicture}[scale=\tempscale]
\draw [arrow=0.05, arrow=0.5, arrow=0.95] (0,0) to (0,1.5) to [out=up, in=up, looseness=2] (1,1.5) to [out=down, in=down, looseness=2] (2,1.5) to (2,3);
\end{tikzpicture}
\end{aligned}
\hspace{\tempgap}=\hspace{\tempgap}
\begin{aligned}
\begin{tikzpicture}[scale=\tempscale]
\draw [arrow=0.5] (0,0) to (0,3);
\end{tikzpicture}
\end{aligned}
&
\begin{aligned}
\begin{tikzpicture}[scale=\tempscale]
\draw [reverse arrow=0.05, reverse arrow=0.5, reverse arrow=0.95] (0,0) to (0,1.5) to [out=up, in=up, looseness=2] (-1,1.5) to [out=down, in=down, looseness=2] (-2,1.5) to (-2,3);
\end{tikzpicture}
\end{aligned}
\hspace{\tempgap}=\hspace{\tempgap}
\begin{aligned}
\begin{tikzpicture}[scale=\tempscale]
\draw [reverse arrow=0.5] (0,0) to (0,3);
\end{tikzpicture}
\end{aligned}
&
\begin{aligned}
\begin{tikzpicture}[scale=\tempscale]
\draw [reverse arrow=0.05, reverse arrow=0.5, reverse arrow=0.95] (0,0) to (0,1.5) to [out=up, in=up, looseness=2] (1,1.5) to [out=down, in=down, looseness=2] (2,1.5) to (2,3);
\end{tikzpicture}
\end{aligned}
\hspace{\tempgap}=\hspace{\tempgap}
\begin{aligned}
\begin{tikzpicture}[scale=\tempscale]
\draw [reverse arrow=0.5] (0,0) to (0,3);
\end{tikzpicture}
\end{aligned}
&
\begin{aligned}
\begin{tikzpicture}[scale=\tempscale]
\draw [arrow=0.05, arrow=0.5, arrow=0.95] (0,0) to (0,1.5) to [out=up, in=up, looseness=2] (-1,1.5) to [out=down, in=down, looseness=2] (-2,1.5) to (-2,3);
\end{tikzpicture}
\end{aligned}
\hspace{\tempgap}=\hspace{\tempgap}
\begin{aligned}
\begin{tikzpicture}[scale=\tempscale]
\draw [arrow=0.5] (0,0) to (0,3);
\end{tikzpicture}
\end{aligned}
&
\end{tabular*}
\vspace{10pt}

\noindent
These crossings cannot be interpreted as braidings as they are not invertible, as emphasized by the top-left diagram. These equations imply an isomorphism
\begin{equation}
Q_- \otimes Q_+ \simeq (Q_+ \otimes Q_-) \oplus I
\end{equation}
where $I$ is the monoidal unit object, giving a categorification of the Heisenberg algebra relation~\eqref{eq:ccr}.

Khovanov goes on to show that this has a representation on a monoidal category whose objects are bimodules describing restriction and induction of representations of symmetric groups.  As we will see, this amounts to a representation on \emph{2\_vector spaces} in the sense of Kapranov and Voevodsky, giving a 2\-functor
\begin{equation}
\label{eq:khovanovrepresentation}
\cat{\Omega_{H'}} \sxto K \cat{2Vect}
\end{equation}
where $\cat{\Omega_{H'}}$ is a 2\-category with one object, whose morphisms and 2-morphisms come from the objects and morphisms of \cat{H'}. This takes the single object of $\cat{\Omega_{H'}}$ to a 2\_vector space $\coprod_n \mathrm{Rep}(S_n)$, the coproduct of the categories of representations of the symmetric groups, which plays the role of a categorified Fock space.

We will restrict our attention to this monoidal category $\cat{H'}$ of diagrams in Section \ref{sec:groupoidification}, when we give them a combinatorial interpretation. However, in fact, the categorification of the full Heisenberg algebra with multiple generators $a_n$ requires somewhat more.  The generators $a_n$ are represented in the categorification by permutation-invariant subobjects of products  of the basic objects $Q_+$ and $Q_-$.  In the diagrammatic categorification, this is done formally by completing $\cat{H'}$ to a category called $\cat{H}$, in which all the required subobjects of $Q_+^n$ exist.  In Section \ref{sec:linear}, we will turn to this in more detail, and in particular \ref{sec:symmetrizers} discusses how the particular symmetrizer subobjects which appear in this completion occur naturally in $\cat{2Vect}$ as sub-functors, and describes some of them in detail.

\subsection{Groupoidification}

The concrete model of $\cat{H'}$ we find arises from a seemingly different approach to categorifying the Heisenberg algebra, based on the \textit{groupoidification} program of Baez and Dolan~\cite{hdavii}. A \textit{groupoid} is a category in which all morphisms are invertible. The central objects of study in the groupoidification programme are \textit{spans of groupoids}, diagrams of the form
\begin{equation}
\begin{aligned}
\begin{tikzpicture}[xscale=2, yscale=1.5]
\node (T) at (0,0) {$\cat X$};
\node (L) at (-1,-1) {$\cat B$};
\node (R) at (1,-1) {$\cat A$};
\draw [->] (T) to node [auto, swap] {$G$} (L);
\draw [->] (T) to node [auto] {$F$} (R);
\end{tikzpicture}
\end{aligned}
\end{equation}
where \cat A, \cat B and \cat X are groupoids, and $F$ and $G$ are functors. There has been extensive work within category theory on constructions involving spans~\cite{spanconstruction}.  The most immediately important fact is that spans of groupoids can be organized into a 2\-category $\SiiG$. The main construction of this paper is a 2-functor
\begin{equation}
\label{eq:combinatorialrepresentation}
\cat{\Omega_{H'}} \sxto C \cat{Span(Gpd)},
\end{equation}
yielding a representation of Khovanov's categorification of the Heisenberg algebra in terms of spans of groupoids.

An important interpretation of groupoidification comes from physics: groupoids represent \emph{physical symmetries}, and spans represent \textit{spaces of histories}. The idea is that configuration spaces of physical systems can be represented as groupoids in a way that usefully encodes their symmetries, and that spans of groupoids encode ways in which these states and their symmetries can be transformed via physical processes.  In our example of the harmonic oscillator, these configurations are the non-negative integer--valued energy eigenstates.  Then a span represents a space of \textit{histories}, with its source and target maps picking out the starting and ending configurations.  Furthermore, these are not just set-maps of the objects (histories and configurations), but functors, which also describe how symmetries of histories act on starting and ending configurations.

The single object of \cat{H'} is mapped by $C$ to the groupoid \FSN{} of finite sets and bijections. Spans involving this groupoid
have an elegant interpretation in term of the combinatorics of finite sets, thanks to work of Joyal~\cite{joyal1986} and Baez and Dolan~\cite{finfeyn}. The resulting combinatorial interpretation of our representation gives us a new perspective on the categorified Heisenberg algebra.

A construction called \emph{2-linearization} \cite{gpd2vect} converts a span of groupoids to a 2--linear map between 2--vector spaces. This resulting 2--linear map can be thought of as being `accounted for' by the underlying span of groupoids, giving rise to a useful combinatorial perspective on the mathematics.
 The 2\-linearization process gives a 2\-functor of the following type:
\begin{equation}
\cat{Span(Gpd)} \sxto\Lambda \cat{2Vect}
\end{equation}
Composing this functor with our representation~\eqref{eq:combinatorialrepresentation} gives us a linear representation of the categorified Heisenberg algebra:
\begin{equation}
\cat{\Omega_{H'}} \sxto C \cat{Span(Gpd)} \sxto \Lambda \cat{2Vect}
\end{equation}
We will see that this reproduces Khovanov's representation~\eqref{eq:khovanovrepresentation}, and hence can be seen as giving a concrete combinatorial `explanation' of his construction.

In Section~\ref{sec:sln} we extend these ideas to produce a groupoidification of the universal enveloping algebras $U(\mathrm{sl}_n)$ of the Lie algebras $\mathrm{sl}_n$. Since these algebras are closely related to the Heisenberg algebra it is perhaps not surprising that such a treatment can be given. The treatments of  both families of algebras is unified in the accompanying article~\cite{mortonvicary}, in which the representations of the categorified Heisenberg algebra and categorified $U(\mathrm{sl}_n)$ are both seen as arising from free symmetric monoidal groupoids: for the Heisenberg algebra, on the trivial groupoid with one morphism; and for $U(\mathrm{sl}_n)$, on the discrete groupoid with $n$ morphisms. The construction relating Heisenberg algebras and $U(\mathrm{sl}_n)$ is related to Kac-Moody algebras and their categorifications~\cite{rouquier-km}, so groupoidification may also provide a useful perspective in this case.

In the $q$-deformed case,  categorifications in terms of 2\_vector spaces have been described as a part of the Khovanov-Lauda programme~\cite{KL}. The groupoidification formalism used here, based on the groupoid of finite sets and bijections,  cannot be directly applied in this case.  Baez, Hoffnung and Walker~\cite{hdavii} have suggested that such groupoidifications should  not be in terms of bijections of sets, but rather linear bijections of vector spaces over the finite field $\mathbb{F}_q$ with $q$ elements, for $q$ a prime power.  These would used as groupoidifications of the Hecke algebras which appear in place of symmetric group algebras in the categorifications of quantum groups. If successful, this would yield combinatorial models for these categorified $q$-deformed algebras, yielding further insight into their structure.

\subsection*{Acknowledgements}

The authors are grateful to John Baez, Marcelo Fiore, Weiwei Pan, Sam Staton and Chenchang Zhu for useful comments and discussions. The graphics in this paper were produced using \texttt{TikZ} and \texttt{xypic}.

\section{Groupoidification of the Heisenberg algebra}
\label{sec:groupoidification}

\subsection{Spans of groupoids}

Our combinatorial representation of the categorified Heisenberg algebra will be given in terms of \emph{spans of groupoids}. A groupoid is a category in which all morphisms are invertible, and a span of groupoids from \cat A to \cat B is a diagram of the following form, where $\cat A$, $\cat B$ and $\cat X$ are groupoids and $F$ and $G$ are functors:
\begin{equation}
\begin{aligned}
\begin{tikzpicture}[xscale=2, yscale=1.5]
\node (T) at (0,0) {$\cat X$};
\node (L) at (-1,-1) {$\cat B$};
\node (R) at (1,-1) {$\cat A$};
\draw [->] (T) to node [auto, swap] {$G$} (L);
\draw [->] (T) to node [auto] {$F$} (R);
\end{tikzpicture}
\end{aligned}
\end{equation}
A span like this forms a 1-morphism of type $\cat A \to \cat B$ in the monoidal 2-category \cat{Span(Gpd)}. The symmetry in the definition of a span means that, for any span $\cat B \sxlto G \cat X \sxto F \cat A$ as above, we can define its \emph{converse} $(\cat B \sxlto G \cat X \sxto F \cat A) ^\dag$ as the span $\cat A \sxlto F \cat X \sxto G \cat B$.  We will note that this converse is in fact an adjoint.

If we think of the objects of \cat A and \cat B as being the states of some physical system, then any $x \in \Ob(\cat X)$ can be interpreted as a `history' relating the state $F(x) \in \Ob(\cat A)$ to the state $G(x) \in \Ob(\cat B)$. Because we are working with groupoids, our states and histories come equipped with symmetry groups, and the functors $F$ and $G$ show how the symmetries of histories are mapped to symmetries of states.

The combinatorial interpretation that we will explore for these spans arises from the fundamental role played here by the groupoid of finite sets. The Fock space is represented by the groupoid ${\FSN}$ and the annihilation and creation operators $A$ and $A ^\dag$ by the following spans of groupoids:
\begin{equation}
\label{eq:loweringraising}
\begin{aligned}
\begin{tikzpicture}[xscale=2, yscale=1.5]
\node (T) at (0,0) {\FSN};
\node (L) at (-1,-1) {\FSN};
\node (R) at (1,-1) {\FSN};
\draw [->] (T) to node [auto, swap] {\id} (L);
\draw [->] (T) to node [auto] {$+1$} (R);
\end{tikzpicture}
\end{aligned}
\hspace{70pt}
\begin{aligned}
\begin{tikzpicture}[xscale=2, yscale=1.5]
\node (T) at (0,0) {\FSN};
\node (L) at (-1,-1) {\FSN};
\node (R) at (1,-1) {\FSN};
\draw [->] (T) to node [auto, swap] {$+1$} (L);
\draw [->] (T) to node [auto] {\id} (R);
\end{tikzpicture}
\end{aligned}
\end{equation}
Here $\FSN$ is the groupoid of finite sets and bijections, and $\FSN \sxto {+1} \FSN$ is the functor taking the disjoint union with the one-element set.

A different perspective on $\FSN$ is to see it as the \textit{free symmetric monoidal groupoid} on the trivial groupoid \cat 1 with one object and one morphism. The functor $\FSN \sxto{+1} \FSN$ then arises by taking the tensor product with the generating object. In fact, this free symmetric monoidal perspective is fundamental, and allows for the construction of a representation of the categorified Heisenberg algebra in completely abstract terms. We develop this perspective in detail in the companion article~\cite{mortonvicary}.

\subsection{Combinatorics}
\label{sec:combinatorics}

Spans of groupoids can be used to reason about the combinatorics of structures on finite sets. The theory of \emph{stuff types}~\cite{finfeyn} has been developed to describe how this works, generalizing Joyal's theory of \emph{structure types}~\cite{joyal1986}. Given some particular structure of interest constructed from a finite set, the groupoid \cat G of \textit{models} of this structure can be built, with morphisms given by symmetries that relate one model to another. This comes equipped with a functor $\cat G \sxto F \cat \FSN$, called a \textit{stuff type}, where \FSN{} is the groupoid of finite sets and bijections. This functor assigns to each model its underlying set, and to symmetries of models the induced bijections on the underlying sets. We can interpret any abstract stuff type as describing some combinatorial structure; in general, a finite set equipped with some structure which satisfies some property. A stuff type with good properties can be \textit{degroupoidified}, a form of \emph{linearization}, giving rise to a generating function for the structure in the ordinary sense.

Since every groupoid has a unique functor to the 1-element set, stuff types are precisely spans of the form
\begin{equation}
\begin{aligned}
\begin{tikzpicture}[xscale=2, yscale=1.5]
\node (T) at (0,0) {$\cat G$};
\node (L) at (-1,-1) {\FSN};
\node (R) at (1,-1) {$\cat 1\,,$};
\draw [->] (T) to node [auto, swap] {$F$} (L);
\draw [->] (T) to node [auto] {} (R);
\end{tikzpicture}
\end{aligned}
\end{equation}
which are morphisms of type $\cat 1 \to \cat S$ in \cat{Span(Gpd)}. A \emph{stuff operator} is a span of groupoids of type $\cat S \to \cat S$:
\begin{equation}
\begin{aligned}
\begin{tikzpicture}[xscale=2, yscale=1.5]
\node (T) at (0,0) {$\cat H$};
\node (L) at (-1,-1) {\FSN};
\node (R) at (1,-1) {$\FSN$};
\draw [->] (T) to node [auto, swap] {$K$} (L);
\draw [->] (T) to node [auto] {$J$} (R);
\end{tikzpicture}
\end{aligned}
\end{equation}
A stuff operator thus contains a single groupoid \cat H of models, but two potentially different descriptions $K$ and $J$ of underlying sets; it tells us  how the same structure can be built in different ways. In the physical interpretation we have mentioned, these models --- the objects in the middle groupoid --- are seen as \textit{histories}, and the two underlying sets are the starting and ending configurations of these histories. Stuff operators can act on stuff types by composition in \cat{Span(Gpd)} as described in Section~\ref{sec:composingspans}, producing new stuff types whose structures are composites of those from the original stuff type with the histories from the stuff operator.

For example, consider the stuff type $\FSN \sxto{+1} \FSN$ which takes the union of each set with a chosen 1-element set. This represents the structure `finite sets with a chosen element', with symmetries given by bijections that leave the chosen element fixed. Then our annihilation operator $A$ defined in~\eqref{eq:loweringraising} is a stuff operator which treats an $n$-element set in the middle copy of \FSN{} in two different ways: by the right leg, as an $n\!+\!1$--element set with a chosen element; and by the left leg, simply as an $n$-element set. It can be thought of as a `rule' for how to consider an $n\!+\!1$--element set as an $n$-element set --- or more simply, as a way to \emph{remove an element} from a finite set.

\subsection{Composing spans}
\label{sec:composingspans}

Given spans of groupoids \mbox{$\cat B \sxlto G \cat X \sxto F \cat A$} and \mbox{$\cat C \sxlto {K} \cat Y \sxto {J} \cat B$}, we can compose them by constructing a \emph{weak pullback} groupoid $(J \downarrow G)$:
\begin{equation}
\label{eq:composespans}
\begin{aligned}
\begin{tikzpicture}[xscale=2, yscale=1.5]
\node (Y) at (0,0) {$\cat Y$};
\node (C) at (-1,-1) {\cat C};
\node (B) at (1,-1) {$\cat B$};
\node (A) at (3,-1) {$\cat A$};
\node (X) at (2,0) {$\cat X$};
\node (P) at (1,1) {$ {(J \downarrow G)}$};
\draw [->] (X) to node [auto] {$F$} (A);
\draw [->] (X) to node [auto, swap] {$G$} (B);
\draw [->] (Y) to node [auto] {$J$} (B);
\draw [->] (Y) to node [auto, swap] {$K$} (C);
\draw [->] (P) to node [auto, swap] {$P_{\cat Y}$} (Y);
\draw [->] (P) to node [auto] {$P_{\cat X}$} (X);
\draw [->, double] (Y) to
    node [auto] {$\alpha$}
    node [auto, swap] {$\simeq$}
        (X);
\draw [->] (P)
    to [out=180, in=90]
    node [pos=0.5, auto, swap]
    {$K \circ P_\cat Y$}
        (C);
\draw [->] (P)
    to [out=0, in=90]
    node [pos=0.5, auto]
    {$F \circ P_\cat X$}
        (A);
\end{tikzpicture}
\end{aligned}
\end{equation}
This groupoid $(J \downarrow G)$ is equipped with functors \mbox{$(J \downarrow G) \sxto {P _\cat Y} \cat Y$} and \mbox{$(J \downarrow G) \sxto {P _\cat X} \cat X$}, and a natural isomorphism $J \circ P _\cat Y \sxto \alpha G \circ P _\cat Y$. It is defined to satisfy a universal property: for any groupoid $\cat Z$ equipped with functors $\cat Z \sxto{Z_\cat Y} \cat Y$ and $\cat Z \sxto{Z _\cat X} \cat X$ and a natural isomorphism $J \circ Z_\cat X \sxto \zeta G \circ Z _\cat Y$, there must exist a functor $\cat Z \sxto L \cat (J \downarrow G)$, unique up to isomorphism, such that $L \circ \alpha = \zeta$. The resulting composite span is defined to be \mbox{$\cat C \sxlto {K \circ P _\cat Y} (J \downarrow G) \sxto {F \circ P_ \cat X} \cat A$}.

Based on the universal property described above, a standard construction for $(J \downarrow G)$ is the following groupoid:
\begin{itemize}
\item \textbf{Objects} are triples $\big(x \in \Ob(\cat X) , y \in \Ob (\cat Y),  G(x) \sxto f J(y) \big)$.
\item \textbf{Morphisms} $(x_1, y_1, f_1) \to (x_2, y_2, f_2)$ are pairs of morphisms $x_1 \sxto a x_2$ and $y_1 \sxto b y_2$ satisfying the following commuting diagram:
\begin{equation}
\begin{aligned}
\begin{tikzpicture}
\node (G1) at (0,0) {$G(x_1)$};
\node (J1) at (2,0) {$J(y_1)$};
\node (G2) at (0,-2) {$G(x_2)$};
\node (J2) at (2,-2) {$J(y_2)$};
\draw [->] (G1) to node [auto, swap] {$G(a)$} (G2);
\draw [->] (G2) to node [auto, swap] {$f_2$} (J2);
\draw [->] (G1) to node [auto] {$f_1$} (J1);
\draw [->] (J1) to node [auto] {$J(b)$} (J2);
\end{tikzpicture}
\end{aligned}
\end{equation}
\end{itemize}
This construction is essentially a weak form of the fibered product, where instead of taking pairs $(x,y)$ whose images in $\cat{B}$ agree, we choose a specific isomorphism between them.  We can unpack what this means for stuff types, for which $\cat A = \cat B = \cat C = \FSN$, in a concise way: an object in the weak pullback is a pair of models in $\cat X$ and $\cat Y$ equipped with a bijection of underlying sets; and a morphism is a pair of symmetries of models which induce the same bijections on the underlying sets.

\subsection{The categorified commutation relation}

The categorified form of the commutation relation~\eqref{eq:ccr} is an isomorphism of spans of the following form:
\begin{equation}
\label{eq:ccriso}
A \circ A ^\dag \simeq A ^\dag \circ A \oplus \id _\FSN
\end{equation}
The symbol `$\oplus$' represents the \emph{direct sum} of spans, which is formed from the disjoint union of groupoids of histories.

We begin with an intuitive argument for why this isomorphism should exist. We saw above that the histories of the span $A ^\dag$ represent all the ways to add an  element to a finite set, and the histories of $A$ represent all the ways to remove an element. The histories of $A \circ A ^\dag$ therefore represent all the ways to add an element $x$, and then remove an element $y$. Such histories can be divided into two distinct classes: those for which $x \neq y$, and those for which $x = y$. Restricting to the first case, one might as well remove $y$ before adding $x$, and so we have a bijection to the histories for the span $A ^\dag \circ A$; in the second case, the set remains unchanged, corresponding to the identity span. These two cases give the two terms on the right-hand side, and the explicit case-by-case bijection we have constructed gives the isomorphism of spans.

We now verify this intuition with direct calculation. The composite $A \circ A^{\dagger}$ is the following span:
\begin{equation}
\label{eq:p1p1span}
\begin{aligned}
\begin{tikzpicture}[xscale=2, yscale=1.5]
\node (T) at (0,0) {$(+1 \downarrow +1)$};
\node (L) at (-1,-1) {\FSN};
\node (R) at (1,-1) {\FSN};
\draw [->] (T) to node [auto, swap] {$P$} (L);
\draw [->] (T) to node [auto] {$Q$} (R);
\end{tikzpicture}
\end{aligned}
\end{equation}

\noindent
The groupoid \mbox{$(+1 \downarrow +1)$} has objects which are triples $(s,t,\alpha)$, where $s$ and $t$ are sets and $s+1 \sxto \alpha t+1$ is an isomorphism. The projection maps $P$ and $Q$ have the actions $P(s,t,\alpha) = s$ and $Q(s,t,\alpha) = t$ on objects.

A morphism $(s,t,\alpha) \to (s',t',\alpha')$ is a pair of isomorphisms $s \xrightarrow{\smash{\sigma}} s'$ and $t \xrightarrow{\smash{\tau}} t'$ such that $\alpha' \circ (\sigma+1) = (\tau+1) \circ \alpha$, with the projection maps $P$ and $Q$ taking this to $\sigma$ and $\tau$ respectively. We can visualize this condition in the following way:
\def\shift{8cm}
\def\redcolor{red!50}
\begin{equation}
\begin{aligned}
\begin{tikzpicture}[scale=0.5]
\node (a1) [blob] at (0,-0.5) {};
\node (b1) [blob] at (1.5,1) {};
\node (c1) [blob, \redcolor] at (2.5,2) {};
\node [rotate=-45] at (0.9,0.4) {$\vdots$};
\draw [rotate=-45, thick] (0.3,1.5) ellipse (0.8 and 2.5);
\begin{scope}[xshift=\shift]
\node (a2) [blob] at (0,-0.5) {};
\node (b2) [blob] at (1.5,1) {};
\node (c2) [blob, \redcolor] at (2.5,2) {};
\node [rotate=-45] at (0.9,0.4) {$\vdots$};
\draw [rotate=-45, thick] (0.3,1.5) ellipse (0.8 and 2.5);
\end{scope}
\begin{scope}[yshift=-\shift]
\node (a3) [blob] at (0,-0.5) {};
\node (b3) [blob] at (1.5,1) {};
\node (c3) [blob, \redcolor] at (2.5,2) {};
\node [rotate=-45] at (0.9,0.4) {$\vdots$};
\draw [rotate=-45, thick] (0.3,1.5) ellipse (0.8 and 2.5);
\end{scope}
\begin{scope}[yshift=-\shift,xshift=\shift]
\node (a4) [blob] at (0,-0.5) {};
\node (b4) [blob] at (1.5,1) {};
\node (c4) [blob, \redcolor] at (2.5,2) {};
\node [rotate=-45] at (0.9,0.4) {$\vdots$};
\draw [rotate=-45, thick] (0.3,1.5) ellipse (0.8 and 2.5);
\end{scope}
\draw (a1) to (a2);
\draw (b1) to (b2);
\draw (c1) to (c2);
\draw (a1) to (a3);
\draw (b1) to (b3);
\draw (c1) to (c3);
\draw (a4) to (a2);
\draw (b4) to (b2);
\draw (c4) to (c2);
\draw (a4) to (a3);
\draw (b4) to (b3);
\draw (c4) to (c3);
\node
    [minimum width=1.2cm, minimum height=1.8cm, draw, fill=white]
    at (5.25,0.75) {$\alpha$};
\node
    [minimum width=1.2cm, minimum height=1.8cm, draw, fill=white]
    at (5.25,-7.25) {$\alpha'$};
\node
    [minimum width=1.2cm, minimum height=1.3cm, draw, fill=white, rotate=-90]
    at (0.75,-3.25) {$\sigma$};
\node
    [minimum width=1.2cm, minimum height=1.3cm, draw, fill=white, rotate=-90]
    at (8.75,-3.25) {$\tau$};
\node [anchor=east] at (-0.2,1.0) {$s+1$};
\node [anchor=west] at (11.2,1.0) {$t+1$};
\node [anchor=east] at (-0.2,-7) {$s'+1$};
\node [anchor=west] at (11.2,-7) {$t'+1$};
\draw [->, thick] (3.5,-1.5)
    to [out=right, in=up] (7.0,-4.5);
\draw [->, thick] (3,-2)
    to [out=down, in=left] (6.5,-5);
\end{tikzpicture}
\end{aligned}
\end{equation}
The extra element in each set is drawn in red. We show explicitly in this diagram how the permutations $\alpha$, $\alpha'$, $\sigma$ and $\tau$ act on the different parts of each set. For the condition to be satisfied, the two composites from the top-left to the bottom-right must be equal.

With this diagram to aid us, we can draw some interesting conclusions. Firstly, for $(s,t,\alpha)$ and $(s',t',\alpha')$ to be isomorphic, then $\alpha$ must fix the extra element iff $\alpha'$ does --- and this is sufficient as long as $s \simeq s'$ and $t \simeq t'$. So for each isomorphism class of set $S$, there are exactly two isomorphism classes of object in its preimage under $P$ or $Q$ in $(+1 \downarrow +1)$. Secondly, if $\alpha$ and $\alpha'$ fix the extra element, then for every $\sigma$ there exists some $\tau$ making the diagram commute, so there is an $S_n$-worth of morphisms between any two such objects for $n=|S|$. Thirdly, if neither $\alpha$ nor $\alpha'$ fix the extra element, then a given $\tau$ can only be part of a commuting diagram if $\tau(\alpha(1))=\alpha'(1)$; in this case we can define $\alpha$ as the restriction to $S$ of the composite $\alpha' {}^{-1} \circ (\tau + 1) \circ \alpha$. This means that, in this case, there is a $S_{n-1}$ worth of morphisms between any two such objects for $n = |S|$. 

This gives us a complete understanding of the isomorphism classes of object in \mbox{$(+1 \downarrow +1)$} and their symmetries, which are the union of the entries in the following table:
\begin{equation}
\label{eq:table1}
\begin{aligned}
\begin{tikzpicture}
\node [align=right, font=\bf] at (0,0) {Extra element\\[-5pt]fixed};
\node [align=right, font =\bf] at (0,1) {Extra element\\[-5pt]not fixed};
\foreach\x in {0,1,2,3}
{
    \node at (2cm + \x cm,2) {\textbf{\x}};
    \node at (2cm + \x cm,0) {$S_{\x}$};
}
\foreach\x in {0,1,2}
    \node at (3cm + \x cm,1) {$S_{\x}$};
\node at (6cm,0) {$\cdots$};
\node at (6cm,1) {$\cdots$};
\node at (6cm,2) {$\cdots$};
\node at (8cm,0) {$\cdots$};
\node at (8cm,1) {$\cdots$};
\node at (8cm,2) {$\cdots$};
\node at (7cm,2) {$\boldsymbol n$};
\node at (7cm,1) {$S_{n-1}$};
\node at (7cm,0) {$S_{n}$};
\end{tikzpicture}
\end{aligned}
\end{equation}
The column headings indicate the cardinality of the set to which each object is projected, under the projection maps $P$ and $Q$. As a result, we see that the span~\eqref{eq:p1p1span} can be written in the following way:
\begin{equation}
\label{eq:span-AAs}
\begin{aligned}
\begin{tikzpicture}[xscale=2, yscale=1.5]
\node (T) at (0,0) {$\FSN \cup \FSN$};
\node (L) at (-1,-1) {\FSN};
\node (R) at (1,-1) {\FSN};
\draw [->] (T) to node [auto, swap] {$(\id _\FSN, +1) $} (L);
\draw [->] (T) to node [auto] {$(\id_\FSN, +1)$} (R);
\end{tikzpicture}
\end{aligned}
\end{equation}
Here $\FSN \cup \FSN$ represents the disjoint union of two copies of $\FSN$, and $(\id_\FSN, +1)$ represents the functor which acts as the identity on the first copy of $\FSN$ and as $+1$ on the second. We can consider this to be the union of spans $\big( \FSN \sxlto{\id_\FSN} \FSN \sxto{\id_\FSN} \FSN \big)$ and $\big( \FSN \sxlto{+1} \FSN \sxto{+1} \FSN \big)$, which gives us the isomorphism~\eqref{eq:ccriso} we are seeking.

\subsection{Forming a 2-category}
\label{sec:forming2cat}

So far our construction essentially follows that given in \cite{mstuff}, but we now need to go further and use the 2\-categorical structure of $\cat{Span(Gpd)}$.  This requires a notion of morphism between spans. For this, we define a \emph{span of spans} of type \mbox{$(\cat B \sxlto G \cat X \sxto F \cat A) \to (\cat B \sxlto J \cat Y \sxto K \cat A)$}  is a span \mbox{$\cat X \sxlto S \cat Z \sxto T \cat Y$} equipped with natural transformations $G \circ S \sxto \mu J \circ T$ and $F \circ S \sxto \nu K \circ T$, as indicated by the following diagram:
\begin{equation}
\label{eq:2spandiag}
\begin{aligned}
\begin{tikzpicture}[xscale=2, yscale=1.5]
\node (X) at (0,0) {$\cat X$};
\node (Y) at (0,-2) {$\cat Y$};
\node (Z) at (0,-1) {$\cat Z$};
\node (B) at (-1,-1) {$\cat B$};
\node (A) at (1,-1) {$\cat A$};
\draw [->] (X) to node [auto, swap] {$G$} (B);
\draw [->] (X) to node [auto] {$F$} (A);
\draw [->] (Y) to node [auto, swap] {$K$} (A);
\draw [->] (Y) to node [auto] {$J$} (B);
\draw [->] (Z) to node [auto, swap] {$S$} (X);
\draw [->] (Z) to node [auto] {$T$} (Y);
\draw [->, double] (0.4,-0.7) to
    node [right, pos=0.5] {$\nu$}
    (0.4,-1.3);
\draw [->, double] (-0.4,-0.7) to
    node [right, pos=0.5] {$\mu$}
    (-0.4,-1.3);
\end{tikzpicture}
\end{aligned}
\end{equation}
We say that two such spans of spans $(\cat X \sxlto S \cat Z \sxto T \cat Y, \mu, \nu)$ and $(\cat X \sxlto {S'} \cat {Z'} \sxto {T'} \cat Y, \mu', \nu')$ are \textit{equivalent} when there is an equivalence of groupoids $\cat Z \sxto{U} \cat{Z'}$, and natural transformations $S \sxto{\sigma} S' \circ U$ and $T \sxto{\tau} T' \circ U$, such that the following pasted composites give $\mu$ and $\nu$ respectively:
\begin{equation}
\label{eq:equivalencerelation}
\begin{aligned}
\begin{tikzpicture}[xscale=2, yscale=1.5]
\node (X) at (0,0) {$\cat X$};
\node (Y) at (0,-2) {$\cat Y$};
\node (Z) at (0.3,-1) {$\cat {Z}$};
\node (Z') at (-0.3,-1) {$\cat {Z'}$};
\node (A) at (-1.2,-1) {$\cat B$};
\draw [->] (X) to [out=180, in=60] node [auto, swap] {$G$} (A);
\draw [->] (Y) to [out=180, in=-60] node [auto] {$J$} (A);
\draw [->] (Z) to [out=90, in=-50] node [auto, swap, pos=0.2] {$S$} (X);
\draw [->] (Z) to [out=-90, in=50] node [auto, pos=0.2] {$T$} (Y);
\draw [->] (Z') to [out=90, in=-130] node [auto, pos=0.2, inner sep=2pt] {$S'$} (X);
\draw [->] (Z') to [out=-90, in=130] node [auto, swap, pos=0.2, inner sep=2pt] {$T'$} (Y);
\draw [->, double] (-0.7,-0.7) to
    node [right, pos=0.5] {$\mu'$}
    (-0.7,-1.3);
\draw [->, double] (0.15,-0.5) to
    node [auto, swap, pos=0.3, inner sep=1pt] {$\sigma$}
    (-0.2,-0.75);
\draw [->, double] (0.15,-1.5) to
    node [auto, pos=0.3, inner sep=2pt] {$\tau$}
    (-0.2,-1.25);
\draw [->] (Z) to node [auto, swap, inner sep=2pt] {$U$} (Z');
\end{tikzpicture}
\end{aligned}
\hspace{60pt}
\begin{aligned}
\begin{tikzpicture}[xscale=2, yscale=1.5]
\node (X) at (0,0) {$\cat X$};
\node (Y) at (0,-2) {$\cat Y$};
\node (Z) at (-0.3,-1) {$\cat Z$};
\node (Z') at (0.3,-1) {$\cat {Z'}$};
\node (A) at (1.2,-1) {$\cat A$};
\draw [->] (X) to [out=0, in=120] node [auto] {$F$} (A);
\draw [->] (Y) to [out=0, in=-120] node [auto, swap] {$K$} (A);
\draw [->] (Z) to [out=90, in=-130] node [auto, pos=0.2] {$S$} (X);
\draw [->] (Z) to [out=-90, in=130] node [auto, swap, pos=0.2] {$T$} (Y);
\draw [->] (Z') to [out=90, in=-50] node [auto, swap, pos=0.2] {$S'$} (X);
\draw [->] (Z') to [out=-90, in=50] node [auto, pos=0.2] {$T'$} (Y);
\draw [->, double] (0.7,-0.7) to
    node [right, pos=0.5] {$\nu'$}
    (0.7,-1.3);
\draw [->, double] (-0.15,-0.5) to
    node [auto, pos=0.4, inner sep=1pt] {$\sigma$}
    (0.2,-0.75);
\draw [->, double] (-0.15,-1.5) to
    node [auto, swap, pos=0.3, inner sep=2pt] {$\tau$}
    (0.2,-1.25);
\draw [->] (Z) to node [auto, inner sep=2pt] {$U$} (Z');
\end{tikzpicture}
\end{aligned}
\end{equation}

We use this to form a 2-category $\cat{Span(Gpd)}$ of spans of
groupoids. Since this has been described in some detail elsewhere
\cite{gpd2vect}, in the case of finite groupoids, we will avoid much
formal discussion of it here. However, we wish to use groupoids which
are discrete but not finite, and there is an extra assumption which
must be added for this to have a good linear representation theory. This is addressed in work
on groupoidification \cite{bhw-gpd} for the 1-category case, where it
is noted that groupoids and spans must be \textit{tame}.

A groupoid is \textit{tame} if it satisfies three properties. First, that it is
essentially small: that is, equivalent to one with a set of
objects, rather than a proper class. Second, that it is locally
finite: that is, all Hom-sets are finite. Third, its ``groupoid
cardinality'' should be finite: this is the sum over equivalence
classes of objects $x$ of $|\mathrm{Aut}(x)|^{-1}$. We will almost exclusively
use the groupoid of finite sets and bijections, which is indeed tame:
the objects are determined, up to isomorphism, by an integer $n$; the
automorphism groups are isomorphic to the permutation groups $\Sn$,
which are finite; the groupoid cardinality is therefore $\sum_n
\frac{1}{n!} = e$, which is finite.

A span is tame when the joint preimage of any pair of
objects in the source and target groupoid is tame, and also
for any object in the target groupoid, only finitely many choices of
object in the source groupoid gives a joint preimage which is
nonempty. Note that tameness of a span is an asymmetrical condition, so the opposite notion of cotame span is also interesting. These conditions of tameness and cotameness can be applied equally to spans of spans as
in our 2-categorical situation.

We use these definitions to build $\SiiG$ in the following way:
\begin{itemize}
\item \textbf{Objects} are tame groupoids.
\item \textbf{Morphisms} are spans of groupoids which are tame and cotame, with composition defined by weak pullback.
\item \textbf{2-Morphisms} are equivalence classes of spans of spans which are tame and cotame.
\end{itemize}

\noindent
This 2-category will be the formal setting for our results. The symmetric monoidal 2\-category structure of \cat{Span(Gpd)} has been described by Hoffnung and Stay~\cite{hoffnungspans,stay-compact}, although in the simpler situation where which the 2-morphisms are span maps rather than spans of spans. We conjecture that the monoidal 2-category structure extends to our case.

The 2-category \cat{Span(Gpd)} has strong duality properties. We have already defined a notion of converse for 1\-morphisms. In addition, for any 2\-morphism
\[
(\cat B \sxlto G \cat X \sxto F \cat A) \sxto {(\cat X \sxlto{S} \cat Z \sxto T \cat Y, \mu,\nu)} (\cat B \sxlto J \cat Y \sxto K \cat A),
\]
we can define its \textit{converse} $(\cat X \sxlto S \cat Z \sxto T \cat Y, \mu,\nu) ^\dag$ as the following span of spans:
\vspace{3pt}
\[
(\cat B \sxlto J \cat Y \sxto K \cat A)
\sxto{(\cat Y \sxlto T \cat Z \sxto S \cat X, \mu ^{-1}, \nu ^{-1} )}
(\cat B \sxlto G \cat X \sxto F \cat A).
\]

\subsection{The commutation relation}

Based on the calculations in the previous section, we can define the following spans of spans, called $i _{\id_\FSN}$ and $i _{A \circ A ^\dag}$:
\begin{equation}
\begin{aligned}
\begin{tikzpicture}[xscale=2, yscale=1.5]
\node (X) at (0,0) {$\FSN \cup \FSN$};
\node (Y) at (0,2) {$\FSN$};
\node (Z) at (0,1) {$\FSN$};
\node (B) at (-1,1) {$\FSN$};
\node (A) at (1,1) {$\FSN$};
\draw [->] (X) to node [auto] {$(\id _\FSN,+1)$} (B);
\draw [->] (X) to node [auto, swap] {$(\id _\FSN,+1)$} (A);
\draw [->] (Y) to node [auto] {$\id_\FSN$} (A);
\draw [->] (Y) to node [auto, swap] {$\id_\FSN$} (B);
\draw [->] (Z) to node [auto] {$I_1$} (X);
\draw [->] (Z) to node [auto, swap] {$\id_\FSN$} (Y);
\draw [<-, double] (0.4,0.7) to
    node [right, pos=0.5] {$\id$}
    (0.4,1.3);
\draw [<-, double] (-0.4,0.7) to
    node [right, pos=0.5] {$\id$}
    (-0.4,1.3);
\node at (0,-0.5) {$\id_\FSN \sxto{i_{\id_\FSN}} A \circ A ^\dag$};
\end{tikzpicture}
\end{aligned}
\hspace{30pt}
\begin{aligned}
\begin{tikzpicture}[xscale=2, yscale=1.5]
\node (X) at (0,0) {$\FSN \cup \FSN$};
\node (Y) at (0,2) {$\FSN$};
\node (Z) at (0,1) {$\FSN$};
\node (B) at (-1,1) {$\FSN$};
\node (A) at (1,1) {$\FSN$};
\draw [->] (X) to node [auto] {$(\id _\FSN,+1)$} (B);
\draw [->] (X) to node [auto, swap] {$(\id _\FSN,+1)$} (A);
\draw [->] (Y) to node [auto] {$+1$} (A);
\draw [->] (Y) to node [auto, swap] {$+1$} (B);
\draw [->] (Z) to node [auto] {$I_2$} (X);
\draw [->] (Z) to node [auto, swap] {$\id _\FSN$} (Y);
\draw [<-, double] (0.4,0.7) to
    node [right, pos=0.5] {$\id$}
    (0.4,1.3);
\draw [<-, double] (-0.4,0.7) to
    node [right, pos=0.5] {$\id$}
    (-0.4,1.3);
\node at (0,-0.5) {$A^\dag \circ A \sxto{i_{A^\dag \circ A}} A \circ A ^\dag$};
\end{tikzpicture}
\end{aligned}
\end{equation}
Here, $I_1$ and $I_2$ are functors embedding the first and second copy of $\FSN$ respectively.

We look more closely at the definition of $i _{\id _\FSN}$, looking `inside' the groupoids for the case of histories acting on the 2-element set:
\begin{equation}\label{diag:i-id-s}
\begin{aligned}
\includegraphics{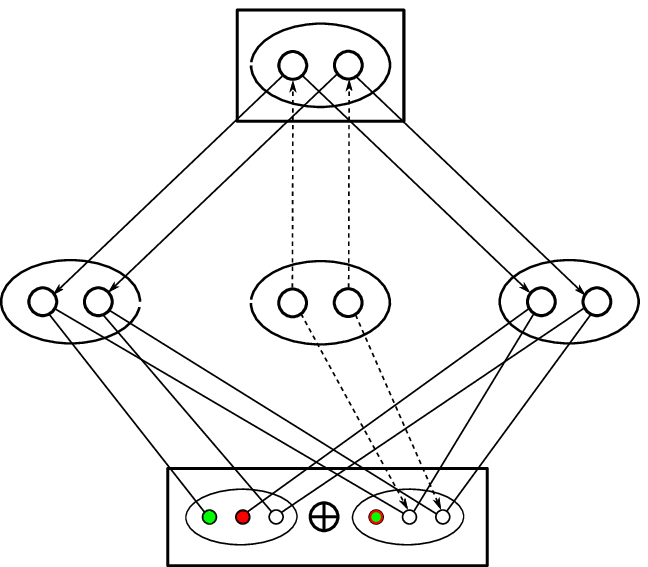}
\end{aligned}
\end{equation}
Across the top of this picture, we see one object of the middle $\FSN$, namely the two-element set.  The identity span maps this object down to the two-element set on either side.  On the bottom, though, we see the two objects in $(+1 \downarrow +1)$ which map down to two-element sets.  Each is a 3-element set, but as in the table (\ref{eq:table1}), we see that the marked elements --- the element added and the element removed --- are either the same or different.  The symmetry groups noted in \eqref{eq:table1} are those which permute the unmarked elements.  The span of spans shows how objects in the identity relate to particular objects in $(+1 \downarrow +1)$.  This determines an inclusion.

We use these ideas to demonstrate that the categorified commutation relation holds.
\begin{lemma}
\label{lem:ccrbiproduct}
There is an isomorphism of spans of groupoids
\begin{equation}
\label{eq:formalcommutationrelation}
A \circ A ^\dag \simeq (A ^\dag \circ A) \oplus \id _\FSN,
\end{equation}
where $\oplus$ represents the $\dag$-biproduct of spans of groupoids.
\end{lemma}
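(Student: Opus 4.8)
The plan is to read the isomorphism off the computation of the previous subsection, and then to upgrade the resulting direct-sum decomposition to a $\dag$-biproduct using the spans of spans $i_{\id_\FSN}$ and $i_{A^\dag \circ A}$ introduced above. The object- and morphism-level analysis summarised in the table~\eqref{eq:table1} has already shown that the middle groupoid $(+1 \downarrow +1)$ of the composite $A \circ A^\dag$ splits, according to whether the marked element is fixed, into two full subgroupoids each equivalent to $\FSN$; this is exactly the rewriting~\eqref{eq:span-AAs} of $A \circ A^\dag$ as a span out of $\FSN \cup \FSN$.

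First I would identify the two summands with the two terms on the right-hand side. Restricting~\eqref{eq:span-AAs} to the ``extra element fixed'' copy gives the span $\FSN \sxlto{\id_\FSN} \FSN \sxto{\id_\FSN} \FSN$, which is precisely $\id_\FSN$. Restricting to the ``extra element not fixed'' copy gives $\FSN \sxlto{+1} \FSN \sxto{+1} \FSN$. To see that this second span is $A^\dag \circ A$, I would compute that composite as a separate weak pullback: with $A = (\FSN \sxlto{\id} \FSN \sxto{+1} \FSN)$ and $A^\dag = (\FSN \sxlto{+1} \FSN \sxto{\id} \FSN)$, the relevant pullback $(\id \downarrow \id)$ has objects the triples $(s,t,f)$ with $f \colon s \to t$ an isomorphism, a groupoid equivalent to $\FSN$, and its two outer legs send such a triple to $s+1$ and $t+1$. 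Hence $A^\dag \circ A \simeq (\FSN \sxlto{+1} \FSN \sxto{+1} \FSN)$, and the two identifications together give the underlying equivalence $A \circ A^\dag \simeq (A^\dag \circ A) \oplus \id_\FSN$.

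It remains to promote this equivalence to a $\dag$-biproduct. Since the middle groupoid of $A \circ A^\dag$ is a disjoint union, and the direct sum of spans is by definition built from such a disjoint union, the two components come with canonical full embeddings; these are encoded by the spans of spans $i_{\id_\FSN}$ and $i_{A^\dag \circ A}$ defined above, and in the self-dual 2-category $\cat{Span(Gpd)}$ their $\dag$-converses furnish the matching projections. The $\dag$-biproduct axioms then reduce to 2-morphism identities: the isometry relations $i_{\id_\FSN}^\dag \circ i_{\id_\FSN} \simeq \id$ and $i_{A^\dag \circ A}^\dag \circ i_{A^\dag \circ A} \simeq \id$, the orthogonality relations stating that a projection followed by the injection of the other summand is the zero span of spans, and the completeness relation stating that the two round-trips $i_k \circ i_k^\dag$ jointly recover the identity on $A \circ A^\dag$.

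The main obstacle is verifying these 2-morphism identities, which requires composing spans of spans by weak pullback and comparing up to the equivalence relation~\eqref{eq:equivalencerelation}. The orthogonality relations should be immediate: the relevant pullback is taken over two disjoint full subgroupoids of $\FSN \cup \FSN$, so its groupoid of histories is empty and the composite is the zero span of spans. The isometry relations are the substantive point, and here the bookkeeping of automorphism groups recorded in~\eqref{eq:table1} is exactly what is needed: the ``fixed'' component over a cardinality-$n$ set carries the symmetry group $S_n$, matching the automorphisms in $\id_\FSN$, while the ``not fixed'' component carries $S_{n-1}$, matching the automorphisms of the middle groupoid of $A^\dag \circ A$ over the same cardinality. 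Thus each embedding is a full and faithful equivalence onto its component, introducing no multiplicities, and the round-trips collapse to identities; this is precisely the combinatorial content illustrated in~\eqref{diag:i-id-s}, where the identity groupoid is exhibited inside $(+1 \downarrow +1)$ as the fixed component with matching symmetries. Completeness then follows because every object of $\FSN \cup \FSN$ lies in exactly one of the two components, so the two embeddings are jointly surjective on isomorphism classes.
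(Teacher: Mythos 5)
Your proposal is correct and takes essentially the same route as the paper: the same decomposition of $(+1 \downarrow +1)$ into the ``extra element fixed'' and ``not fixed'' components summarised in~\eqref{eq:table1}, the same identification of these with $\id_\FSN$ and $A^\dag \circ A$, and the same promotion to a $\dag$-biproduct witnessed by $i_{\id_\FSN}$, $i_{A^\dag \circ A}$ and their converses satisfying the five equations \eqref{eq:biproduct1}--\eqref{eq:biproduct5}. If anything you supply more detail than the paper itself, whose proof merely states those five equations and defers their correctness to the combinatorial interpretation developed afterwards, whereas you verify the orthogonality, isometry and completeness identities directly by weak-pullback computations on the disjoint components (and you also make explicit the computation $A^\dag \circ A \simeq (\FSN \sxlto{+1} \FSN \sxto{+1} \FSN)$, which the paper leaves implicit).
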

\begin{proof}
The $\dagger$-biproduct is witnessed by the following equations involving the injection 2\-morphisms $i _{\id _\FSN}$, $i _{A ^\dag  \circ A}$ and their converses:
\begin{align}
\label{eq:biproduct1}
\id _{\id _\FSN}
&=
(i _{\id _\FSN}) ^\dag \circ i _{\id _\FSN}
\\
\label{eq:biproduct2}
0 _{A^\dag \circ A, \id _\FSN}
&=
(i _{\id _\FSN}) ^\dag \circ i _{A ^\dag \circ A}
\\
\label{eq:biproduct3}
0 _{\id_\FSN, A ^\dag \circ A}
&=
(i _{A ^\dag \circ A}) ^\dag \circ i _{\id _\FSN}
\\
\label{eq:biproduct4}
\id _{A ^\dag \circ A}
&=
(i _{A ^\dag \circ A})  ^\dag \circ i _{A ^\dag \circ A}
\\
\label{eq:biproduct5}
\id _{A \circ A ^\dag}
&=
i _{\id _\FSN} \circ (i _{\id _\FSN}) ^\dag + i _{A^\dag \circ A} \circ (i _{A ^\dag \circ A}) ^\dag
\end{align}
We will see that these injection and projection maps, which characterize the $\dagger$-biproduct, are related to the adjointness of the spans $A$ and $A^{\dagger}$. Correctness of these equations follows from the combinatorial interpretations of these spans of spans, which we develop below.
\end{proof}

\subsection{Graphical notation}

We now introduce a graphical notation for certain spans of type $\FSN \sxlto{} \cat X \sxto{} \FSN$, and for the 2\-morphisms going between them in \cat{Span(Gpd)}. This is the notation of Khovanov~\cite{khovanov}, and is an application of the standard graphical calculus for morphisms in a monoidal category~\cite{selinger-diagrams}. Creation operators $A^\dag$ and annihilation operators $A$ are represented as vertical lines with upwards and downwards orientation, respectively:
\begin{equation}
\begin{aligned}
\begin{tikzpicture}
\draw [arrow=0.5] (0,0) node [below] {$A^\dag$} to (0,1);
\end{tikzpicture}
\end{aligned}
\hspace{100pt}
\begin{aligned}
\begin{tikzpicture}
\draw [reverse arrow=0.5] (0,0) node [below] {$A$} to (0,1);
\end{tikzpicture}
\end{aligned}
\end{equation}
Composition of operators is represented by horizontal juxtaposition. The identity span $\FSN \sxlto{\id _\FSN} \FSN \sxto{\id _\FSN} \FSN$ is represented by the empty diagram.

We denote 2-morphisms with string diagrams. In particular, our 2-morphisms
$i _{\id _\FSN}$, $(i _{\id _\FSN}) ^\dag$, $i _{A ^\dag \circ A}$ and $(i _{A ^\dag \circ A}) ^\dag$ have the following representations:
\renewcommand{\centerdia}[1]{\makebox[2cm]{\ensuremath{#1}}}
\begin{align}
\label{eq:basiccomponents}
\centerdia{\begin{aligned}
\begin{tikzpicture}
\draw [white] (0,0) to (0,-1.5);
\draw (0,0) to (0,-0.2) to [out=down, in=down, looseness=2] (1,-0.2) to (1,0);
\draw [<-] (0,-0.2) to (0,0);
\draw [-<] (1,0) to (1,-0.2);
\end{tikzpicture}
\end{aligned}}
&&
\centerdia{\begin{aligned}
\begin{tikzpicture}
\draw [white] (0,-0.2) to (0,1.3);
\draw (0,-0.2) to (0,0) to [out=up, in=up, looseness=2] (1,0) to (1,-0.2);
\draw [-<] (0,-0.2) to (0,0);
\draw [<-] (1,0) to (1,-0.2);
\end{tikzpicture}
\end{aligned}}
&&
\centerdia{\begin{aligned}
\begin{tikzpicture}
\draw [arrow=0.1, arrow=0.9] (0,0) to [out=up, in=down] (1,1.5);
\draw [reverse arrow=0.1, reverse arrow=0.9] (1,0) to [out=up, in=down] (0,1.5);
\end{tikzpicture}
\end{aligned}}
&&
\centerdia{\begin{aligned}
\begin{tikzpicture}
\draw [reverse arrow=0.1, reverse arrow=0.9] (0,0) to [out=up, in=down] (1,1.5);
\draw [arrow=0.1, arrow=0.9] (1,0) to [out=up, in=down] (0,1.5);
\end{tikzpicture}
\end{aligned}}
\\
\nonumber
\centerdia{i _{\id _\FSN}}
&&
\centerdia{(i _{\id _\FSN}) ^\dag}
&&
\centerdia{i _{A ^\dag \circ A}}
&&
\centerdia{(i _{A ^\dag \circ A}) ^\dag}
\end{align}
Using this notation, the biproduct equations~(\ref{eq:biproduct1}\_\ref{eq:biproduct5}) have the following representation:
\noindent\begin{tabular*}{\textwidth}{@{\extracolsep{\fill}}E*{3}{>{$}c<{$}}E}
&
\begin{aligned}
\begin{tikzpicture}
\draw [reverse arrow=0, reverse arrow=0.5] (0,0)
    to [out=up, in=up, looseness=2] (1,0)
    to [out=down, in=down, looseness=2] (0,0);
\end{tikzpicture}
\end{aligned}
\,\,=\,\,
\id _{\id _\FSN}
&
\begin{aligned}
\begin{tikzpicture}
\draw [white] (0,-0.2) to (1,-0.2);
\draw [arrow=0.05, arrow=0.37, arrow=0.64, arrow=0.96] (0,0)
    to [out=up, in=down] (1,1.5)
    to [out=up, in=up, looseness=2] (0,1.5)
    to [out=down, in=up] (1,0);
\end{tikzpicture}
\end{aligned}
\,\,=\,\,
0 _{A^\dag \circ A, \id _\FSN}
&
\begin{aligned}
\begin{tikzpicture}
\draw [white] (0,0.2) to (1,0.2);
\draw [reverse arrow=0.05, reverse arrow=0.37, reverse arrow=0.64, reverse arrow=0.96] (0,0)
    to [out=down, in=up] (1,-1.5)
    to [out=down, in=down, looseness=2] (0,-1.5)
    to [out=up, in=down] (1,0);
\end{tikzpicture}
\end{aligned}
\,\,=\,\,
0 _{\id_\FSN, A ^\dag \circ A}
&
\\[-6pt]
& \text{\eqref{eq:biproduct1}} & \text{\eqref{eq:biproduct2}} & \text{\eqref{eq:biproduct3}} &
\end{tabular*}

\vspace{10pt}
\noindent\begin{tabular*}{\textwidth}{@{\extracolsep{\fill}}E*{2}{>{$}c<{$}}E}
&
\begin{aligned}
\begin{tikzpicture}
\draw [white] (0,0.2) to (1,0.2);
\draw [arrow=0.05, arrow=0.5, arrow=0.95] (0,0)
    to [out=up, in=down] (1,1.5) to [out=up, in=down] (0,3);
\draw [reverse arrow=0.05, reverse arrow=0.5, reverse arrow=0.95] (1,0)
    to [out=up, in=down] (0,1.5) to [out=up, in=down] (1,3);
\end{tikzpicture}
\end{aligned}
\,\,=\,\,
\begin{aligned}
\begin{tikzpicture}
\draw [white] (0,0.2) to (1,0.2);
\draw [arrow=0.5] (0,0)to (0,3);
\draw [reverse arrow=0.5] (1,0) to (1,3);
\end{tikzpicture}
\end{aligned}
&
\begin{aligned}
\begin{tikzpicture}
\draw (0,-0.2) to [out=down, in=down, looseness=2] (1,-0.2);
\draw (0,-2.8) to [out=up, in=up, looseness=2] (1,-2.8);
\draw [->] (0,0) to (0,-0.2);
\draw [-<] (1,0) to (1,-0.2);
\draw [-<] (0,-3) to (0,-2.8);
\draw [->] (1,-3) to (1,-2.8);
\end{tikzpicture}
\end{aligned}
\,\,+\,\,
\begin{aligned}
\begin{tikzpicture}
\draw [white] (0,0.2) to (1,0.2);
\draw [reverse arrow=0.05, reverse arrow=0.5, reverse arrow=0.95] (0,0)
    to [out=up, in=down] (1,1.5) to [out=up, in=down] (0,3);
\draw [arrow=0.05, arrow=0.5, arrow=0.95] (1,0)
    to [out=up, in=down] (0,1.5) to [out=up, in=down] (1,3);
\end{tikzpicture}
\end{aligned}
\,\,=\,\,
\begin{aligned}
\begin{tikzpicture}
\draw [white] (0,0.2) to (1,0.2);
\draw [reverse arrow=0.5] (0,0)to (0,3);
\draw [arrow=0.5] (1,0) to (1,3);
\end{tikzpicture}
\end{aligned}
&
\\[4pt]
& \text{\eqref{eq:biproduct4}} & \text{\eqref{eq:biproduct5}} &
\\[5pt]
\end{tabular*}
Khovanov's graphical axioms for the categorified Heisenberg algebra include equations~\eqref{eq:biproduct1}, \eqref{eq:biproduct4} and \eqref{eq:biproduct5}. Equations~\eqref{eq:biproduct2} and \eqref{eq:biproduct3} are not explicitly part of his algebra, but they can be straightforwardly derived from the other three in a setting where hom-set addition distributes over composition and addition is cancellable, properties which hold in both \cat{Span(Gpd)} and also in Khovanov's bimodule category setting.

Although the graphical representations of $i _{A ^\dag \circ A}$ and its converse look like braiding of a braided monoidal category, we do not have such a structure here, as witnessed explicitly by equation~\eqref{eq:biproduct5}: the `braidings' are not invertible. It even fails to be a lax braiding, as naturality fails to hold. We will see in Section~\ref{sec:notnatural} how to deduce this from a theorem of Yetter~\cite{yetter-qg}.

\subsection{Combinatorial interpretation}
\label{sec:interpretation}

We can extend our combinatorial interpretation to the morphisms $i _{\id _\FSN}$, $(i _{\id _\FSN} ) ^\dag$, $i_{A ^\dag \circ A}$ and $(i _{A ^\dag \circ A} ) ^\dag$ as depicted in expression~\eqref{eq:basiccomponents}. This will allow us to see intuitively why equations~(\ref{eq:biproduct1}\_\ref{eq:biproduct5}) should hold.

A span of spans gives a way to relate one history to another, in such a way that related histories have isomorphic sources and isomorphic targets. We can therefore understand our spans of spans by listing the histories which they relate. This does not completely define a span of spans, as the actions on the symmetry groups of objects must also be taken into account, but it is nevertheless a useful way to develop intuition.
\begin{itemize}
\item $\id _\FSN \sxto {i _{\id _\FSN}} A \circ A^\dag$. This relates a history where no change is made to the underlying set, to a history in which some element is added and then removed.
\item $A \circ A ^\dag \sxto {(i _{\id _\FSN} )^\dag} \id _\FSN$. Histories in $A \circ A ^\dag$ representing adding and removing the same element are related to the trivial history representing no action. Histories representing adding one element and removing a different element are related to nothing.
\item $A ^\dag \circ A \sxto {i _{A ^\dag \circ A}} A \circ A ^\dag$. Histories where $x$ is removed and then $y$ is added are related to histories where $y$ is added and then $x$ is removed.
\item $A \circ A ^\dag \sxto{(i _{A ^\dag \circ A})^\dag} A^\dag \circ A$. Given a history where $y$ is added and $x$ is removed, then if $x \neq y$ this is related to the history where $x$ is removed and then $y$ is added; otherwise it is related to nothing.
\end{itemize}
This intuition allows us to understand why equations~(\ref{eq:biproduct1}\_\ref{eq:biproduct5}) should hold. filling out the proof of Lemma~\ref{lem:ccrbiproduct}.

\begin{enumerate}[\hspace{10pt}(1)]
\setcounter{enumi}{25}
\addtocounter{enumi}{-1}
\item If we begin with the trivial action on a set, and then pass to a history where we add and remove the same element, and then verify  that indeed the same element has been added as has been removed, then this leaves our initial history unchanged.
\item Beginning with a history where we remove $x$ and add $y$, we then ensure that $x \neq y$ and pass to a history where we add $y$ and then remove $x$. We then ensure that $x$ and $y$ are the same, which is clearly impossible, so our original history is related to nothing.
\item We begin by choosing a history where we add and remove the same element. We then reverse the order of these operations, which is clearly impossible, so our original history is related to nothing.
\item We begin with a history where we remove $x$ and add $y$. This is related to a history where we add $y$ and then remove $x$, which in turn is related to a history where we remove $x$ and add $y$. This clearly gives the identity on histories.
\item The left-hand side of this equation is comprised of two separate relations on histories. Suppose we begin with a history for which we add $x$ and then remove $y$. The first summand selects the case that $x=y$, and relates it to itself. The second ensures that $x \neq y$ and relates the initial history to the case that we remove $y$ and then add $x$, which in turn is related to the history where we add $x$ and then remove $y$; so this relates every history to itself, except for the case that $x=y$. Overall the sum of these relations give the identity on histories, and so the equation is satisfied.
\end{enumerate}

\subsection{Adjunction of $A$ and $A^{\dagger}$}

The 2-category $\SiiG$ is useful because it contains every object and 1\-cell of $\Gpd$, with the additional property that every 1\-cell has a two-sided adjoint~\cite{spanconstruction}. In particular, for every span, its converse is both a left and a right adjoint. This situation is  called an \textit{ambidextrous adjunction}, or just \textit{ambiadjunction} for short.

In particular, suppose we have a 1\-morphism $\mathbf{A} \sxto{F} \mathbf{B}$ in
$\SiiG$, which is written as the following span:
\begin{equation}
\begin{aligned}
\begin{tikzpicture}[xscale=2, yscale=1.5]
\node (Y) at (0,2) {$\bf X$};
\node (B) at (-1,1) {$\bf B$};
\node (A) at (1,1) {$\bf A$};
\draw [->] (Y) to node [auto, swap] {$G$} (B);
\draw [->] (Y) to node [auto] {$F$} (A);
\end{tikzpicture}
\end{aligned}
\end{equation}
Then its ambiadjoint morphism $\mathbf{B} \sxto{F ^\dag} \mathbf{A}$ is given by the converse span:
\begin{equation}
\begin{aligned}
\begin{tikzpicture}[xscale=2, yscale=1.5]
\node (Y) at (0,2) {$\bf X$};
\node (B) at (-1,1) {$\bf A$};
\node (A) at (1,1) {$\bf B$};
\draw [->] (Y) to node [auto, swap] {$F$} (B);
\draw [->] (Y) to node [auto] {$G$} (A);
\end{tikzpicture}
\end{aligned}
\end{equation}
To fully specify the ambiadjunction we need four 2-morphisms
\begin{align}
\label{eq:etal}
\eta_L & : \id_{A} \Rightarrow F \circ F^{\dagger}
\\ 
\eta_R & : \id_{B} \Rightarrow F^{\dagger} \circ F
\\
\epsilon_L & :  F^{\dagger} \circ F \Rightarrow \id_{B}
\\
\label{eq:epsr}
\epsilon_R & :  F \circ F^{\dagger} \Rightarrow \id_{A}
\end{align}
satisfying the appropriate adjunction equations:
\begin{align}\label{eq:adjproperty}
(\epsilon_R \circ \id_F) \cdot (\id_F \circ \eta_R) &= \id_F
\\
(\id_{F^\dag} \circ \epsilon_R) \cdot (\eta_R \circ \id _{F ^\dag}) &= \id_{F ^\dag}
\\
(\epsilon_L \circ \id_{F^\dag}) \cdot (\id_{F^\dag} \circ \eta_L) &= \id_{F^\dag}
\\
(\id_{F} \circ \epsilon_L) \cdot (\eta_L \circ \id _{F}) &= \id_{F}
\end{align}

We construct these 2-morphisms in the following way. First, $\eta_L$ is formed as follows:
\begin{equation}
\label{eq:etaLdef}
\eta_L \,\,:=
\begin{aligned}
\begin{tikzpicture}[xscale=2, yscale=1.5]
\node (Y) at (0,2) {$\bf A$};
\node (B) at (-1,1) {$\bf A$};
\node (A) at (1,1) {$\bf A$};
\node (X) at (0,1) {$\bf X$};
\node (C) at (0,0) {$(F \downarrow F)$};
\draw [double] (Y) to node [auto, swap] {} (B);
\draw [double] (Y) to node [auto] {} (A);
\draw [->] (X) to node [auto, swap] {$F$} (Y);
\draw [->] (X) to node [auto, swap] {$F$} (B);
\draw [->] (X) to node [auto] {$F$} (A);
\draw [->] (X) to node [auto] {$\Delta_F$} (C);
\draw [->] (C) to node [auto] {$F \circ \pi_1$} (B);
\draw [->] (C) to node [auto, swap] {$F \circ \pi_2$} (A);
\end{tikzpicture}
\end{aligned}
\end{equation}
Here $(G \downarrow G)$ is the iso-comma category given by the weak
pullback of $G$ along itself. Its objects are given by triples
$(x_1,\alpha,x_2)$ such that $\alpha : G(x_1) \ra G(x_2)$, and the morphisms are
compatible pairs of morphisms. Then the diagonal map $\Delta_F$ is the
map into this groupoid from $X$ which on objects maps $x \mapsto
(x,id_{F(x)},x)$ and on morphisms maps $g \mapsto (g,g)$. We form $\eta_R$ in a similar way:
\begin{equation}
\label{eq:unitspan-R}\eta_R \,\,:=
\begin{aligned}
\begin{tikzpicture}[xscale=2, yscale=1.5]
\node (Y) at (0,2) {$\bf B$};
\node (B) at (-1,1) {$\bf B$};
\node (A) at (1,1) {$\bf B$};
\node (X) at (0,1) {$\bf X$};
\node (C) at (0,0) {$(G \downarrow G)$};
\draw [double] (Y) to node [auto, swap] {} (B);
\draw [double] (Y) to node [auto] {} (A);
\draw [->] (X) to node [auto, swap] {$G$} (Y);
\draw [->] (X) to node [auto, swap] {$G$} (B);
\draw [->] (X) to node [auto] {$G$} (A);
\draw [->] (X) to node [auto] {$\Delta_G$} (C);
\draw [->] (C) to node [auto] {$G \circ \pi_1$} (B);
\draw [->] (C) to node [auto, swap] {$G \circ \pi_2$} (A);
\end{tikzpicture}
\end{aligned}
\end{equation}
The counit 2-cells are then formed as the converses of these:
\begin{align}
\label{eq:counitspan-L}
  \epsilon_L &:= \eta_R {}^{\ast}
\\
\label{eq:epsRdef}
  \epsilon_R &:= \eta_L {}^{\ast}
\end{align}

\begin{lemma}
  The 2-cells (\ref{eq:etaLdef})-(\ref{eq:epsRdef}) are the
  units and counits of an ambidextrous adjunction.
\end{lemma}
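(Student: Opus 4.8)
The goal is to verify the four adjunction equations \eqref{eq:adjproperty} for the explicit 2-cells $\eta_L$, $\eta_R$, $\epsilon_L = \eta_R{}^{\ast}$ and $\epsilon_R = \eta_L{}^{\ast}$ defined in \eqref{eq:etaLdef}--\eqref{eq:epsRdef}. The \emph{existence} of a two-sided adjoint for every span is already known in general \cite{spanconstruction}; the content of the lemma is that the particular diagonal-into-iso-comma cells written above constitute a valid choice of units and counits. I would first exploit two symmetries to cut down the work. The construction of $\eta_R$ from the leg $G$ of the span is formally identical to that of $\eta_L$ from the leg $F$, so interchanging the two legs $F \leftrightarrow G$ (equivalently $\mathbf{A} \leftrightarrow \mathbf{B}$) carries the $\eta_L$-data to the $\eta_R$-data and pairs the triangle identities accordingly; and applying the converse $(-)^{\ast}$ to an entire equation, which reverses vertical composites and sends $\eta_R \mapsto \epsilon_L$, $\epsilon_R \mapsto \eta_L$, trades the two triangle identities of a single adjunction for one another. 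Together these reduce the four equations to a single verification, say $(\epsilon_R \circ \id_F) \cdot (\id_F \circ \eta_R) = \id_F$.

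For that verification I would unfold the two whiskered 2-cells $\id_F \circ \eta_R$ and $\epsilon_R \circ \id_F$ as spans of spans, and then form their vertical composite as the weak pullback of apex groupoids over the apex of the common middle 1-cell $F \circ F^{\dagger} \circ F$. The structural fact that drives the argument is that the apex of each unit is an iso-comma groupoid --- $(G \downarrow G)$ for $\eta_R$ and $(F \downarrow F)$ for $\epsilon_R = \eta_L{}^{\ast}$ --- into which the data of $\mathbf{X}$ is inserted along the diagonal $\Delta_G$, respectively $\Delta_F$. In the iterated weak pullback, the image of the diagonal forces one of the comma-isomorphisms in a datum $\big(x_1,\,\alpha\colon G(x_1)\ra G(x_2),\,x_2\big)$ to be the identity up to canonical isomorphism, which collapses that iso-comma datum back to a single underlying object of $\mathbf{X}$.

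I would then make this precise by exhibiting an explicit equivalence of groupoids between the apex of the composite span of spans and $\mathbf{X}$ itself, which is the apex of the identity 2-cell $\id_F$, sending each collapsed datum to its underlying object of $\mathbf{X}$. It then remains to check that under this equivalence the two legs $S$, $T$ and the structure 2-cells $\mu$, $\nu$ of the composite match those of $\id_F$ --- whose apex is $\mathbf{X}$ with $S = T = \id_{\mathbf{X}}$ and identity structure cells. Once this is done, the equivalence relation on spans of spans recorded in \eqref{eq:equivalencerelation} identifies the composite with $\id_F$ as a 2-morphism of $\SiiG$, which is the required equation; the three remaining identities then follow by the symmetries above.

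The main obstacle is not the shape of the argument --- which is the standard observation that the diagonal retracts the iso-comma category, producing the zigzag --- but the coherence bookkeeping. Because every pullback here is \emph{weak}, the universal properties hold only up to natural isomorphism, and one must check that the mediating isomorphisms assemble so that the contracting functor is simultaneously compatible with $S$, $T$, $\mu$ and $\nu$, rather than being a mere equivalence of apex groupoids. This is exactly the coherence that makes it necessary to define the 2-cells of $\SiiG$ as equivalence classes of spans of spans, and tracking it carefully --- while also confirming that each composite remains tame and cotame, so that it is a legitimate 2-morphism --- is where the genuine care is needed.
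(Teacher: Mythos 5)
Your proposal is correct and follows essentially the same route as the paper: reduce the four triangle identities to a single one (the paper simply notes the proofs are ``all essentially the same,'' while you make the reduction explicit via the leg-swap and converse symmetries), then verify that one identity by composing the whiskered unit and counit via weak pullback and using the special property of the diagonal into the iso-comma groupoid --- namely that $\pi_1 \circ \Delta_G = \pi_2 \circ \Delta_G$ and $\alpha \circ \Delta_G$ is the identity --- to collapse the composite down to $\id_F$. Your closing remarks on coherence bookkeeping and tameness are reasonable added care, but they do not change the substance of the argument, which is exactly the paper's.
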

\begin{proof}

  It is obvious that these 2-morphisms have the correct source and
  target. To see that they satisfy (\ref{eq:adjproperty}), there are
  four properties to check (two for each adjunction), but the proofs
  are all essentially the same.  Consider the identity
  \begin{equation}
    ( \id \circ \eta_R ) \cdot ( \epsilon_R \circ \id ) = \id 
  \end{equation}

The left hand side is given by the following diagram, representing the composite of 2-morphisms:
\begin{equation}
\label{eq:adjunction-composite}
\begin{aligned}
\begin{tikzpicture}[yscale=-1, scale=1.5]
\node (1) at (0,0) {$\bf B$};
\node (2) at (1,0) {$\bf X$};
\node (3) at (2,0) {$\bf A$};
\node (4) at (4,0) {$\bf A$};
\node (5) at (6,0) {$\bf A$};
\node (6) at (0,2) {$\bf B$};
\node (7) at (1,2) {$\bf X$};
\node (8) at (2,2) {$\bf A$};
\node (9) at (4,1) {$\bf X$};
\node (10) at (4,2) {$(G \downarrow G)$};
\node (11) at (6,2) {$\bf B$};
\node (12) at (0,3) {$\bf B$};
\node (13) at (1,3) {$\bf X$};
\node (14) at (2,3) {$\bf A$};
\node (15) at (3,3) {$\bf X$};
\node (16) at (4,3) {$\bf B$};
\node (17) at (5,3) {$\bf X$};
\node (18) at (6,3) {$\bf A$};
\node (19) at (0,4) {$\bf B$};
\node (20) at (2,4) {$(F \downarrow F)$};
\node (21) at (4,4) {$\bf B$};
\node (22) at (5,4) {$\bf X$};
\node (23) at (6,4) {$\bf A$};
\node (24) at (2,5) {$\bf X$};
\node (25) at (0,6) {$\bf B$};
\node (26) at (2,6) {$\bf B$};
\node (27) at (4,6) {$\bf B$};
\node (28) at (5,6) {$\bf X$};
\node (29) at (6,6) {$\bf A$};
\draw [->] (2) to node [auto, swap] {$G$} (1);
\draw [->] (2) to node [auto] {$F$} (3);
\draw [->] (9) to node [auto, swap] {$F$} (4);
\draw [->] (9) to node [auto] {$\Delta_G$} (10);
\draw [->] (7) to node [auto, swap] {$G$} (6);
\draw [->] (7) to node [auto] {$F$} (8);
\draw [->] (10) to node [auto, swap] {$F \circ \pi_1$} (8);
\draw [->] (10) to node [auto] {$G \circ \pi_2$} (11);
\draw [->] (10) to node [auto, swap] {$\pi_1$} (15);
\draw [->] (10) to node [auto] {$\pi_2$} (17);
\draw [->] (13) to node [auto, swap] {$G$} (12);
\draw [->] (13) to node [auto] {$F$} (14);
\draw [->] (15) to node [auto, swap] {$F$} (14);
\draw [->] (15) to node [auto] {$G$} (16);
\draw [->] (17) to node [auto, swap] {$G$} (16);
\draw [->] (17) to node [auto] {$F$} (18);
\draw [->] (20) to node [auto] {$\pi_1$} (13);
\draw [->] (20) to node [auto, swap] {$\pi_2$} (15);
\draw [->] (20) to node [auto] {$G \circ \pi_1$} (19);
\draw [->] (20) to node [auto, swap] {$F \circ \pi_2$} (21);
\draw [->] (22) to node [auto, swap] {$G$} (21);
\draw [->] (22) to node [auto] {$F$} (23);
\draw [->] (24) to node [auto] {$\Delta_F$} (20);
\draw [->] (24) to node [auto] {$G$} (26);
\draw [->] (28) to node [auto, swap] {$G$} (27);
\draw [->] (28) to node [auto] {$F$} (29);
\draw [double equal sign distance] (3) to (4);
\draw [double equal sign distance] (4) to (5);
\draw [double equal sign distance] (1) to (6);
\draw [double equal sign distance] (2) to (7);
\draw [double equal sign distance] (3) to (8);
\draw [double equal sign distance] (5) to (11);
\draw [double equal sign distance] (6) to (12);
\draw [double equal sign distance] (7) to (13);
\draw [double equal sign distance] (8) to (14);
\draw [double equal sign distance] (11) to (18);
\draw [double equal sign distance] (12) to (19);
\draw [double equal sign distance] (16) to (21);
\draw [double equal sign distance] (17) to (22);
\draw [double equal sign distance] (18) to (23);
\draw [double equal sign distance] (19) to (25);
\draw [double equal sign distance] (25) to (26);
\draw [double equal sign distance] (26) to (27);
\draw [double equal sign distance] (21) to (27);
\draw [double equal sign distance] (22) to (28);
\draw [double equal sign distance] (23) to (29);
\node at ($ (14.center) !.42! (20.center) $) {$\stackrel \beta \Rightarrow$};
\node at ($ (10.center) !.5! (16.center) $) {$\stackrel \alpha \Leftarrow$};
\end{tikzpicture}
\end{aligned}
\end{equation}
Note that $\eta_R$ appears in the top right, and $\epsilon_R$ in the
  bottom left of this diagram. The middle rows simply relate the
  composite $F \circ F^{\dagger} \circ F$ to the composites in the
  unit and counit by exhibiting the weak pullback which gives the
  composite.

  To see that the composite of these 2-cells is just the identity, we
  simply use the fact that the diagonal map has a special role
  relative to the weak pullback. Namely, in the following diagram we have that $\pi_1 \circ \Delta_G = \pi_2 \circ \Delta_G$:
\begin{equation}
\begin{aligned}
\begin{tikzpicture}[xscale=2, yscale=1.5]
\node (1) at (0,0) {$\bf X$};
\node (2) at (1,0) {$(G \downarrow G)$};
\node (3) at (2,1) {$\bf X$};
\node (4) at (2,0) {$\bf B$};
\node (5) at (2,-1) {$\bf X$};
\draw [->] (1) to node [auto] {$\Delta_G$} (2);
\draw [->] (2) to node [auto] {$\pi_1$} (3);
\draw [->] (2) to node [auto, swap] {$\pi_2$} (5);
\draw [->] (3) to node [auto] {$G$} (4);
\draw [->] (5) to node [auto, swap] {$G$} (4);
\node at (1.6,0) {$\Downarrow \!\! \alpha$};
\end{tikzpicture}
\end{aligned}
\end{equation}
 Furthermore, by construction of $\Delta_G$, the composite $\alpha \circ \Delta_G$ gives the identity natural transformation. So this 2-morphism is just the same as the identity:
\begin{equation}
\begin{aligned}
\begin{tikzpicture}[xscale=2, yscale=1.5]
\node (1) at (0,0) {$\bf X$};
\node (3) at (2,1) {$\bf X$};
\node (4) at (2,0) {$\bf B$};
\node (5) at (2,-1) {$\bf X$};
\draw [double equal sign distance] (1) to node [auto] {$$} (3);
\draw [double equal sign distance] (1) to node [auto, swap] {$$} (5);
\draw [->] (3) to node [auto] {$G$} (4);
\draw [->] (5) to node [auto, swap] {$G$} (4);
\end{tikzpicture}
\end{aligned}
\end{equation}
Applying this at both the unit and counit in~(\ref{eq:adjunction-composite}) one readily verifies that the whole composite is just the identity. The other three identities for an ambiadjunction are proved in precisely the same way, and hence our given 2-morphisms are indeed unit and counit cells for an ambiadjunction.
\end{proof}

In particular, we have the immediate special case:

\begin{corollary}\label{lemma:ambiadj}
There is an ambiadjunction between the spans $A$ and $A^{\dag}$.
\end{corollary}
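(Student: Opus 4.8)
The plan is to observe that the statement is a direct specialization of the preceding lemma, which shows quite generally that in $\SiiG$ the converse of any span is an ambidextrous adjoint. First I would identify the annihilation operator $A$, namely the span $\FSN \sxlto{\id_\FSN} \FSN \sxto{+1} \FSN$ of~\eqref{eq:loweringraising}, with a 1-morphism of exactly the form treated in the lemma: take $\mathbf{A} = \mathbf{B} = \mathbf{X} = \FSN$, with left leg $G = \id_\FSN$ and right leg $F = {+1}$. Since $\FSN$ is tame and these basic spans are both tame and cotame, $A$ is a genuine 1-morphism of $\SiiG$, so the hypotheses of the lemma are met.

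Next I would note that the creation operator $A^\dag$, namely $\FSN \sxlto{+1} \FSN \sxto{\id_\FSN} \FSN$, is precisely the converse span obtained by interchanging the two legs, and hence coincides with the morphism written $F^\dag$ in the lemma. The lemma then supplies the four 2-morphisms $\eta_L, \eta_R, \epsilon_L, \epsilon_R$ of~(\ref{eq:etal}\_\ref{eq:epsr}), constructed from the diagonal maps $\Delta_{+1}$ and $\Delta_{\id_\FSN}$ into the iso-comma groupoids $(+1 \downarrow +1)$ and $(\id_\FSN \downarrow \id_\FSN)$, together with the verification that they satisfy the adjunction equations~\eqref{eq:adjproperty}. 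This is exactly the content already proved, so no additional computation is required.

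There is no genuine obstacle here: the corollary is a literal instance of the lemma, and the only thing to confirm is the bookkeeping identifying $A$ with the general span $F$ and $A^\dag$ with $F^\dag$. It is worth remarking, however, that the iso-comma groupoid $(+1 \downarrow +1)$ which appears in $\eta_L$ and $\epsilon_R$ is exactly the weak pullback analyzed in~\eqref{eq:p1p1span} and Table~\eqref{eq:table1} in establishing the commutation relation. Thus the ambiadjunction structure is assembled from the very same combinatorial data that witnesses the isomorphism $A \circ A^\dag \simeq (A^\dag \circ A) \oplus \id_\FSN$, tying the adjointness of $A$ and $A^\dag$ directly to the categorified commutation relation of Lemma~\ref{lem:ccrbiproduct}.
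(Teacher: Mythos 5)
Your proposal is correct and matches the paper's own treatment: the corollary is stated there as an ``immediate special case'' of the preceding lemma, exactly the specialization you carry out by taking $G = \id_\FSN$, $F = {+1}$, so that the units and counits come from the diagonals into $(\id_\FSN \downarrow \id_\FSN)$ and $(+1 \downarrow +1)$. Your closing remark tying $(+1 \downarrow +1)$ to the weak pullback used for the categorified commutation relation is also consistent with the paper's own discussion of how the biproduct injections relate to the adjointness of $A$ and $A^\dag$.
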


\noindent
Thus we have, for example, the unit 2-morphism:
\begin{equation}\label{eq:unit-twocell}
\begin{aligned}
\begin{tikzpicture}[xscale=2, yscale=1.5]
\node (X) at (0,0) {$\FSN$};
\node (Y) at (0,2) {$\FSN$};
\node (Z) at (0,1) {$\FSN$};
\node (B) at (-1,1) {$\FSN$};
\node (A) at (1,1) {$\FSN$};
\draw [->] (X) to node [auto] {$+1$} (B);
\draw [->] (X) to node [auto, swap] {$+1$} (A);
\draw [->] (Y) to node [auto] {$\id_\FSN$} (A);
\draw [->] (Y) to node [auto, swap] {$\id_\FSN$} (B);
\draw [->] (Z) to node [auto] {$\id _\FSN$} (X);
\draw [->] (Z) to node [auto, swap] {$+1$} (Y);
\draw [<-, double] (0.4,0.7) to
    node [right, pos=0.5] {$\id$}
    (0.4,1.3);
\draw [<-, double] (-0.4,0.7) to
    node [right, pos=0.5] {$\id$}
    (-0.4,1.3);
\node at (0,-0.5) {$\id_\FSN \sxto{\eta} A ^\dag \circ A$};
\end{tikzpicture}
\end{aligned}
\end{equation}
The graphical representation is extended to depict $\eta$ and its converse in the following way:
\begin{align}
\centerdia{\begin{aligned}
\begin{tikzpicture}
\draw [white] (0,0) to (0,-1.0);
\draw (0,0) to (0,-0.2) to [out=down, in=down, looseness=2] (1,-0.2) to (1,0);
\draw [>-] (0,-0.2) to (0,0);
\draw [->] (1,0) to (1,-0.2);
\end{tikzpicture}
\end{aligned}}
&&
\centerdia{\begin{aligned}
\begin{tikzpicture}
\draw [white] (0,-0.2) to (0,0.8);
\draw (0,-0.2) to (0,0) to [out=up, in=up, looseness=2] (1,0) to (1,-0.2);
\draw [->] (0,-0.2) to (0,0);
\draw [>-] (1,0) to (1,-0.2);
\end{tikzpicture}
\end{aligned}}
\\
\centerdia{\eta}
&&
\centerdia{\eta ^\dag}
\end{align}
The equations for the ambiadjunction then have the following
representation in our graphical notation:
\begin{equation}
\label{eq:snakeequations}
\begin{tabular}{c@{\hspace{80pt}}c}
$\begin{aligned}
\begin{tikzpicture}[scale=0.9]
\draw [arrow=0.05, arrow=0.5, arrow=0.95] (0,0) to (0,1.5) to [out=up, in=up, looseness=2] (1,1.5) to [out=down, in=down, looseness=2] (2,1.5) to (2,3);
\end{tikzpicture}
\end{aligned}
\quad=\quad
\begin{aligned}
\begin{tikzpicture}[scale=0.9]
\draw [arrow=0.5] (0,0) to (0,3);
\end{tikzpicture}
\end{aligned}$
&
$\begin{aligned}
\begin{tikzpicture}[scale=0.9]
\draw [reverse arrow=0.05, reverse arrow=0.5, reverse arrow=0.95] (0,0) to (0,1.5) to [out=up, in=up, looseness=2] (-1,1.5) to [out=down, in=down, looseness=2] (-2,1.5) to (-2,3);
\end{tikzpicture}
\end{aligned}
\quad=\quad
\begin{aligned}
\begin{tikzpicture}[scale=0.9]
\draw [reverse arrow=0.5] (0,0) to (0,3);
\end{tikzpicture}
\end{aligned}$
\\\vspace{5pt}\\
$\begin{aligned}
\begin{tikzpicture}[scale=0.9]
\draw [reverse arrow=0.05, reverse arrow=0.5, reverse arrow=0.95] (0,0) to (0,1.5) to [out=up, in=up, looseness=2] (1,1.5) to [out=down, in=down, looseness=2] (2,1.5) to (2,3);
\end{tikzpicture}
\end{aligned}
\quad=\quad
\begin{aligned}
\begin{tikzpicture}[scale=0.9]
\draw [reverse arrow=0.5] (0,0) to (0,3);
\end{tikzpicture}
\end{aligned}$
&
$\begin{aligned}
\begin{tikzpicture}[scale=0.9]
\draw [arrow=0.05, arrow=0.5, arrow=0.95] (0,0) to (0,1.5) to [out=up, in=up, looseness=2] (-1,1.5) to [out=down, in=down, looseness=2] (-2,1.5) to (-2,3);
\end{tikzpicture}
\end{aligned}
\quad=\quad
\begin{aligned}
\begin{tikzpicture}[scale=0.9]
\draw [arrow=0.5] (0,0) to (0,3);
\end{tikzpicture}
\end{aligned}$
\end{tabular}
\end{equation}
One can readily see that these diagrams show exactly the ``path'' of a chain of $\id_X$ arrows in the big composite (\ref{eq:adjunction-composite}).

We can understand how $\eta$ acts with the following diagram which depicting the action of $\eta$ at the 2-element set; a similar picture could be drawn for each nonempty set in \FSN.
\begin{equation}
\begin{aligned}
\includegraphics{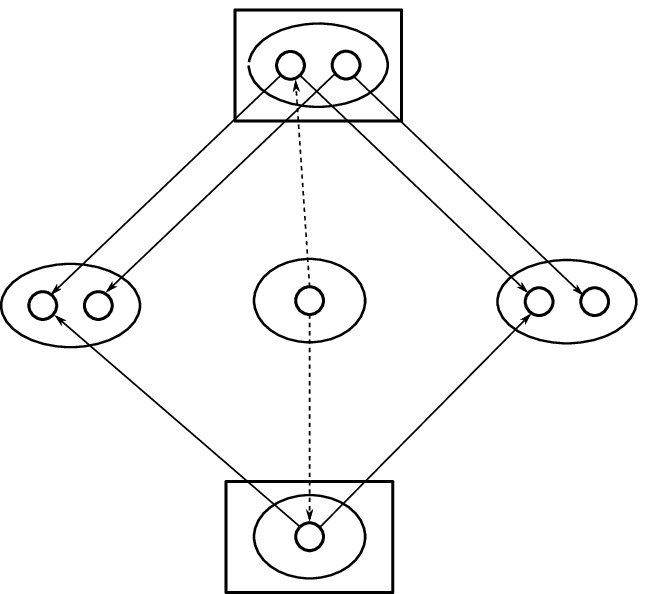}
\end{aligned}
\end{equation}
This can be compared to (\ref{diag:i-id-s}), which similarly illustrates $i _{\id _\FSN}$. The converse spans of spans are just the same, with the top-to-bottom orientation reversed.

This picture is a helpful aid to extending our combinatorial interpretation for other structures, given in Section~\ref{sec:interpretation}, to include $\eta$ and the snake equations (\ref{eq:snakeequations}). The span of spans $\eta$, and its converse, are interpreted in the following way. 
\begin{itemize}
\item $\id _\FSN \sxto{\eta} A ^\dag \circ A$. This relates the identity history to the history where an element is removed, and then added. This is impossible on the zero-element set, and so in that case $\eta$ relates the identity history to nothing.
\item $A ^\dag \circ A \sxto{\eta ^\dag} \id _\FSN$. This relates any history to the identity history.
\end{itemize}
The equations (\ref{eq:snakeequations}) can be established combinatorially in a similar way to equations~(\ref{eq:biproduct1}\_\ref{eq:biproduct5}). We examine the first  in detail. One the left-hand side, we begin with the operator $A ^\dag$, a history of which corresponds to adding some element $x$ to a set. We then apply $i _{\id _\FSN}$, passing to a history for which we add some element $y$, remove it, and then add our element $x$. Finally we apply our interpretation of $\eta ^\dag$, giving a history where we simply add the element $y$. This is not the same as our original history, for which we added the element $x$, but it is \textit{equivalent} to it, which is our definition of equality for 2-morphisms in \cat{Span(Gpd)} as given by equation~\eqref{eq:equivalencerelation}. The other snake equations can be interpreted in a similar way.

\subsection{Symmetric group actions}
\label{sec:symgrpact}

The construction of the full categorified Heisenberg algebra makes use of certain symmetrizer objects. These are subobjects invariant under the action of the symmetric groups $S_n$ on spans of the form $A ^n$ or $(A ^\dag) ^n$. Roughly, the effect of this action is to permute the order in which elements of a set are added or removed. We now describe it explicitly.
\begin{lemma}\label{lemma:symgrpaction}
There exist actions of $S_n$ on $A^n$ and $(A^{\dag})^n$.
\end{lemma}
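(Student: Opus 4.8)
The plan is to compute the $n$-fold composite $A^n$ explicitly as a span and then to realize the $S_n$-action by self-equivalences of its apex groupoid which permute the order in which elements are removed. Write $\cat E_n$ for the groupoid whose objects are pairs $(T,\iota)$ consisting of a finite set $T$ together with an injection $\iota:\{1,\dots,n\}\hookrightarrow T$ (equivalently, an ordered list $x_1,\dots,x_n$ of distinct ``marked'' elements of $T$), and whose morphisms $(T,\iota)\to(T',\iota')$ are bijections $f:T\to T'$ with $f\circ\iota=\iota'$. I claim that $A^n$ is isomorphic in $\SiiG$ to the span $L_n:=\big(\FSN \sxlto{\ell}\cat E_n \sxto{r}\FSN\big)$, where $r(T,\iota)=T$ records the source (starting) configuration and $\ell(T,\iota)=T\setminus\iota(\{1,\dots,n\})$ records the target configuration obtained after the $n$ marked elements have been removed.

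First I would establish this identification by induction on $n$, using the weak-pullback composition of Section~\ref{sec:composingspans}. The base case $n=1$ is the observation already made in Section~\ref{sec:combinatorics}: the apex of $A$ in~\eqref{eq:loweringraising} is $\FSN$, and sending a set $S$ to $S+1$ with its distinguished extra point identifies it with $\cat E_1$, carrying the legs $\id$ and $+1$ to $\ell$ and $r$. For the inductive step I would form $A^{n+1}=A\circ A^n\simeq A\circ L_n$ as the weak pullback of the source leg $+1$ of $A$ against the target leg $\ell$ of $L_n$; an object is then a triple consisting of an object $(T,\iota)$ of $\cat E_n$, a set $S$, and an isomorphism $\ell(T,\iota)\xrightarrow{f} S+1$ matching their images in the middle $\FSN$. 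I would exhibit an equivalence of this groupoid with $\cat E_{n+1}$ by retaining the $n$ existing marks in $T$ and adjoining, as the $(n+1)$st mark, the element of $T$ that $f$ identifies with the distinguished point of $S+1$; note that the ambient set $T$ is unchanged, so the source leg is still $T\mapsto T$. This is the step I expect to be the main obstacle: the apex produced by the weak pullback is literally a groupoid of tuples-with-isomorphisms, and the equivalence with the clean groupoid $\cat E_{n+1}$, together with the verification that it intertwines the two legs, is the one genuinely fiddly computation. Independence of the bracketing of the $n$-fold composite is then automatic, since all bracketings are canonically isomorphic.

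With the identification in hand the action is immediate. For each $\sigma\in S_n$ let $\Phi_\sigma:\cat E_n\to\cat E_n$ be the functor $(T,\iota)\mapsto(T,\iota\circ\sigma^{-1})$ acting as the identity on underlying bijections; this permutes the order of the marks. Since $\Phi_\sigma$ leaves both the underlying set $T$ and the \emph{unordered} set of marks unchanged, it commutes strictly with $r$ and with $\ell$, so $(\Phi_\sigma,\id,\id)$ is a span map, the special case of~\eqref{eq:2spandiag} in which the apex of the span of spans is $\cat E_n$ itself with $\mu,\nu$ identities, and hence defines a 2-morphism $L_n\Rightarrow L_n$. It is invertible with inverse $\Phi_{\sigma^{-1}}$, and under vertical composition $\Phi_\sigma\circ\Phi_\tau=\Phi_{\sigma\tau}$, so $\sigma\mapsto\Phi_\sigma$ is a homomorphism from $S_n$ into the group of invertible 2-endomorphisms of $L_n$. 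Conjugating by the chosen isomorphism $A^n\simeq L_n$ transports this to the desired $S_n$-action on $A^n$.

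Finally, the case of $(A^\dag)^n$ is obtained for free. Since $A^\dag$ is the converse of $A$ and taking converses reverses the order of composition, $(A^\dag)^n=(A^n)^\dag$, whose apex is again $\cat E_n$ but with the legs $r$ and $\ell$ interchanged, the marks now being the elements \emph{added}. Applying the converse operation on 2-morphisms from the end of Section~\ref{sec:forming2cat} to each $\Phi_\sigma$ yields invertible 2-endomorphisms of $(A^n)^\dag$; as converse reverses vertical composition, $\sigma\mapsto(\Phi_\sigma)^\dag$ is an anti-homomorphism, so precomposing with $\sigma\mapsto\sigma^{-1}$ gives an honest $S_n$-action on $(A^\dag)^n$. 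Alternatively, one checks directly that the same mark-permuting functors act on the apex of $(A^\dag)^n$ and commute with its two legs, which yields the action without reference to converses.
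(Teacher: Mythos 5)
Your construction is correct, and it produces the same 2-morphisms as the paper, but through a genuinely different presentation, so a comparison is worthwhile. The paper's proof has no induction and introduces no new groupoid: it asserts that, up to isomorphism, $A^n$ is the span $\FSN \sxlto{\id_\FSN} \FSN \sxto{+n} \FSN$ with $(+n):=(+1)^n$, and then realizes each $\mu \in S_n$ as the span of spans of \eqref{eq:2spandiag} whose apex functors are identities and whose only nontrivial datum is the natural endo-transformation $\hat{\mu}\colon (+n) \Rightarrow (+n)$ of \eqref{eq:symnattrans}, placed on one leg (with the identity 2-cell on the other). You instead keep both legs strictly commuting and push all of the symmetry into automorphisms $\Phi_\sigma$ of a marked-sets apex $\cat E_n$. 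The two presentations are intertranslatable: under the canonical equivalence $\FSN \simeq \cat E_n$ sending $S \mapsto (S \sqcup \mathbf{n}, \mathrm{incl})$, your $\Phi_\sigma$ corresponds to the paper's span of spans built from $\hat{\sigma}$, so the same equivalence classes result. What your route buys is an actual proof of the normal form for $A^n$ (the paper leaves its ``up to isomorphism'' claim unproved), at the cost of the inductive weak-pullback computation you rightly flag as the fiddly step; what the paper's route buys is brevity, and the transformations $\hat{\mu}$ it isolates are reused later --- they are the ingredients of the symmetrizers in Section~\ref{sec:symmetrizers} and appear in Table~\ref{table:correspondence}.

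One slip in your final paragraph, though it does not sink the proof: the converse of 2-morphisms defined at the end of Section~\ref{sec:forming2cat} takes a span of spans $(\cat X \sxlto{S} \cat Z \sxto{T} \cat Y, \mu, \nu)$ from a span $P$ to a span $Q$ to a span of spans from $Q$ back to $P$ \emph{between the same two spans} $P, Q \colon \cat A \to \cat B$; it reverses the direction of the 2-cell but does not replace $P$ and $Q$ by their converse spans. So applying it to $\Phi_\sigma \colon A^n \Rightarrow A^n$ yields another 2-endomorphism of $A^n$, not one of $(A^n)^\dag$. The operation you actually want is the evident relabelling that swaps which leg of each span is called source and which target (interchanging the roles of $\mu$ and $\nu$); this is covariant under vertical composition, so no precomposition with $\sigma \mapsto \sigma^{-1}$ is needed. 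Your stated alternative --- checking directly that the $\Phi_\sigma$ commute with both legs of $(A^\dag)^n$ --- is correct, is the cleanest repair, and is in effect what the paper does by treating $A^n$ and $(A^\dag)^n$ symmetrically from the outset.
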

\begin{proof}
Up to isomorphism, spans of the form $A ^n$ and $(A ^\dag) ^n$ have the following form:
\begin{equation}
\begin{aligned}
\begin{tikzpicture}[xscale=2, yscale=1.5]
\node (T) at (0,0) {\FSN};
\node (L) at (-1,-1) {\FSN};
\node (R) at (1,-1) {\FSN};
\draw [->] (T) to node [auto, swap] {$\id _\FSN$} (L);
\draw [->] (T) to node [auto] {$+n$} (R);
\end{tikzpicture}
\end{aligned}
\hspace{70pt}
\begin{aligned}
\begin{tikzpicture}[xscale=2, yscale=1.5]
\node (T) at (0,0) {\FSN};
\node (L) at (-1,-1) {\FSN};
\node (R) at (1,-1) {\FSN};
\draw [->] (T) to node [auto, swap] {$+n$} (L);
\draw [->] (T) to node [auto] {$\id_\FSN$} (R);
\end{tikzpicture}
\end{aligned}
\end{equation}
Here we define $(+n):=(+1)^n$, the functor which takes the disjoint union with an $n$\-element set. It has natural endo-transformations $\hat{\mu}$ for each $\mu \in \Sn$, defined for each set $T$ by the set maps
\begin{equation}\label{eq:symnattrans}
  \hat{\mu}_T : T \sqcup \mathbf{n} \ra T \sqcup \mathbf{n},
\end{equation}
which act by $\hat{\mu}_T(t) = t$ for all $t \in T$ and
$\hat{\mu}_T(j) = \mu(j)$ for all $j \in \mathbf{n}$. These give rise to the following spans of spans:
\begin{align}
\begin{aligned}
\begin{tikzpicture}[xscale=2, yscale=1.5]
\node (X) at (0,0) {$\FSN$};
\node (Y) at (0,-2) {$\FSN$};
\node (Z) at (0,-1) {$\FSN$};
\node (B) at (-1,-1) {$\FSN$};
\node (A) at (1,-1) {$\FSN$};
\draw [->] (X) to node [auto, swap] {$\id_\FSN$} (B);
\draw [->] (X) to node [auto] {$+n$} (A);
\draw [->] (Y) to node [auto, swap] {$+n$} (A);
\draw [->] (Y) to node [auto] {$\id_\FSN$} (B);
\draw [->] (Z) to node [auto, swap] {$\id_\FSN$} (X);
\draw [->] (Z) to node [auto] {$\id_\FSN$} (Y);
\draw [->, double] (0.4,-0.7) to
    node [right, pos=0.5] {$\hat \mu$}
    (0.4,-1.3);
\draw [->, double] (-0.4,-0.7) to
    node [right, pos=0.5] {$\id$}
    (-0.4,-1.3);
\end{tikzpicture}
\end{aligned}
&&
\begin{aligned}
\begin{tikzpicture}[xscale=2, yscale=1.5]
\node (X) at (0,0) {$\FSN$};
\node (Y) at (0,-2) {$\FSN$};
\node (Z) at (0,-1) {$\FSN$};
\node (B) at (-1,-1) {$\FSN$};
\node (A) at (1,-1) {$\FSN$};
\draw [->] (X) to node [auto, swap] {$+n$} (B);
\draw [->] (X) to node [auto] {$\id_\FSN$} (A);
\draw [->] (Y) to node [auto, swap] {$\id_\FSN$} (A);
\draw [->] (Y) to node [auto] {$+n$} (B);
\draw [->] (Z) to node [auto, swap] {$\id_\FSN$} (X);
\draw [->] (Z) to node [auto] {$\id_\FSN$} (Y);
\draw [->, double] (0.4,-0.7) to
    node [right, pos=0.5] {$\id$}
    (0.4,-1.3);
\draw [->, double] (-0.4,-0.7) to
    node [right, pos=0.5] {$\hat \mu$}
    (-0.4,-1.3);
\end{tikzpicture}
\end{aligned}
\end{align}
Intuitively these permute the extra $n$ elements, and leave the remaining elements unchanged.
\end{proof}

For the action of the non-identity element of $S_2$ on $A^2$ and $(A ^\dag) ^2$, we use the following graphical representation:
\begin{align}
\label{eq:symmetrygraphically}
\centerdia{\begin{aligned}
\begin{tikzpicture}
\draw [reverse arrow=0.1, reverse arrow=0.9] (0,0) to [out=up, in=down] (1,1.5);
\draw [reverse arrow=0.1, reverse arrow=0.9] (1,0) to [out=up, in=down] (0,1.5);
\end{tikzpicture}
\end{aligned}}
&&
\centerdia{\begin{aligned}
\begin{tikzpicture}
\draw [arrow=0.1, arrow=0.9] (0,0) to [out=up, in=down] (1,1.5);
\draw [arrow=0.1, arrow=0.9] (1,0) to [out=up, in=down] (0,1.5);
\end{tikzpicture}
\end{aligned}}
\end{align}
Actions of $S_n$ on $A^n$ and $(A ^\dag) ^n$ can be built up from these basic permutations of adjacent operations. Because they are actions of the symmetric group, they satisfy the following equations:
\begin{align}
\begin{aligned}
\begin{tikzpicture}
\draw [white] (0,0.2) to (1,0.2);
\draw [reverse arrow=0.05, reverse arrow=0.5, reverse arrow=0.95] (0,0)
    to [out=up, in=down] (1,1.5) to [out=up, in=down] (0,3);
\draw [reverse arrow=0.05, reverse arrow=0.5, reverse arrow=0.95] (1,0)
    to [out=up, in=down] (0,1.5) to [out=up, in=down] (1,3);
\end{tikzpicture}
\end{aligned}
\quad=\quad
\begin{aligned}
\begin{tikzpicture}
\draw [white] (0,0.2) to (1,0.2);
\draw [reverse arrow=0.5] (0,0)to (0,3);
\draw [reverse arrow=0.5] (1,0) to (1,3);
\end{tikzpicture}
\end{aligned}
&&
\begin{aligned}
\begin{tikzpicture}
\draw [white] (0,0.2) to (1,0.2);
\draw [arrow=0.05, arrow=0.5, arrow=0.95] (0,0)
    to [out=up, in=down] (1,1.5) to [out=up, in=down] (0,3);
\draw [arrow=0.05, arrow=0.5, arrow=0.95] (1,0)
    to [out=up, in=down] (0,1.5) to [out=up, in=down] (1,3);
\end{tikzpicture}
\end{aligned}
\quad=\quad
\begin{aligned}
\begin{tikzpicture}
\draw [white] (0,0.2) to (1,0.2);
\draw [arrow=0.5] (0,0)to (0,3);
\draw [arrow=0.5] (1,0) to (1,3);
\end{tikzpicture}
\end{aligned}
\end{align}
They also satisfy the following equations, making them generators of $S_n$ for any $n$:
\begin{align}
\begin{aligned}
\begin{tikzpicture}
\draw [arrow=0.08, arrow=0.92] (0,0)
    to [out=up, in=down] (1,1)
    to [out=up, in=down] (0,2);
\draw [arrow=0.08, arrow=0.92] (-1,0) to [out=up, in=down] (1,2);
\draw [arrow=0.08, arrow=0.92] (1,0) to [out=up, in=down] (-1,2);
\end{tikzpicture}
\end{aligned}
\quad=\quad
\begin{aligned}
\begin{tikzpicture}
\draw [arrow=0.08, arrow=0.92] (0,0)
    to [out=up, in=down] (-1,1)
    to [out=up, in=down] (0,2);
\draw [arrow=0.08, arrow=0.92] (-1,0) to [out=up, in=down] (1,2);
\draw [arrow=0.08, arrow=0.92] (1,0) to [out=up, in=down] (-1,2);
\end{tikzpicture}
\end{aligned}
&&
\begin{aligned}
\begin{tikzpicture}
\draw [reverse arrow=0.08, reverse arrow=0.92] (0,0)
    to [out=up, in=down] (1,1)
    to [out=up, in=down] (0,2);
\draw [reverse arrow=0.08, reverse arrow=0.92] (-1,0) to [out=up, in=down] (1,2);
\draw [reverse arrow=0.08, reverse arrow=0.92] (1,0) to [out=up, in=down] (-1,2);
\end{tikzpicture}
\end{aligned}
\quad=\quad
\begin{aligned}
\begin{tikzpicture}
\draw [reverse arrow=0.08, reverse arrow=0.92] (0,0)
    to [out=up, in=down] (-1,1)
    to [out=up, in=down] (0,2);
\draw [reverse arrow=0.08, reverse arrow=0.92] (-1,0) to [out=up, in=down] (1,2);
\draw [reverse arrow=0.08,reverse arrow=0.92] (1,0) to [out=up, in=down] (-1,2);
\end{tikzpicture}
\end{aligned}
\end{align}
As relations on histories, the symmetry on $A \circ A$ relates the history `remove $x$, and then remove $y$' to the history `remove $y$, and then remove $x$'. The symmetry on $A ^\dag \circ A ^\dag$ can be described in a similar way. The equations satisfied by these symmetries can then be accounted for using these combinatorial interpretations.

Together with the morphism $i _{A ^\dag \circ A}$ and its converse, these morphisms allow us to interpret arbitrary braid diagrams involving strands labeled $A$ or $A ^\dag$. These are not invertible in general, since $\injAsA$ is not an isomorphism but an inclusion, and so we do not get a representation of the symmetric group in these cases where both $A$ and $A^\dag$ appear.

This is important for connections to quantum field theory, for which powers of the field operator $\Phi = A + A^{\dagger}$ play a major role.  The inner product
$\inprod{\psi,p(\phi)\psi'}$ (for a polynomial $p$) is the groupoid
cardinality of the stuff type inner product
\begin{equation}
\inprod{\Psi,p(\Phi)\Psi'}
\end{equation}
where $\Psi$, $\Psi '$ are any groupoidifications of the vectors
$\psi$, $\psi '$.  As described in \cite{mstuff}, this can be
interpreted as a \textit{sum over histories}, which are represented by
Feynman diagrams.  These are given weights which are exactly as
determined by the groupoid cardinality.

\subsection{Failure of naturality}
\label{sec:notnatural}

The crossings in our graphical notation are self-inverse when all the strands are oriented in the same direction. This is the basis of the symmetric group action on powers such as $A^n$ and $(A^{\dag})^n$. However, this is not the case for `mixed' crossings in which the strands are oriented in opposite directions, a fact implied by \eqref{eq:biproduct5}. Indeed, this is essential to the categorification of the commutation relations.

By a theorem of Yetter~\cite{yetter-qg}, a lax braiding for an object with a right dual is automatically invertible. Since our `braidings' are not invertible, one of the axioms of a lax braiding must fail, and the culprit is naturality. If we \textit{assume} naturality, we can use Yetter's argument to show that the braiding is invertible, in contradiction with equation~\eqref{eq:biproduct5}:
\allowdisplaybreaks[1]
\begin{align}
\nonumber
&\begin{aligned}
\begin{tikzpicture}
\draw [reverse arrow={0.05}, reverse arrow=0.95] (0,-2) to (0,0)
    to [out=up, in=down] (1,1)
    to [out=up, in=down] (0,2);
\draw [arrow={0.05}, arrow=0.95] (1,-2) to (1,0)
    to [out=up, in=down] (0,1)
    to [out=up, in=down] (1,2);
\end{tikzpicture}
\end{aligned}
\quad=\quad
\begin{aligned}
\begin{tikzpicture}
\draw [arrow=0.03, arrow=0.97] (1,0)
    to (1,1)
    to [out=up, in=up, looseness=1.5] (0,1)
    to [out=down, in=down, looseness=1.5] (-1,1)
    to [out=up, in=down] (-1,2)
    to [out=up, in=down] (-2,3)
    to [out=up, in=down] (-1,4);
\draw [reverse arrow=0.05, reverse arrow=0.95] (-2,0)
    to [out=up, in=down] (-2,1)
    to [out=up, in=down] (-2,2)
    to [out=up, in=down] (-1,3)
    to [out=up, in=down] (-2,4);
\end{tikzpicture}
\end{aligned}
\quad=\quad
\begin{aligned}
\begin{tikzpicture}
\draw [arrow=0.027, arrow=0.975] (1,0)
    to (1,1)
    to [out=up, in=up] (-1,1)
    to [out=down, in=down, looseness=1.5] (-2,1)
    to [out=up, in=down] (-1,2)
    to [out=up, in=down] (-2,3)
    to [out=up, in=down] (-1,4);
\draw [reverse arrow=0.037, reverse arrow=0.957] (0,0)
    to [out=up, in=down] (0,1)
    to [out=up, in=down] (-2,2)
    to [out=up, in=down] (-1,3)
    to [out=up, in=down] (-2,4);
\end{tikzpicture}
\end{aligned}
\\
&\hspace{20pt}
\quad=\quad
\begin{aligned}
\begin{tikzpicture}
\draw [arrow=0.027, arrow=0.97] (1,0)
    to (1,1)
    to [out=up, in=up] (-1,1)
    to [out=down, in=down, looseness=1.5] (-2,1)
    to [out=up, in=down] (-2,3)
    to [out=up, in=down] (-1,4);
\draw [reverse arrow=0.045, reverse arrow=0.95] (0,0)
    to [out=up, in=down] (0,1)
    to [out=up, in=down] (-1,2)
    to [out=up, in=down] (-1,3)
    to [out=up, in=down] (-2,4);
\end{tikzpicture}
\end{aligned}
\quad=\quad
\begin{aligned}
\begin{tikzpicture}
\draw [arrow=0.035, arrow=0.966] (1,0)
    to (1,2)
    to [out=up, in=up, looseness=1.5] (0,2)
    to [out=down, in=down, looseness=1.5] (-1,2)
    to [out=up, in=down] (-1,3)
    to [out=up, in=down] (-1,4);
\draw [reverse arrow=0.045, reverse arrow=0.95] (0,0)
    to [out=up, in=down] (-2,2)
    to [out=up, in=down] (-2,4);
\end{tikzpicture}
\end{aligned}
\quad=\quad
\begin{aligned}
\begin{tikzpicture}
\draw [arrow=0.5] (1,0)
    to [out=up, in=down] (1,4);
\draw [reverse arrow=0.5] (0,0)
    to [out=up, in=down] (0,4);
\end{tikzpicture}
\end{aligned}
\end{align}
Combinatorially, the following naturality property fails to hold.
\begin{lemma}
The crossings~\eqref{eq:basiccomponents} violate naturality in the combinatorial representation.
\end{lemma}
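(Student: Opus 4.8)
The plan is to combine the string-diagram computation displayed immediately above --- which is precisely Yetter's argument, run under the \emph{hypothesis} that the mixed crossings are natural --- with the non-invertibility that Lemma~\ref{lem:ccrbiproduct} establishes combinatorially. That computation shows that \emph{if} the relevant naturality square commutes, then the double mixed crossing $i_{A^\dagger \circ A} \cdot (i_{A^\dagger \circ A})^\dagger$ of type $A \circ A^\dagger \to A \circ A^\dagger$ can be straightened --- using only the snake equations~\eqref{eq:snakeequations} of the ambiadjunction (Corollary~\ref{lemma:ambiadj}) and one application of naturality --- into the identity 2-morphism $\id_{A \circ A^\dagger}$. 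So it suffices to exhibit combinatorially that this composite is \emph{not} the identity; the naturality hypothesis is then contradicted, which is exactly what the lemma asserts.

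The combinatorial heart is thus to compute $i_{A^\dagger \circ A} \cdot (i_{A^\dagger \circ A})^\dagger$ directly as a relation on histories, using the dictionary of Section~\ref{sec:interpretation}. Reading right-to-left, its converse $(i_{A^\dagger\circ A})^\dagger$ takes a history of $A \circ A^\dagger$ --- ``add $y$, then remove $x$'' --- and is defined only when $x \neq y$, sending it to the history ``remove $x$, then add $y$'' of $A^\dagger \circ A$; the crossing $i_{A^\dagger \circ A}$ then returns this to ``add $y$, then remove $x$''. The net effect is the identity on the histories with $x \neq y$ but the \emph{zero} relation on those with $x = y$, i.e.\ on the ``add and remove the same element'' histories, which already exist on the one-element set. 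Hence the composite is the projection onto the $x \neq y$ summand, not the identity. This is exactly the content of~\eqref{eq:biproduct5}, in which $\id_{A \circ A^\dagger} = i_{\id_\FSN} \cdot (i_{\id_\FSN})^\dagger + i_{A^\dagger \circ A} \cdot (i_{A^\dagger \circ A})^\dagger$ and the first summand is nonzero; either reading contradicts the conclusion of the naturality-assuming computation, so naturality must fail.

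The main obstacle is twofold. First, one must isolate the single step in the displayed chain of string-diagram equalities at which naturality actually enters --- the move that slides a mixed crossing past a cup or cap from the ambiadjunction in the three-strand intermediate diagrams --- so that the global contradiction is pinned to one concrete naturality square rather than to the straightening as a whole. Second, as emphasized in Section~\ref{sec:interpretation}, a span of spans is not determined by the mere set of histories it relates, since the induced actions on the automorphism groups $S_n$ must also agree; to make the argument watertight I would confirm that the discrepancy already appears at the coarsest level, exhibiting a history (``add and remove the same element'') that the identity keeps but the double crossing annihilates. Such a difference cannot be repaired by any choice of symmetry-group action, and so cannot be absorbed by the equivalence relation~\eqref{eq:equivalencerelation} on 2-morphisms. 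The remaining verifications are the routine history-tracking already rehearsed for~\eqref{eq:biproduct1}--\eqref{eq:biproduct5}.
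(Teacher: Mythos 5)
Your proposal is correct, but it proves the lemma by a genuinely different route than the paper. You argue globally by contradiction: if every naturality square held, the Yetter-style straightening displayed just before the lemma (which needs only the snake equations~\eqref{eq:snakeequations} of the ambiadjunction of Corollary~\ref{lemma:ambiadj} plus naturality) would force $i_{A^\dagger \circ A} \cdot (i_{A^\dagger \circ A})^\dagger = \id_{A \circ A^\dagger}$, and you then check combinatorially --- exactly as in the paper's own reading of~\eqref{eq:biproduct5} in Section~\ref{sec:interpretation} --- that this double crossing annihilates every ``add and remove the same element'' history, whereas the identity relates each such history to itself; since that discrepancy concerns which histories are related at all, it cannot be absorbed by the equivalence relation~\eqref{eq:equivalencerelation}, as you correctly note. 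The paper instead gives a direct, self-contained counterexample to a \emph{single} naturality square: crossing an $A$ strand over a cup versus placing the cup beside the straight strand. The first composite factors through $A^\dagger \circ A \circ A$ and therefore annihilates the history that removes the unique element of a one-element set, while the second does not, so the two sides of that square differ. What each approach buys: the paper's argument is shorter, needs only one history on a one-element set, and pinpoints exactly which square fails; yours reuses machinery already established (the ambiadjunction and the biproduct decomposition), makes explicit the logical link between non-invertibility of the crossings and failure of naturality, but is non-constructive --- it shows some square must fail without exhibiting one --- and it leans on the validity of every non-naturality step in the displayed straightening chain, which you flag as an obstacle but do not verify. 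Note finally that your first ``obstacle'' is not actually needed for the statement as given: the lemma asserts only that naturality fails, so the global contradiction suffices without isolating the offending square.
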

\begin{proof}
The following composites are not equal:
\begin{equation}
\begin{aligned}
\begin{tikzpicture}
\draw [arrow=0.045, arrow=0.97] (0,3)
    to [out=down, in=up] (-1,1)
    to [out=down, in=down, looseness=1.5] (-2,1)
    to [out=up, in=down] (-1,3);
\draw [reverse arrow=0.045, reverse arrow=0.95] (0,0)
    to [out=up, in=down] (-2,3);
\end{tikzpicture}
\end{aligned}
\quad\ne\quad
\begin{aligned}
\begin{tikzpicture}
\draw [arrow=0.075, arrow=0.95] (0,3)
    to [out=down, in=up] (0,2)
    to [out=down, in=down, looseness=1.5] (-1,2)
    to [out=up, in=down] (-1,3);
\draw [reverse arrow=0.06, reverse arrow=0.94] (-2,0)
    to [out=up, in=down] (-2,3);
\end{tikzpicture}
\end{aligned}
\end{equation}
The reason this equality fails is that the left-hand side factors through the composite $A^\dag \circ A \circ A$, and hence annihilates the history that removes the unique element from a 1\-element set. The right-hand does not annihilate this history, and hence the two spans of spans cannot be equal.
\end{proof}

\subsection{The twist}\label{sec:twist}

\begin{lemma}
\label{lem:twist}
Khovanov's `left twist equals zero' axiom holds in the combinatorial representation:
\begin{equation}
\begin{aligned}
\begin{tikzpicture}
\draw [arrow=0.05, arrow=0.5, arrow=0.95] (0,0)
    to (0,1)
    to [out=up, in=down] (-1,2)
    to [out=up, in=up, looseness=1.5] (-2,2)
    to (-2,1)
    to [out=down, in=down, looseness=1.5] (-1,1)
    to [out=up, in=down] (0,2)
    to (0,3);
\end{tikzpicture}
\end{aligned}
\quad=\quad
0 _{A ^\dag, A ^\dag}
\end{equation}
\end{lemma}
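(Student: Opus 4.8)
The plan is to follow the combinatorial strategy already used for Lemma~\ref{lem:ccrbiproduct} and for the failure of naturality: decompose the left-twist 2\-morphism into the generating spans of spans, read off the resulting relation on histories using the combinatorial interpretation of Section~\ref{sec:interpretation}, and show that this relation is empty, so that the composite span of spans has an apex groupoid equivalent to the empty groupoid --- which is exactly the zero 2\-morphism $0_{A^\dag, A^\dag}$.

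First I would read the decomposition directly off the diagram. Slicing horizontally, the single $A^\dag$ strand first opens up, via a copy of $i_{\id_\FSN}$ placed to its left, into three strands whose orientations (left to right) are $A$, $A^\dag$, $A^\dag$; the two rightmost $A^\dag$ strands then cross, using the same-orientation swap of Lemma~\ref{lemma:symgrpaction}; and finally the two leftmost strands, now of type $A \circ A^\dag$, are capped off by $(i_{\id_\FSN})^\dag$, leaving a single outgoing $A^\dag$ strand. Thus the twist is the composite
\[
\big( (i_{\id_\FSN})^\dag \otimes \id_{A^\dag} \big) \circ \big( \id_A \otimes \sigma \big) \circ \big( i_{\id_\FSN} \otimes \id_{A^\dag} \big),
\]
where $\sigma$ is the swap 2\-morphism on $A^\dag \circ A^\dag$.

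Next I would chain the history-level interpretations. A source history of $A^\dag$ adds some element $x$. The factor $i_{\id_\FSN}$ introduces a fresh element $e$, added and then removed by the new $A^\dag$ and $A$ strands; crucially, $e$ lies in its own summand of the relevant disjoint union and so is necessarily distinct from $x$. The swap $\sigma$ interchanges the two additions, so that the element $e$ removed on the far-left $A$ strand is now paired, for the purposes of the final cap, with the element $x$ added on the adjacent strand. But $(i_{\id_\FSN})^\dag$ relates an $A \circ A^\dag$ history to a nontrivial history only when the added and removed elements coincide, and relates it to nothing otherwise. Since $e \neq x$ always, every source history is related to nothing: the relation is empty. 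This is precisely the mechanism by which the crossing converts the identity-producing zigzag of the snake equations~\eqref{eq:snakeequations} into the zero map, and it is the twist analogue of the factorization argument used above to establish the failure of naturality.

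The main obstacle is making the ``$e \neq x$, hence nothing'' step rigorous at the level of spans of spans rather than of heuristic relations. The combinatorial interpretation does not by itself pin down a 2\-morphism, since the induced actions on symmetry groups must also be tracked; so I would compute the apex groupoid of the composite as an iterated weak pullback, and verify that the natural isomorphisms imposed at the cap force an identification $e = x$ of elements lying in distinct summands, so that the apex has no objects. Identifying this empty apex with $0_{A^\dag, A^\dag}$ up to the equivalence of~\eqref{eq:equivalencerelation} then completes the proof. The bookkeeping of which element-identity constraints the swap transports across the diagram --- ensuring that the cap really sees $e$ against $x$ and not $e$ against $e$ --- is the delicate point, and is exactly what distinguishes the twisted composite (zero) from the untwisted snake (the identity).
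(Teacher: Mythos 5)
Your proposal is correct and follows essentially the same route as the paper: the paper's proof is precisely the history-tracking argument you give --- the cup $i_{\id_\FSN}$ introduces a fresh element (your $e$, the paper's $y$) necessarily distinct from the added element $x$, the like-oriented crossing exchanges the two additions, and the cap $(i_{\id_\FSN})^\dag$ then demands the impossible identification of the two distinct elements, so every history is related to nothing. The paper stops at exactly this combinatorial level; your closing point about rigorously verifying that the apex groupoid of the iterated weak pullback is empty is additional care that the paper's own proof does not carry out.
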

\begin{proof}
We verify this combinatorially as follows. The initial span is $A ^\dag$, and we begin with a history in this span which adds an element $x$ to a set. This is related to a history in which we add $x$, then add and remove some new element $y$, according to our interpretation of $i _{\id _\FSN}$ as described above. We then apply the symmetry map, obtaining a history in which we add $y$, add $x$, and then remove $y$. Finally we apply our interpretation of $(i _{\id _\FSN}) ^\dag$ to the final add-remove pair, which only relates histories in which the element we remove is the same as the element which we add. However, in this case this is impossible, as $x$ and $y$ are different elements, and so our history is related to nothing. Combinatorially, this just says that we are \textit{distinguishing} $x$ and $y$.
\end{proof}

\subsection{Main theorem}\label{sec:rep2catthm}

\newcommand\HZ{\ensuremath{\cat{H' _{\mathbb{Z}_+}}}}
To this point, we have described a combinatorial interpretation of the
graphical notation for the monoidal category $\cat H'$. This
interpretation may be understood as a representation as living in the 2-category
$\cat{Span(Gpd)}$, as  a functor from \cat{H'} to hom-category of spans on the object $\FSN$. This interpretation is summarized in Table \ref{table:correspondence}. One can equivalently build a 2\-category $\cat{\Omega_{H'}}$ with one object, whose morphisms and 2-morphisms are the objects and morphisms of \cat{H'}. Then our representation is exactly a 2\-functor
$$\cat{\Omega_{H'}} \sxto C \cat{Span(Gpd)}.$$
\begin{table}
\begin{center}
  \begin{tabular}{|l|l|}
  \hline
    \bf Khovanov's $\cat{H'}$ & \bf Image in $\SiiG$ \\
    \hline
    (none) & $\FSN$ \\
    $Q_-$, $Q_+$ & $A$, $A^{\dagger}$ \\
    $\downarrow$, $\uparrow$ & $\id_A$, $\id_{A^{\dagger}}$ \\
    $\otimes$ & $\circ$ \\
    $\cap$, $\cup$ & $\eta$, $\epsilon$ \\
    crossings & $\hat{\mu}$, $\injAsA$, $(\injAsA)^{\ast}$ \\
  \hline
  \end{tabular}
\end{center}
\caption{\label{table:correspondence}The correspondence between categorifications} 
\end{table}This is formalized by the following theorem.
\begin{theorem}\label{thm:mainthm}
  The combinatorial interpretation of Khovanov's categorified Heisenberg algebra gives a representation of \cat K on the object $\FSN$ in the 2-category
  \cat{Span(Gpd)}.
\end{theorem}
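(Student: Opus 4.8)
The plan is to exhibit the 2-functor $C$ directly and then reduce the theorem to relations already established in the preceding sections. Recall that $\cat{\Omega_{H'}}$ is the one-object 2-category presented by Khovanov's generators and relations: its generating 1-morphisms are the strands $Q_+$ and $Q_-$, and its generating 2-morphisms are the cups, caps and crossings of Section~\ref{sec:khovanov}. On this presentation I would define $C$ by the assignments summarised in Table~\ref{table:correspondence}: the unique object goes to $\FSN$; the strands $Q_+, Q_-$ go to the spans $A^\dag, A$; the monoidal product (horizontal juxtaposition of strands) goes to composition of spans by weak pullback; and the generating 2-morphisms go to the spans of spans $i_{\id_\FSN}$, $(i_{\id_\FSN})^\dag$, $\eta$, $\eta^\dag$, $\hat\mu$, $\injAsA$ and $(\injAsA)^\dag$ constructed above. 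Since $\cat{\Omega_{H'}}$ is freely generated subject to Khovanov's relations, $C$ extends to a 2-functor exactly when the images of these generators satisfy the image of every defining relation.

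The body of the argument is therefore a relation-by-relation verification, and each relation has already been matched to a lemma. The categorified commutation relation together with the remaining biproduct-type axioms --- Khovanov's `circle equals identity' and the mixed-orientation crossing relation --- are the $\dag$-biproduct equations~\eqref{eq:biproduct1}--\eqref{eq:biproduct5} of Lemma~\ref{lem:ccrbiproduct}. The four zig-zag equations~\eqref{eq:snakeequations} are supplied by the ambidextrous adjunction between $A$ and $A^\dag$ of Corollary~\ref{lemma:ambiadj}. The same-orientation Reidemeister-II relations and the Yang--Baxter relation for like-oriented strands follow from the symmetric group actions of Lemma~\ref{lemma:symgrpaction}, which show that the $\hat\mu$ generators assemble into honest $S_n$-actions. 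Finally, the `left twist equals zero' axiom is Lemma~\ref{lem:twist}. Taking these together, every generating relation of $\cat{H'}$ holds in the image of $C$.

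I would then attend to the 2-categorical bookkeeping that turns `$C$ respects the relations' into `$C$ is a 2-functor'. Two points require care. First, 2-morphisms of $\cat{Span(Gpd)}$ are \emph{equivalence classes} of spans of spans under~\eqref{eq:equivalencerelation}, so each verified relation must be read as an equality of classes rather than a mere isomorphism of representatives; this is exactly what the combinatorial interpretations of Section~\ref{sec:interpretation} deliver, and it is what licenses transporting Khovanov's strict graphical equalities into $\cat{Span(Gpd)}$. Second, one must confirm that the displayed list of relations is complete --- that no generating relation of $\cat{H'}$ has been left unchecked --- and that $C$ preserves identities and both horizontal and vertical composition of 2-morphisms, which is immediate from the constructions.

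The main obstacle is the coherence issue lurking behind the phrase `composition of spans'. Because the composite of two spans is formed by a weak pullback, it is associative and unital only up to canonical natural isomorphism, so $C$ is a priori a \emph{pseudofunctor} rather than a strict 2-functor: one must produce the comparison 2-cells witnessing $C(g) \circ C(f) \cong C(g \circ f)$ and check that they obey the pentagon and unit coherence axioms. This is tractable because all the relevant pullbacks are taken against $\FSN$ and are computed as the explicit iso-comma groupoids used throughout, whose universal property makes the comparison cells canonical and the coherence axioms automatic. With these comparison data in place, the relation check of the second and third paragraphs completes the verification that $C$ is a well-defined 2-functor, yielding the desired representation of Khovanov's categorified Heisenberg algebra $\cat{K}$ on the object $\FSN$.
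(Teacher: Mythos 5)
Your proposal is correct and takes essentially the same route as the paper's own proof: define $C$ on the generators via Table~\ref{table:correspondence}, then verify Khovanov's relations one by one using exactly the same four ingredients --- the $\dag$-biproduct equations of Lemma~\ref{lem:ccrbiproduct} for the mixed-orientation crossings and circle relation, the ambiadjunction of Corollary~\ref{lemma:ambiadj} for the zig-zag equations, the symmetric group actions of Lemma~\ref{lemma:symgrpaction} for like-oriented crossings, and Lemma~\ref{lem:twist} for the left twist. Your final paragraph on pseudofunctor coherence for weak-pullback composition makes explicit a bookkeeping point that the paper's proof treats as automatic ("by construction it will automatically respect composition, identities and the monoidal structure"), but this is a refinement of presentation rather than a different argument.
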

\begin{proof}
  Most of the necessary facts have already been proved. The functor
  takes the down- and up-arrows in the graphical notation for
  $\cat{H'}$ to the spans $A$ and $A^{\dag}$ respectively, which
  determines all products. The cups, caps, and crossings are taken to
  those spans of spans which we have previously described. These also
  determine all monoidal products of these 2-morphisms.

  Since the functor is defined by its action on a generating set of
  objects and morphisms in~$\cat {H'}$, by construction it will
  automatically respect composition, identities and the monoidal
  structure, provided it is well-defined. In particular, one must
  verify that the spans of spans we have defined satisfy the relations
  imposed on the graphical notation for $\cat{H'}$, since otherwise
  the definition might give different spans of spans for equivalent
  2-morphisms in~$\cat{H'}$.

  The adjointness relations on $i_{\id_{\FSN}}$ and
  $(i_{\id_{\FSN}})^{\dag}$ were verified in Lemma
  \ref{lemma:ambiadj}. The relations on crossings for like-oriented
  strands are established by the existence of the symmetric group
  action shown in Lemma \ref{lemma:symgrpaction}. The crossing
  relations for strands with unlike orientations are exactly the
  biproduct equations (\ref{eq:biproduct4}) and (\ref{eq:biproduct5}).

  The remaining relations are the `left twist equals zero' relation
  and the `loop cancellation' relation. The first of these is the only
  nontrivial one, which was verified in Lemma~\ref{lem:twist}.
\end{proof}
 
\section{Functorial representations}
\label{sec:linear}

\subsection{Introduction}

Until now we have been describing a representation of the
categorified Heisenberg algebra \cat {H'} on the groupoid $\FSN$ of finite sets as an object in $\SiiG$.
It is more usual to look for representations of categorified
algebras in $\cat{Cat}$, the 2-category of categories, functors, and
natural transformations \cite{mazorchuk}. Often, one asks further that
the categories be equipped with extra structure, as with abelian,
additive, or triangulated categories, and functors are required
preserve this structure.

In this section, we show two ways to get representations by
functors. First, we take a detour through a somewhat different
representation by functors on a category, which relates to earlier
work on groupoidification.

The key point is that the combinatorial representation in $\SiiG$ acts as the `combinatorial core' of these functorial representation. This is precisely in the spirit of the
groupoidification program which inspired the structures in
$\SiiG$. The bulk of this Section will explain these, and in
particular will demonstrate how to recover one which is equivalent to
the representation described by Khovanov in terms of bimodules~\cite{khovanov}, which also includes certain `symmetrizer'
2-morphisms.

\subsection{Stuff types}

It was remarked in the introduction that the combinatorial
representation of the categorified Heisenberg algebra extends the
Baez-Dolan groupoidification of the Fock space representation
\cite{finfeyn, mstuff}. In that situation, the spans $A$ and
$A^{\dag}$ act by weak pullback on \textit{stuff types}. A stuff type
consists of a groupoid $X$ `over $\FSN$'; that is, equipped with a
functor $\cat{X} \sxto{\Psi} \FSN$. A stuff type can be
interpreted as a generalized class of structures on finite sets. They
form the objects of the `over category' $\cat{Gpd}/\FSN$, whose
morphisms from $(X,\Psi)$ to $(X',\Psi')$ are maps $f : X \ra X'$
forming commuting triangles, so that $\Psi ' \circ f = \Psi$.

The over category $\cat{Gpd}/\FSN$ is taken in \cite{finfeyn, mstuff}
as a \textit{groupoidification} of Fock space, and the
degroupoidification of a stuff type gives a particular vector in Fock
space. This is sufficient to represent the 1-category
$\cat{Span_1(Gpd)}$, but not quite enough to represent the
2-morphisms. However, a closely related class of representations
follows immediately from the one we have described.

\begin{corollary}\label{cor:rep-on-hom}
  The representation on $\FSN$ in $\cat{Span(Gpd)}$ induces
  representations by functors and natural transformations on the
  categories $\Hom_{\cat{Span(Gpd)}}(\cat{G},\FSN)$.
\end{corollary}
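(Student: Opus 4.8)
The plan is to realise these representations by post-composing the 2-functor $C$ of Theorem~\ref{thm:mainthm} with a covariant representable 2-functor out of $\cat{Span(Gpd)}$, so that the argument reduces to the 2-categorical Yoneda construction together with a verification of the conditions peculiar to $\cat{Span(Gpd)}$. No new relations need to be checked: all of the hard combinatorial content is already contained in Theorem~\ref{thm:mainthm}.

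First I would fix a tame groupoid $\cat{G}$ and consider the covariant representable assignment $\Hom_{\cat{Span(Gpd)}}(\cat{G},-)$. This sends an object $\cat{H}$ to the hom-category $\Hom_{\cat{Span(Gpd)}}(\cat{G},\cat{H})$, a 1-morphism $\cat{H} \sxto{A} \cat{H}'$ to the post-composition functor $A \circ (-)$, and a 2-morphism $\alpha \colon A \Rightarrow B$ to the natural transformation whose component at a span $f$ is the whiskering $\alpha \circ \id_f$. Concretely, an object of $\Hom_{\cat{Span(Gpd)}}(\cat{G},\FSN)$ is a span $\FSN \sxlto{} \cat{Y} \sxto{} \cat{G}$; the span $A$ acts on it by the weak-pullback composite of Section~\ref{sec:composingspans}, and a span of spans acts by horizontal composition. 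I would verify that this is a (pseudo)functor $\cat{Span(Gpd)} \to \cat{Cat}$: functoriality on 1-morphisms holds up to the coherent associativity isomorphisms supplied by the universal property of the weak pullback, while functoriality on 2-morphisms is the interchange law.

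Composing with $C$ then produces, for each tame $\cat{G}$, a functor
\begin{equation*}
\cat{\Omega_{H'}} \sxto{C} \cat{Span(Gpd)} \sxto{\Hom_{\cat{Span(Gpd)}}(\cat{G},-)} \cat{Cat},
\end{equation*}
sending the single object of $\cat{\Omega_{H'}}$ to the category $\Hom_{\cat{Span(Gpd)}}(\cat{G},\FSN)$, the generating morphisms $Q_-$ and $Q_+$ to the endofunctors $A \circ (-)$ and $A^{\dag} \circ (-)$, and the cups, caps and crossings to the induced natural transformations. This is precisely a representation of $\cat{H'}$ by functors and natural transformations on $\Hom_{\cat{Span(Gpd)}}(\cat{G},\FSN)$; the defining relations of $\cat{H'}$ are automatically respected, since $C$ is a 2-functor and the representable is (pseudo)functorial, so their composite carries the relations of $\cat{\Omega_{H'}}$ to identities of natural transformations.

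The main obstacle is purely book-keeping about the structure of $\cat{Span(Gpd)}$ rather than any fresh mathematical content. I would need to confirm that post-composition by the fixed tame and cotame spans $A$ and $A^{\dag}$ sends tame, cotame spans $\cat{G} \to \FSN$ again to tame, cotame spans, so that the functor genuinely lands in the stated hom-category; this is already guaranteed by the fact that composition of 1-morphisms is well-defined in $\cat{Span(Gpd)}$. I would also check that whiskering respects the equivalence relation~\eqref{eq:equivalencerelation} on spans of spans, which follows by whiskering the equivalence data $(U,\sigma,\tau)$ of a given equivalence. The only real choice is whether to present the outcome as a pseudofunctor into $\cat{Cat}$ or to strictify it; either suffices for a representation by functors and natural transformations, and the coherence isomorphisms throughout come from the universal property of the weak pullback.
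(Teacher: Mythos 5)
Your proposal is correct and takes essentially the same route as the paper: the paper's proof simply curries the composition map $\Hom_{\cat{Span(Gpd)}}(\cat{G},\FSN) \times \Hom_{\cat{Span(Gpd)}}(\FSN,\FSN) \to \Hom_{\cat{Span(Gpd)}}(\cat{G},\FSN)$ into a functor $\Hom_{\cat{Span(Gpd)}}(\FSN,\FSN) \to \mathrm{End}_{\cat{Cat}}\bigl(\Hom_{\cat{Span(Gpd)}}(\cat{G},\FSN)\bigr)$ and composes it with the representation of Theorem~\ref{thm:mainthm}, which is exactly your representable 2-functor $\Hom_{\cat{Span(Gpd)}}(\cat{G},-)$ restricted to the endo-hom-category of $\FSN$. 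Your additional bookkeeping (tameness preservation, whiskering respecting the equivalence relation, pseudofunctoriality) only makes explicit what the paper subsumes under the well-definedness of composition in $\cat{Span(Gpd)}$.
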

\begin{proof}
  In the 2-category $\cat{Span(Gpd)}$, there are composition maps
  \begin{equation}
    \circ : \Hom(\cat{G},\FSN) \times \Hom(\FSN,\FSN) \ra \Hom(\cat{G},\FSN)
  \end{equation}
  This amounts to a functor:
  \begin{equation}
    \circ(-) : \Hom(\FSN,\FSN) \ra \Hom ( \Hom(\cat{G},\FSN) , \Hom(\cat{G},\FSN) ) 
  \end{equation}
  Note that the outer $\Hom$ on the right hand side is in $\cat{Cat}$,
  and the inner $\Hom$ operations are in $\cat{Span(Gpd)}$. which takes
  any 1-morphism from $\FSN$ to itself to the operation which acts by
  composition with that 1-morphism. Similarly, $\circ(-)$ takes a
  2-morphism in $\Hom(\FSN,\FSN)$ to a natural transformation between
  these functors. Composing this with the representation of Theorem \ref{thm:mainthm} gives the result.
\end{proof}

The first part of this argument would work in any 2-category, and does
not depend on any details about $\cat{Span(Gpd)}$. As a special case,
we have an action on a category whose objects correspond directly to
stuff types.

\begin{corollary}
  There is a representation of $\cat{H'}$ by functors and natural
  transformation on the category 
  \begin{equation}
    \Hom_{\cat{Span(Gpd)}}(\cat{1},\FSN) \cong \cat{Span(Gpd/\FSN)}
  \end{equation}
\end{corollary}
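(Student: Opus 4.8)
The plan is to derive the statement as the case $\cat G = \cat 1$ of Corollary~\ref{cor:rep-on-hom}, combined with an identification of the relevant hom-category. Setting $\cat G = \cat 1$ in that corollary, whose proof is valid for an arbitrary groupoid $\cat G$, immediately furnishes a representation of $\cat{H'}$ by functors and natural transformations on $\Hom_{\cat{Span(Gpd)}}(\cat 1, \FSN)$. It therefore remains only to justify the equivalence
\[
\Hom_{\cat{Span(Gpd)}}(\cat 1, \FSN) \;\simeq\; \cat{Span(Gpd/\FSN)}
\]
and to transport the representation across it.

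First I would establish the correspondence on objects. A $1$-morphism $\cat 1 \to \FSN$ is a span $\cat 1 \sxlto{} \cat X \sxto{\Psi} \FSN$, and since $\cat 1$ is terminal in $\Gpd$ its left leg is uniquely determined. The datum is thus exactly a stuff type $(\cat X, \Psi)$, an object of the slice $\Gpd/\FSN$, which is by definition an object of $\cat{Span(Gpd/\FSN)}$; this gives a bijection between the objects of the two categories.

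Next I would treat the morphisms. A $2$-morphism between two such spans is an equivalence class of spans of spans $\cat X \sxlto{S} \cat Z \sxto{T} \cat Y$ carrying natural transformations as in~\eqref{eq:2spandiag}. Here the transformation valued in $\cat 1$ is forced to be trivial, while the remaining transformation $\nu \colon \Psi \circ S \Rightarrow \Phi \circ T$ is automatically invertible because $\FSN$ is a groupoid. Reading $\nu$ as the comparison isomorphism of a triangle over $\FSN$ exhibits $(\cat Z, \Psi \circ S)$ together with $S$ and $T$ as a span in the slice; conversely every such span arises this way, and the equivalence relation~\eqref{eq:equivalencerelation} on spans of spans corresponds to isomorphism of spans in the slice. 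I would note here that, in order to record the isomorphism $\nu$, one must read $\Gpd/\FSN$ in the weak sense $\Gpd \wquot \FSN$; the strict and weak slices agree up to equivalence in this groupoidal setting, so no generality is lost.

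The step I expect to require the most care is verifying compatibility with composition at the level of equivalence classes. Composition of spans of spans is computed by the weak pullback of~\eqref{eq:composespans} in $\Gpd$, whereas composition of spans in $\Gpd \wquot \FSN$ is computed by weak pullback in the slice; since the forgetful functor $\Gpd \wquot \FSN \to \Gpd$ creates weak pullbacks, the two constructions coincide, and one need only track the structure functors to $\FSN$ and the comparison isomorphisms through the universal property. Because every groupoid enters only through invertible transformations, the attendant coherence conditions hold automatically, and the tameness and cotameness conditions bounding both sides are phrased via preimages and groupoid cardinalities that the correspondence respects. Hence the identification is a genuine equivalence of categories, and transporting the representation of Corollary~\ref{cor:rep-on-hom} along it yields the claimed representation of $\cat{H'}$ on $\cat{Span(Gpd/\FSN)}$.
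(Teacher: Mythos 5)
Your route is the same as the paper's: specialize Corollary~\ref{cor:rep-on-hom} to $\cat G = \cat 1$, identify the objects of the hom-category with stuff types via terminality of $\cat 1$, and identify the 2-morphisms with isomorphism classes of spans over the slice. Indeed, your treatment of the 2-morphism level is more explicit than the paper's one-sentence assertion, and your observation that the comparison datum $\nu$ forces a \emph{weak} reading of the slice is exactly the point the paper glosses over.

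However, the claim you use to reconcile this with the paper's literal definition --- that ``the strict and weak slices agree up to equivalence in this groupoidal setting, so no generality is lost'' --- is false, and this step fails concretely. Take two distinct one-element sets $\{a\} \neq \{b\}$, which are isomorphic objects of $\FSN$, and let $\Psi, \Phi \colon \cat 1 \to \FSN$ be the stuff types picking them out. In $\Hom_{\cat{Span(Gpd)}}(\cat 1, \FSN)$ these are isomorphic objects: the span of spans $\cat 1 \sxlto{\id} \cat 1 \sxto{\id} \cat 1$, equipped with the 2-cell over $\FSN$ given by the unique bijection $\{a\} \to \{b\}$ (the 2-cell over $\cat 1$ being trivial), is invertible, its inverse carrying the inverse bijection; composing the two gives the weak pullback $\cat 1$ with identity 2-cells, i.e.\ the identity 2-morphism. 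But in the strict slice $\Gpd/\FSN$ as the paper defines it (triangles with $\Psi' \circ f = \Psi$ on the nose), any span $(\cat Z, S, T)$ from $(\cat 1,\Psi)$ to $(\cat 1,\Phi)$ must satisfy $\Psi \circ S = \Phi \circ T$, which evaluated at any object of $\cat Z$ asserts $\{a\} = \{b\}$; hence $\cat Z = \emptyset$, the only morphism between these objects is the empty span, and they are \emph{not} isomorphic in the strictly-read $\cat{Span(Gpd/\FSN)}$. So strictification of the comparison triangle is genuinely obstructed (it would require the structure functors to be isofibrations), and no equivalence compatible with the stuff-type identification of objects can exist between the strict-slice span category and the hom-category. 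The repair is simply to delete the false bridging claim and state the identification --- which your main argument in fact proves --- for the weak slice $\Gpd \wquot \FSN$; this is also what the paper must intend for its own statement to be true, since its proof silently passes over the very point you tried to patch.
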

\begin{proof}
  The existence of such a representation on $\Hom_{\cat{Span(Gpd)}}(\cat{1},\FSN)$ is just a special case of Corollary \ref{cor:rep-on-hom}.

  The objects of this category are in 1-1 correspondence with stuff
  types, since any groupoid $X$ is equipped with a unique functor into
  $\cat{1}$, so a span from $\cat{1}$ to $\FSN$ determined uniquely by
  a stuff type. However, the morphisms between these spans are not
  just the maps of stuff types, but rather isomorphism classes of
  spans of such maps. That is, the morphisms of this category
  correspond exactly to those of $\cat{Span(Gpd/\FSN)}$, in a way
  which agrees with units and composition.
\end{proof}

\subsection{Linearization of Spans}

We will now consider a different way to get representations of
$\cat{K}$ on a category by functors and natural transformations,
which is also related to the Baez-Dolan groupoidification program, but
in a different way.

Linearization is the process of turning geometrical structures into
linear ones. An important form of linearization for our purposes is
\emph{degroupoidification}, first described in~\cite{finfeyn} and
surveyed in~\cite{hdavii}, which is a monoidal functor
\[
\cat{Span_1(Gpd)} \sxto D \cat{Vect},
\]
where \cat{Span_1(Gpd)} is the monoidal 1\-category obtained by
identifying isomorphic 1\-cells in the monoidal 2\-category
\cat{Span(Gpd)}. This functor $D$ takes groupoids to the free vector
space on their set of equivalence classes of objects, and spans of
groupoids to linear maps between these. Tameness properties of the spans are crucial for this functor $D$\ to be well-defined.

We will use a higher-categorical generalization of this called
\emph{2\-linearization}, which is a monoidal 2\-functor
\begin{equation}
\label{eq:lambda}
\cat{Span(Gpd)} \sxto \Lambda \cat{2Vect},
\end{equation}
where \cat{2Vect} is the monoidal 2\-category of
\textit{Kapranov-Voevodsky 2\_vector spaces}. This has objects given
by $\mathbb{C}$\-linear semisimple additive categories, 1\-morphisms
given by linear functors, and 2\_morphisms given by natural
transformations. Again, tameness properties of $\SiiG$ as described in Section~\ref{sec:forming2cat} are necessary for this construction to be valid.

The 2-functor $\Lambda$ is described in more detail
in~\cite{gpd2vect}, but the essential starting point is that $\Lambda$
takes a groupoid $\cat A$ to its category $\Rep(\cat A)$ of
finite-dimensional representations. A span of groupoids \mbox{$\cat B
  \sxlto G \cat X \sxto F \cat A$} is mapped to the composite functor
\begin{equation}
  \Rep(\cat A) \sxto{F_*} \Rep(\cat X) \sxto{G ^*} \Rep(\cat B),
\end{equation} 
where $F_*$ is the pullback functor (also known as the restriction
functor) for $F$, and $G^*$ is the adjoint of the pullback functor
(also known as the induction functor) for~$G$.

For a general span-of-spans of the form
\[
(\cat B \sxlto G \cat X \sxto F \cat A) \sxto {(\cat X \sxlto{S} \cat Z \sxto T \cat Y, \mu,\nu)} (\cat B \sxlto J \cat Y \sxto K \cat A),
\]
we construct its image under $\Lambda$ as the following composite natural transformation:
\begin{equation}
\begin{aligned}
\begin{tikzpicture}[xscale=4, yscale=2]
\node (X) at (0,0) {$\Rep(\cat X)$};
\node (Y) at (0,-2) {$\Rep (\cat Y)$};
\node (Z) at (0,-1) {$\Rep (\cat Z)$};
\node (B) at (-1,-1) {$\Rep (\cat B)$};
\node (A) at (1,-1) {$\Rep(\cat A)$};
\draw [->] (X) to node [auto, swap] {$G^*$} (B);
\draw [<-] (X) to node [auto] {$F_*$} (A);
\draw [<-] (Y) to node [auto, swap] {$K_*$} (A);
\draw [->] (Y) to node [auto] {$J^*$} (B);
\draw [<-] (Z) to [out=70, in=-70]
        node [right, inner sep=0pt]
        {$S_*$} (X);
\draw [->] (Z) to [out=110, in=-110]
        node [left, inner sep=0pt]
        {$S^* _{\vphantom{*}}$} (X);
\draw [<-] (Z) to [out=-70, in=70] node [right, inner sep=0pt] {$T_*$} (Y);
\draw [->] (Z) to [out=-110, in=110] node [left, inner sep=0pt] {$T^* {{}_{\vphantom{*}}}$} (Y);
\draw [->, double] (0.4,-0.7) to
    node [right, pos=0.5] {$\nu _*$}
    (0.4,-1.3);
\draw [->, double] (-0.6,-0.7) to
    node [right, pos=0.5] {$(\mu^*) ^{-1}$}
    (-0.6,-1.3);
\draw [->, double] ([yshift=-0.3cm] X.center) to
    ([yshift=0.3cm] Z.center);
\draw [->, double] ([yshift=-0.3cm] Z.center) to
    ([yshift=0.3cm] Y.center);
\end{tikzpicture}
\end{aligned}
\end{equation}
The central 2\-cells arise from adjunctions $S_* \dashv S^*$ and $T^*
\dashv T_*$. Notice that this makes essential use of the 2-sidedness
of the adjunctions between the restriction and induction functors
denoted with upper and lower stars respectively.

In Section~\ref{sec:khovanov} we described Khovanov's construction of
the categorified Heisenberg algebra as a monoidal category
\cat{H'}. In Section~\ref{sec:groupoidification}, we described how
this acts in a natural way by spans and spans-of-spans over the
groupoid $\FSN$ of finite sets and bijections, yielding a
combinatorial interpretation that comprises the definition of a 2-functor
\begin{equation}
\cat{\Omega_{H'}} \sxto C \cat{Span(Gpd)},
\end{equation}
We can compose $C$ and $\Lambda$ to
obtain a new representation of \cat{\Omega_{H'}} in \cat{2Vect}. In fact, this composite representation $\Lambda \circ C$ is exactly Khovanov's original representation as described in~\cite{khovanov}, up to completeness issues which we below.
\begin{corollary}
\label{cor:composite}
The composite 2-functor
\begin{equation}
\label{eq:composite2functor}
\cat{H'} \sxto C \cat{Span(Gpd)} \sxto \Lambda \cat{2Vect}
\end{equation}
determines a representation of \cat K on $\FV(\FSN)$ by functors and natural
transformations.
\end{corollary}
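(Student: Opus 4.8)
The plan is to recognize that the statement is a formal consequence of composing two 2-functors that have each already been constructed, so almost all of the work is behind us. By Theorem~\ref{thm:mainthm}, the combinatorial interpretation assembles into a well-defined 2-functor $C \colon \cat{\Omega_{H'}} \to \cat{Span(Gpd)}$, sending the unique object to $\FSN$, the generating 1-morphisms to the spans $A$ and $A^\dag$, and the cups, caps and crossings to the spans of spans $\eta$, $\epsilon$, $\hat\mu$, $\injAsA$ and their converses. The 2-linearization 2-functor $\Lambda \colon \cat{Span(Gpd)} \to \cat{2Vect}$ of~\eqref{eq:lambda}, constructed in detail in~\cite{gpd2vect}, is likewise already available. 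First I would simply form the composite $\Lambda \circ C$ and invoke the fact that a composite of 2-functors is again a 2-functor; since a 2-functor out of the one-object 2-category $\cat{\Omega_{H'}}$ is by definition a representation of the monoidal category $\cat{H'}$, that is of $\cat K$, this already produces the desired representation.

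Second, I would confirm that $C$ really lands in the part of $\cat{Span(Gpd)}$ on which $\Lambda$ is defined, so that the composition is legitimate. This is immediate from the construction of $\cat{Span(Gpd)}$ in Section~\ref{sec:forming2cat}: its objects are tame groupoids, its 1-morphisms tame and cotame spans, and its 2-morphisms tame and cotame spans of spans, which are exactly the finiteness hypotheses under which $\Lambda$ is well-defined. Since $\FSN$ is tame (its groupoid cardinality is $\sum_n 1/n! = e$) and the spans $A$, $A^\dag$ together with all the spans of spans produced by $C$ satisfy the required conditions, every cell in the image of $C$ lies in the domain of $\Lambda$.

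Third, I would unwind the composite to justify the phrase ``by functors and natural transformations''. The single object is sent to $\FV(\FSN) \simeq \Rep(\FSN) \simeq \coprod_n \Rep(S_n)$, the categorified Fock space. Because the 1-morphisms of $\cat{2Vect}$ are linear functors and its 2-morphisms are natural transformations, the images $\Lambda(A)$ and $\Lambda(A^\dag)$ are the restriction and induction functors on $\FV(\FSN)$, and the images of the cups, caps and crossings are natural transformations between the corresponding composites; this is precisely a representation of $\cat K$ on $\FV(\FSN)$ by functors and natural transformations.

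The content here is genuinely light, and I expect no real obstacle within the corollary itself: the substantive work was the well-definedness of $C$ (Theorem~\ref{thm:mainthm}) and the construction of $\Lambda$. The only point needing care is the compatibility of the two domains of definition, namely tameness, and this has been built into the definition of $\cat{Span(Gpd)}$. The genuinely delicate comparison --- that $\Lambda \circ C$ reproduces \emph{Khovanov's} bimodule representation exactly --- is a separate issue involving the idempotent completion of $\cat{H'}$, and belongs to the discussion following the corollary rather than to its proof.
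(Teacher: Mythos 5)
Your overall strategy --- form the composite $\Lambda \circ C$ and note that a composite of 2\-functors is a 2\-functor --- is exactly the route the paper intends; indeed the paper says the corollary ``would be immediate'' on precisely these grounds. The genuine gap is in your second step, the domain-compatibility check, which happens to be the only point of real substance in this corollary. You assert that the tameness conditions built into the definition of $\cat{Span(Gpd)}$ are ``exactly the finiteness hypotheses under which $\Lambda$ is well-defined.'' That is not so: the 2\-linearization $\Lambda$ of~\cite{gpd2vect} is constructed only for \emph{essentially finite} groupoids, and $\FSN$, while tame (finite hom-sets and groupoid cardinality $\sum_n 1/n! = e$), is \emph{not} essentially finite --- it has infinitely many isomorphism classes of objects. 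Tameness and essential finiteness are distinct conditions, and it is the latter that the existing construction of $\Lambda$ requires. So as you have argued it, the composite $\Lambda \circ C$ is not yet legitimately defined on the object $\FSN$, which is where your argument would fail.

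The paper closes this gap by extending $\Lambda$: one regards $\FSN$ as the colimit of the essentially finite groupoids $\cat{FS}_n$ (finite sets with at most $n$ elements) along the natural inclusions --- technically an Ind-object of finite groupoids --- and extends $\Lambda$ to such colimits, with values in Ind-objects of $\cat{2Vect}$. Applied to $\FSN$, this yields the category of all finite-dimensional representations of $\FSN$, that is, finite direct sums of irreducible representations of the various $\Sn$, which is the $\FV(\FSN)$ appearing in the statement. The extension is then justified by the observation that any particular calculation with the extended $\Lambda$ takes place at some finite stage $\cat{FS}_n$, just as every object of $\FSN$ lies in some $\cat{FS}_n$. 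Your third step, identifying the images of $A$, $A^\dag$ and the generating 2\-morphisms as functors and natural transformations on $\coprod_n \Rep(\Sn)$, is fine once this extension is in place; but without it the corollary is not even well-posed, so the extension argument must be part of the proof rather than omitted.
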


\noindent
This Corollary~\ref{cor:composite} would be immediate, except that $\Lambda$ was defined
only for essentially finite groupoids, whereas $\FSN$ is a \textit{union} of
essentially finite groupoids. In particular, it is a colimit of a
diagram of groupoids $(\cat{FS}_n)$, whose $n^{\mathrm{th}}$ member is the
groupoid of all finite sets with at most $n$ elements, with all the
natural inclusions. Thus Corollary~\ref{cor:composite} requires that $\Lambda$
be extended to groupoids of this kind (technically, Ind-objects for
finite groupoids) for the statement to be well-defined.

The extension of $\Lambda$, applied to $\FSN$, should thus give an
object of a category whose objects are again a colimit of finite
dimensional 2-vector spaces (Ind-objects for $\cat{2Vect}$). This is
exactly analogous to the case of an infinite direct sum of vector
spaces. So it consists of all finite-dimensional representations of
$\FSN$ (that is, finite direct sums of any of the irreducible
representations of any of the $\Sn$).

We will gloss over much discussion of this issue, since any such
object will eventually appear in the image $\Lambda(\cat{FS}_n)$ for
sufficiently large $n$, just as any object of $\FSN$ eventually
appears in sufficiently large $\cat{FS}_n$. Thus, any calculation we
might want to make with the extended $\Lambda$ will actually be a
calculation in some finite stage $\cat{FS}_n$.

\subsection{2-Linearized ladder operators}

We now investigate the images of the spans $A$ and
$A^{\dagger}$. This will make it clear why the
composite~\eqref{eq:composite2functor} is isomorphic to Khovanov's
representation.

To begin with, the 2-vector space analog of `Fock space' is:
\begin{equation}
  \FV(\FSN) = \HV{\FSN} \simeq \coprod_{n \in \mathbb{N}} \mathrm{Rep}(\Sn)
\end{equation}
This is a category generated by the irreducible representations of all
the symmetric groups $\Sn$.  The categorified Heisenberg algebra acts
on $\FV(\FSN)$ by endofunctors and natural transformations, and we
would like to understand this action concretely.

For each isomorphism class of object $n \in \FSN$, the irreducible representations of the group \mbox{$\mathrm{Aut}(n) = \Sn$} are indexed by Young diagrams with $n$ boxes, so there is a countable basis for $\FV(\FSN)$ consisting of all Young diagrams. See~\cite{youngtab} for the mechanics of Young diagrams in representation theory the symmetric groups, and~\cite{sternberg} for a good discussion in a physical context.

The 1\-cell $\Lambda(A ^\dagger)$ is given by the functor
\begin{equation}
\Lambda(\FSN) \sxto{\Lambda(+1)} \Lambda(\FSN),
\end{equation}
and $\Lambda(A)$ by its adjoint. For any $n$-element set, the functor
$(+1)$ maps the automorphism group $\Sn$ into $\Sni$, by the natural
inclusion where $\Sn$ is the subgroup of $\Sni$ fixing the newly-added
element.

To understand $\Lambda(A ^\dagger)$ we must therefore see how a
representation of $\FSN$ is pulled back by the functor $(+1)$, and to
understand $\Lambda(A)$ we must see how the adjoint of this pullback
operation acts. This is described by the theory of restricted and
induced representations, a standard part of the representation theory
of the symmetric groups~\cite{sagan}. Given an irreducible
representation of $\Sni$ represented by a Young diagram $D$, the
corresponding restricted representation of $\Sn$ will be a direct sum
of irreducibles, which correspond to all the valid Young diagrams
obtained by deleting a single box from some row of $D$.  Each
representation corresponding to such a diagram appears in the
restricted representation with multiplicity one. Likewise, given an
irreducible representation of $\Sn$ represented by Young diagram $D'$,
the induced representation of $\Sni$ is a representation in which
every Young diagram arising by adding a box to any row of $D'$
(possibly of zero length) appears with multiplicity one.

We can see the effect of $\FV(A)$ and $\FV(A^{\dagger})$ on small
representations of small symmetric groups using the following directed
graph of Young diagrams, which is known as Young's lattice:

\begin{equation}\label{eq:ydgraph}
{\tiny{\xy
(0,45)*+{\mathbb{C}}="0";
(0,30)*+{\yng(1)}="1";
(-10,15)*+{\yng(2)}="2";
(10,15)*+{\yng(1,1)}="11";
(-15,0)*+{\yng(3)}="3"; 
(0,0)*+{\yng(2,1)}="21"; 
(15,0)*+{\yng(1,1,1)}="111";
(-30,-15)*+{\yng(4)}="4"; 
(-15,-15)*+{\yng(3,1)}="31"; 
(0,-15)*+{\yng(2,2)}="22"; 
(15,-15)*+{\yng(2,1,1)}="211"; 
(30,-15)*+{\yng(1,1,1,1)}="1111";
(-45,-30)*+{\yng(5)}="5"; 
(-30,-30)*+{\yng(4,1)}="41"; 
(-15,-30)*+{\yng(3,2)}="32"; 
(0,-30)*+{\yng(3,1,1)}="311"; 
(15,-30)*+{\yng(2,2,1)}="221"; 
(30,-30)*+{\yng(2,1,1,1)}="2111"; 
(45,-30)*+{\yng(1,1,1,1,1)}="11111";
{\ar "0";"1"};
{\ar "1";"2"};
{\ar "1";"11"};
{\ar "2";"3"};
{\ar "2";"21"};
{\ar "11";"21"};
{\ar "11";"111"};
{\ar "3";"4"};
{\ar "3";"31"};
{\ar "21";"31"};
{\ar "21";"22"};
{\ar "21";"211"};
{\ar "111";"211"};
{\ar "111";"1111"};
{\ar "4";"5"};
{\ar "4";"41"};
{\ar "31";"41"};
{\ar "31";"32"};
{\ar "31";"311"};
{\ar "22";"32"};
{\ar "22";"221"};
{\ar "211";"221"};
{\ar "211";"311"};
{\ar "1111";"2111"};
{\ar "211";"2111"};
{\ar "1111";"11111"};
\endxy}}
\end{equation}
Given any diagram in this partial order, the effect of $\FV(A)$ is to
give the direct sum of every diagram immediately above it, and the
effect of $\FV(A^{\dagger})$ is to give the direct sum of every
diagram immediately below it.  As a result, the effect of multiple
applications of $\FV(A)$ and $\FV(A^{\dagger})$ can be calculated by
counting paths.

We can represent this graph in the following way:
\begin{equation}\label{eq:FVA}
\FV(A) = 
\begin{pmatrix}
0 & M_{0,1} & 0 & 0 & 0 & 0 & \dots \\
0 & 0 & M_{1,2} & 0 & 0 & 0 & \dots \\
0 & 0 & 0 & M_{2,3} & 0 & 0 & \dots \\
0 & 0 & 0 & 0 & M_{3,4} & 0 & \dots \\
0 & 0 & 0 & 0 & 0 & M_{4,5} & \dots \\
\vdots & \vdots & \vdots & \vdots & \vdots & \vdots & \ddots
\end{pmatrix}
\end{equation}
Entries in this matrix are themselves matrices, the first nontrivial
elements having the following forms:
\begin{equation}
M_{0,1} =
\begin{pmatrix}
\C
\end{pmatrix}
\end{equation}
\begin{equation}
M_{1,2} =
\begin{pmatrix}
\C & \C
\end{pmatrix}
\end{equation}
\begin{equation}
M_{2,3} =
\begin{pmatrix}
\C & \C & 0 \\
0 & \C & \C
\end{pmatrix}
\end{equation}
\begin{equation}
M_{3,4} =
\begin{pmatrix}
\C & \C & 0 & 0 & 0 \\
0 & \C & \C & \C & 0 \\
0 & 0 & 0 & \C & \C
\end{pmatrix}
\end{equation}
\begin{equation}
M_{4,5} =
\begin{pmatrix}
\C & \C & 0 & 0 & 0 & 0 & 0 \\
0 & \C & \C & \C & 0 & 0 & 0 \\
0 & 0 & \C & 0 & \C & 0 & 0 \\
0 & 0 & 0 & \C & \C & \C & 0 \\
0 & 0 & 0 & 0 & 0 & \C & \C \\
\end{pmatrix}
\end{equation}
We write these matrices using the basis order suggested by the graph
(\ref{eq:ydgraph}). The analogous matrices for $\FV(A^{\dagger})$ are
the transpose of these.

The commutation relation
\begin{equation}
\FV(A) \FV(A^{\dagger}) \oplus \mathbf{1} \cong \FV(A^{\dagger})\FV(A)
\end{equation}
must be true in \cat{2Vect}, since $\Lambda$ preserves all the
relevant structures, and since we have already demonstrated the
corresponding combinatorial fact~\eqref{eq:formalcommutationrelation}
about the composition of spans.  This fact can be directly verified
for actions on small symmetric groups using the matrices given above. In terms of diagrams, the extra factor of
$\mathbf{1}$ on the left-hand side corresponds to the operation of first adding a box after
the last row of a diagram $D$, then removing it.

This commutation relation recalls that the 2-linear maps given by the
$M_{i,i+1}$ and their adjoints are only the generators of the
categorified Heisenberg algebra. In physical applications, derived
operators are also very important. In particular there is the `number operator' $\mathbf{n}=\A^{\dagger}\A$. In the physical application of the
Heisenberg algebra to the quantum harmonic oscillator, this is the Hamiltonian of the system. It is a self-adjoint operator whose
eigenvalues correspond to the observable values of `energy', which is just the number of particles in a particular state of
the system.

With this physical motivation in mind, it is interesting to consider
the analogous span $N=A^{\dagger}A$, and its 2-linearization:
\begin{equation}
\FV(N) = \FV(A^{\dagger}) \FV(A) =
\begin{pmatrix}
N_0 & 0 & 0 & 0 & 0 & \dots \\
0 & N_1 & 0 & 0 & 0 & \dots \\
0 & 0 & N_2 & 0 & 0 & \dots \\
0 & 0 & 0 & N_3 & 0 & \dots \\
0 & 0 & 0 & 0 & N_4 & \dots \\
\vdots & \vdots & \vdots & \vdots & \vdots & \dots
\end{pmatrix}
\end{equation}
In place of the eigenvalues $n$ of the operator
$\mathbf{n}$, we have functors given by blocks such as the following:
\begin{equation}
N_4 =
\begin{pmatrix}
  \C & \C & 0 & 0 & 0 \\
  \C & \C^2 & \C & \C & 0 \\
  0 & \C & \C & \C & 0 \\
  0 & \C & \C & \C^2 & \C \\
  0 & 0 & 0 & \C & \C
\end{pmatrix}
\end{equation}
We observe here that the 2\-linear maps represented by these blocks
are endofunctors of particular subcategories of $\FV(\FSN)$. The
dimension of each entry counts paths of a certain shape
(lower-then-raise) in the Young lattice, but they are otherwise rather
opaque. However, the following observation shows that the $N_n$ are
quite natural, and indeed analogous to eigenvalues:

\begin{proposition}
There is a natural equivalence of 2\_linear maps $N_n \cong (-) \otimes \C[n]$, where $\C[n]$ is the regular representation of $\Sn$ on $\C^n$ acting by permutation of the standard basis.
\end{proposition}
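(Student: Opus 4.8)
The plan is to identify the functor $N_n$ explicitly as a composite of induction and restriction, and then to recognize the claimed tensor decomposition as an instance of the projection formula (the tensor identity). From the block description above, together with the identification of $\FV(A^\dagger) = \FV(+1)$ with the induction functor and $\FV(A)$ with its adjoint the restriction functor, the $n$th diagonal block of $\FV(N)=\FV(A^\dagger)\FV(A)$ is
\[
N_n \;=\; \mathrm{Ind}^{\Sn}_{S_{n-1}} \circ\, \mathrm{Res}^{\Sn}_{S_{n-1}},
\]
where $S_{n-1} \hookrightarrow \Sn$ is the subgroup fixing the newly-added element, exactly the inclusion that the functor $+1$ induces on automorphism groups. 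Thus the statement to prove reduces to showing that $\mathrm{Ind}^{\Sn}_{S_{n-1}}\mathrm{Res}^{\Sn}_{S_{n-1}}(-)$ is naturally isomorphic to $(-)\otimes \C[n]$.

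First I would invoke the projection formula: for any subgroup $H \le G$, any $\C[G]$-module $V$, and any $\C[H]$-module $W$, there is a natural isomorphism $\mathrm{Ind}_H^G\big(W \otimes \mathrm{Res}_H^G V\big) \cong \big(\mathrm{Ind}_H^G W\big) \otimes V$. Specializing to $W = \C$, the trivial $S_{n-1}$-representation, gives
\[
\mathrm{Ind}^{\Sn}_{S_{n-1}}\mathrm{Res}^{\Sn}_{S_{n-1}} V \;\cong\; V \otimes \mathrm{Ind}^{\Sn}_{S_{n-1}}\C .
\]
Next I would identify the fixed factor $\mathrm{Ind}^{\Sn}_{S_{n-1}}\C \cong \C[\Sn/S_{n-1}]$, the permutation module on the $n = [\Sn:S_{n-1}]$ cosets. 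Since these cosets are in canonical bijection with the $n$ points permuted by $\Sn$, this is precisely the representation $\C[n]$ on $\C^n$ by permutation of the standard basis. Combinatorially this factor of $n$ is transparent in the spirit of the present paper: a history contributing to $N$ at level $n$ removes one of the $n$ elements of an $n$-element set and then adjoins a fresh one, and the $\Sn$-set of choices of removed element is exactly this $n$-point permutation representation.

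Rather than quote the projection formula as a black box, I would exhibit the isomorphism explicitly on underlying modules as $\C[\Sn]\otimes_{\C[S_{n-1}]} V \to V \otimes \C[\Sn/S_{n-1}]$, sending $g \otimes v \mapsto (gv) \otimes gS_{n-1}$, with inverse $w \otimes gS_{n-1} \mapsto g \otimes g^{-1}w$, and check that it is well defined on the relative tensor product and $\Sn$-equivariant for the diagonal action on the target. The point I expect to require the most care — and the main obstacle — is not the existence of the isomorphism but its \emph{naturality} as an equivalence of $2$-linear maps: I must verify that this family, as $V$ ranges over $\Rep(\Sn)$, assembles into a natural transformation compatible with the $\C$-linear, biproduct-preserving structure that $\Lambda$ attaches to $N_n$, and in particular that it is coherent with the unit/counit data of the induction–restriction adjunction used to construct $\Lambda(N)$ in Section~\ref{sec:linear}. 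Since the displayed map is manifestly natural and linear in $V$, this is a routine but necessary bookkeeping check; once completed, the pointwise isomorphism upgrades to the required natural equivalence $N_n \cong (-)\otimes \C[n]$.
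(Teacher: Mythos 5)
Your proof is correct, but it takes a genuinely different route from the paper's. The paper argues entirely at the level of spans: by the combinatorial interpretation of $A^\dag \circ A$ (remove an element, then add one back), its histories over an $n$\-element set are precisely the $n$ ways of marking a chosen element, and $\Sn$ acts on these markings exactly as the permutation representation; the proposition is then read off from the 2\-linearization of this span. You instead work downstream of $\Lambda$, in representation theory proper: you identify $N_n$ as $\mathrm{Ind}^{\Sn}_{S_{n-1}}\circ\mathrm{Res}^{\Sn}_{S_{n-1}}$ (correctly ordered, since $N = A^\dag\circ A$ is remove\-then\-add, so restriction comes first), then apply the projection formula with trivial $W$, identifying $\mathrm{Ind}^{\Sn}_{S_{n-1}}\C \cong \C[\Sn/S_{n-1}] \cong \C^n$ as the permutation module; your explicit map $g\otimes v \mapsto gv \otimes gS_{n-1}$ makes well\-definedness, equivariance and naturality in $V$ transparent. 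What each approach buys: the paper's argument is shorter and stays within the combinatorial narrative it is promoting, but it leaves implicit the translation from ``the $\Sn$\-set of histories is the $n$\-point set'' to a statement about 2\-linear maps; your argument fills exactly that gap with standard, fully checkable representation theory, at the cost of leaning on the identification of $\FV(A)$ and $\FV(A^\dag)$ with restriction and induction, which the paper establishes in the preceding subsection. (Your closing worry about coherence with the unit/counit data of the adjunction is more caution than the statement requires --- a natural isomorphism of the two linear functors suffices --- but it does no harm.) The two arguments are really two sides of the same fact: the projection formula is the algebraic incarnation of the observation that marking an element of an $n$\-set can be done in exactly $n$ independent ways.
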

\begin{proof}
  This can be verified by considering the combinatorial interpretation
  of the span $A^{\dagger}A$, which is removing an element from a set
  and then adding an element to the result set - this amounts to
  marking a chosen element of the original set, which can be done in
  exactly $n$ ways for an $n$-element set. These are all independent,
  and the symmetry group $\Sn$ acts on them exactly as the permutation
  representation.
\end{proof}

We close this section with a remark about how the above picture
captures Khovanov's model of the diagram calculus in the bimodule
category $\mathcal{S}'$. The blocks $M_{n,n+1}$ each act only on representations supported on
single objects $n$. They represent induction functors such as
\newcommand\Ind{\mathrm{Ind}}
\begin{equation}
  \Ind_{n}^{n+1} = (+1)|_{\mathbf{n}} : \Rep(\Sn) \ra \Rep(\Sni),
\end{equation}
which induces a representation from $\Sn$ to $\Sni$ along the
inclusion $\Sn \subset \Sni$.  The adjoint blocks describe the
restriction functors, showing how an $\Sni$ representation restricts
to an $\Sn$ representation.

Specifically, Khovanov defined a representation of $\cat{H'}$ in terms of
bimodules, as defined by a map from the category freely defined by the
graphical calculus given earlier, into the following:

\begin{definition}[Khovanov]Let $\mathcal{S}'$ be a category defined
  as follows. The objects of $\mathcal{S}'$ are composites of
  induction and restriction functors for representations of symmetric
  groups along the standard inclusions $\Sn \subset \Sni$ (and
  monoidal structure given by composition).  The morphisms are natural
  transformations.  Then $\mathcal{S} = Kar(\mathcal{S}')$ is the
  Karoubi envelope of $\mathcal{S}'$.
\end{definition}

\noindent
This is described in terms of groups, rather than the groupoid
$\FSN$, so it is constructed as the direct sum of a collection of such categories $\mathcal{S}_n$, labeled by the starting value of $n$. However, there is a faithful embedding
\begin{equation}
  \mathcal{S}' \ra \mathrm{End}_{\iiV}(\FV(\FSN))
\end{equation}
which takes a bimodule to the functor which acts by a taking the
tensor product (over the appropriate group algebra) with that
bimodule. For simplicity, we omit the distinction between
$\mathcal{S}'$ and the isomorphic copy of it lying in $\iiV$.

A general object in $\mathcal{S}'$ represents a functor which is a
composites of sequences of blocks like the $M_{n,n+1}$ above, or their
adjoints.  Taking the direct sum gives exactly the bimodules
corresponding to the functors in $\Hom \big(\Lambda(\FSN),\Lambda(\FSN)
\big)$ which are generated in this way.  Thus, the image in $\iiV$ of
our groupoidified Heisenberg algebra corresponds exactly to this
representation $\mathcal{S'}$ of $\cat{H'}$.

\subsection{Symmetrizers and antisymmetrizers}\label{sec:symmetrizers}

The algebra categorified by Khovanov is larger than the
single-variable Heisenberg algebra described in the introduction.  It
is a more complicated but closely related algebra also known as the
Heisenberg vertex algebra, isomorphic as an ordinary algebra to the
infinitely-generated Heisenberg algebra. It has a countably infinite
family of generators $a_i$ (which commute amongst themselves) and
their adjoints (which also commute amongst themselves), which obey
the following relations:
\begin{equation}
\A^{\pdag }_i \A_j^{\dagger} - \A_j^{\dagger} \A_i ^\pdag = \A_{j-1}^{\dagger} \A_{i-1} ^\pdag
\end{equation}
This includes the original relation when $i = j = 1$, if we take $\A_0
^\pdag = \A_0^{\dagger}=1$.  This larger algebra has a physical
interpretation in terms of conformal field theory, in which operators
are replaced by holomorphic operator-valued fields on a Riemann
surface.  This will not concern us here, though see lecture notes by
Thomas~\cite{thomas-HVA} for a starting point.

The categorification of this larger algebra uses a larger monoidal
category $\cat{H} = \mathrm{Kar}(\cat{H}')$.  This makes use of the
\textit{Karoubi envelope} of a category $\cat{C}$ (also known in
some contexts as the \textit{idempotent completion}, or
\textit{pseudoabelian hull}). The Karoubi envelope $\mathrm{Kar}(\cat{C})$
is a universal category containing $\cat{C}$ such that every
idempotent morphism splits. Concretely, up to isomorphism,
$\Kar(\cat{C})$ can be constructed as the category with objects
\begin{equation}
  \{ (c,p) | c \in \cat{C}, p : c \ra c \text{, $p$ idempotent}\},
\end{equation}
and with morphisms $(c_1,p_2)$ to $(c_2,p_2)$ given by $f : c_1 \ra c_2$
such that $p_2 \circ f = f = f \circ
p_1$.  This is interpreted as adding new objects so that each
idempotent has a kernel and cokernel which are subobjects of its
source and target respectively.  The idempotent $p$ can then be
interpreted as projection onto the cokernel.

In particular, this introduces symmetrizer and
antisymmetrizer idempotents associated to the action of $\Sn$ on
products such as $A^n$ by the maps $\hat{\mu}$.  The new objects
introduced can be described as symmetric or antisymmetric tensor
products of the object $Q_{\pm}$ (or $A$ and $A^{\dagger}$ in our
setting).  These are called $S^n_{\pm}$ and $\bigwedge^n_{\pm}$
respectively, and defined as $S_{\pm}^n = S^n(Q_{\pm})$ and
$\Lambda_{\pm} = \Lambda^n(Q_{\pm})$. It is a consequence of their
construction as symmetrizers and antisymmetrizers that they satisfy
relations which are the analogs of the basic commutation relations for
the single-variable Heisenberg algebra:
\begin{equation}\label{eq:khoviso}
S_s^n \otimes \Lambda_+^m \cong (\Lambda_+^m \otimes S_-^n) \oplus (\Lambda_+^{m-1} \otimes S_-^{n-1})
\end{equation}
See Khovanov's paper~\cite{khovanov} for more details; there is nothing
substantially different in our setting.

These summands do not exist as spans of groupoids, where the direct
sum is just the disjoint union of spans, and as a result this aspect
cannot be groupoidified in our combinatorial model.  They do appear,
however, after we apply the 2-functor $\FV$, since $\iiV$ already contains
all such subobjects. This is a reflection of two basic facts: first,
that 2-vector spaces are abelian categories; second, that $\iiV$ is
compact closed (a version of this fact is proved in Yetter
\cite{yetter-cla}, Theorem 28.)

\begin{lemma}\label{lemma:karoubi-iiV}
  Given a monoidal category $\cat{C}$, any inclusion $i$ of
  $\cat{C}$ into the category of endofunctors of an object $\cat{V} \in
  \iiV$, extends to an inclusion $i'$ of $\Kar(\cat C)$:
\begin{equation}
\label{eq:karoubi-iiV-commute}
\begin{aligned}
\begin{tikzpicture}[yscale=1.5, xscale=2.5]
\node (1) at (0,0) {$\cat {C}$};
\node (2) at (1,0) {$\mathrm{End}_\cat{2Vect} (X)$};
\node (3) at (0,-1) {$\Kar (\cat C)$};
\draw [->] (1) to node [auto] {$i$} (2);
\draw [->] (1) to node [auto, swap] {} (3);
\draw [->] (3) to node [auto, swap] {$i'$} (2);
\end{tikzpicture}
\end{aligned}
\end{equation}
\end{lemma}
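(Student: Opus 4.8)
The plan is to derive the extension entirely from the universal property of the Karoubi envelope, reducing the whole statement to the single fact that the target category $\mathrm{End}_{\iiV}(X)$ has split idempotents. Recall that $\Kar(\cat{C})$ is characterized by the following universal property: for any idempotent-complete category $\cat{D}$ (one in which every idempotent splits) and any functor $F \colon \cat{C} \to \cat{D}$, there is a functor $\bar{F} \colon \Kar(\cat{C}) \to \cat{D}$, unique up to natural isomorphism, satisfying $\bar{F} \circ J = F$, where $J \colon \cat{C} \to \Kar(\cat{C})$ is the canonical embedding. Thus, once we know $\mathrm{End}_{\iiV}(X)$ is idempotent-complete, applying this property to $F = i$ produces the functor $i'$ making the triangle~\eqref{eq:karoubi-iiV-commute} commute.

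First I would establish the key step: that idempotents split in $\mathrm{End}_{\iiV}(X)$. An object of this category is a $\C$-linear endofunctor of $X$, a morphism is a natural transformation, and an idempotent is a natural transformation $p \colon F \Rightarrow F$ with $p \cdot p = p$. Because $X$ is a Kapranov--Voevodsky 2-vector space it is $\C$-linear, additive and semisimple, and in particular idempotent-complete as an ordinary category: for each object $x \in X$ the idempotent endomorphism $p_x \colon F(x) \to F(x)$ splits canonically as $F(x) \xrightarrow{r_x} G(x) \xrightarrow{e_x} F(x)$ with $e_x \circ r_x = p_x$ and $r_x \circ e_x = \id$. Setting $G(g) := r_y \circ F(g) \circ e_x$ for each morphism $g \colon x \to y$ makes $G$ into a functor, and naturality of $p$ guarantees that the $e_x$ and $r_x$ assemble into natural transformations $e \colon G \Rightarrow F$ and $r \colon F \Rightarrow G$ with $r \cdot e = \id_G$ and $e \cdot r = p$. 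Since the splittings are taken in the additive category $X$, they are compatible with biproducts, so $G$ again preserves the $\C$-linear structure and genuinely lies in $\mathrm{End}_{\iiV}(X)$. In short, idempotent-completeness passes from $X$ to its endofunctor category by splitting pointwise, and verifying that this pointwise splitting is both linear and natural is where I expect the only real work.

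With idempotent-completeness in hand, the universal property yields the desired $i'$ together with a natural isomorphism $i' \circ J \cong i$, which is exactly the commuting triangle~\eqref{eq:karoubi-iiV-commute}. It remains to check that $i'$ is again an \emph{inclusion}, i.e.\ fully faithful. This is formal: the Karoubi construction is functorial and preserves fully faithful functors, so the inclusion $i \colon \cat{C} \to \mathrm{End}_{\iiV}(X)$ induces a fully faithful functor $\Kar(i) \colon \Kar(\cat{C}) \to \Kar\big(\mathrm{End}_{\iiV}(X)\big)$; and since $\mathrm{End}_{\iiV}(X)$ is already idempotent-complete, its canonical embedding into $\Kar\big(\mathrm{End}_{\iiV}(X)\big)$ is an equivalence, whose pseudo-inverse composed with $\Kar(i)$ recovers $i'$ as a fully faithful functor. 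If one wishes to track the monoidal structure, note that $\Kar(\cat{C})$ is monoidal via $(c,p) \otimes (c',p') = (c \otimes c', p \otimes p')$ and that composition of linear endofunctors supplies the monoidal structure on $\mathrm{End}_{\iiV}(X)$, with respect to which the extension $i'$ is again monoidal. The entire content of the lemma is therefore concentrated in the pointwise-splitting argument above; everything else is a direct consequence of the universal property of the idempotent completion.
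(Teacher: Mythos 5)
Your proof is correct, but it reaches the conclusion by a different route than the paper. The paper's own argument first identifies $\mathrm{End}_{\iiV}(\cat V) \simeq \cat{V}^{op} \otimes \cat{V}$ as itself a Kapranov--Voevodsky 2-vector space, hence an abelian category in which every morphism has a cokernel, and then defines $i'$ \emph{explicitly}: on objects $i'(c,p) = \mathrm{coker}(i(p))$, and on a morphism $f : (c_1,p_1) \to (c_2,p_2)$ as the induced map between cokernels, well-defined because $f$ commutes with the projections. You instead establish idempotent-completeness of $\mathrm{End}_{\iiV}(X)$ by splitting each idempotent natural transformation pointwise in $X$ (using semisimplicity of $X$ itself) and assembling the splittings into a linear functor, and then obtain $i'$ abstractly from the universal property of $\Kar(-)$, with full faithfulness following from the facts that $\Kar$ preserves fully faithful functors and that the canonical embedding of an idempotent-complete category into its Karoubi envelope is an equivalence. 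The trade-offs: the paper's construction is shorter and completely concrete, but it leans on the identification of the endofunctor category with $\cat{V}^{op}\otimes\cat{V}$ and never actually verifies that $i'$ is an \emph{inclusion} (only that it is a well-defined functor); your pointwise-splitting argument is more elementary in that it needs nothing beyond idempotent-completeness of $X$, and your universal-property packaging yields uniqueness of the extension up to isomorphism and makes full faithfulness a formal consequence rather than an unaddressed claim. One small caveat in your version: the triangle~\eqref{eq:karoubi-iiV-commute} then commutes only up to natural isomorphism rather than strictly, but that is the appropriate notion here and does not affect the lemma.
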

\begin{proof}
  The functor category $\mathrm{End}_{\iiV}(\cat{V}) \simeq \cat{V}^{op}
  \otimes \cat{V}$ is also a 2-vector space, hence in particular is an
  abelian category. Thus, every morphism has a cokernel. Then we
  define $i'$ on objects by
\begin{align}
    i'(c,p) &= \mathrm{coker}(i(p)).
\end{align}
On morphisms, $i'$ is given by
  defining
  \begin{equation}
    i' \bigl{(} f : (c_1,p_1) \ra (c_2,p_2) \bigr{)}
  \end{equation}
to be the corresponding map between the subspaces $\mathrm{coker}(p_1)$ and $\mathrm{coker}(p_2)$.  This $i'$ is well-defined since by hypothesis $f$ commutes with the projections to the cokernels. It is a functor because it respects identities and composition.
\end{proof}

Schematically we have the following diagram, treating monoidal
categories as one-object 2-categories:
\begin{equation}
\begin{aligned}
\begin{tikzpicture}[yscale=1.5, xscale=2.5]
\node (1) at (0,0) {$\cat {\Omega_{H'}}$};
\node (2) at (1,0) {$\cat{Span(Gpd)}$};
\node (3) at (0,-1) {$\cat{\Omega_H}$};
\node (4) at (1,-1) {$\cat{2Vect}$};
\draw [->] (1) to node [auto] {$C$} (2);
\draw [->] (1) to node [auto, swap, inner sep=1pt] {$F'$} (4);
\draw [->] (3) to node [auto, swap] {$F$} (4);
\draw [->] (2) to node [auto] {$\Lambda$} (4);
\draw [->] (1) to node [auto, swap] {} (3);
\end{tikzpicture}
\end{aligned}
\end{equation}
This says that the representation of $\cat{H}'$ into $\iiV$, which factors through $\SiiG$, extends in this compatible way to a representation of $\cat{H}$.

\subsection{Symmetrizers in $\iiV$}

We will consider here how the symmetrizers appear in concrete terms
within $\iiV$ after 2-linearizing our combinatorial interpretation.

The natural isomorphism
\begin{equation}
\label{eq:actionsigmahat}
\Lambda(A) \circ \Lambda(A) \sxto {\Lambda(\hat{\sigma})} \Lambda(A) \circ \Lambda(A) 
\end{equation}
that permutes the order of the operators $\Lambda(A)$ can be
represented in matrix form as a collection of linear maps, each acting
on a component of the matrix representation of $\FV(A) \circ \FV(A)$.
That is, each of the vector spaces that appear in this matrix carries
a particular action of the symmetric group $S_2$.  The symmetrizer
operation selects the subspace which is fixed by this action, and the
antisymmetrizer operation selects the complement of this subspace.

More generally, each component in $A^n$ is a representation of $\Sn$,
which decomposes into irreducibles, and the symmetrizer takes the
sub-representation consisting of exactly the copies of the trivial
representation which appear in this decomposition.  Similarly, the
antisymmetrizer takes only the copies of the sign representation.
For any irreducible representation of $\Sn$ there will be a
correspondingly `symmetrized' sub-functor of $A^n$. These operations
are the `Young symmetrizers' associated to the Young diagrams which
index such representations.

In particular, the subobjects which are needed to produce all the
generators of the Heisenberg algebra are those arising from the
symmetrizer idempotent for $\FV(A^n)$ or $\FV((A^{\dagger})^n)$ which
come from the symmetric group action
\begin{align}
\label{eq:symmetrizer}
  v &\mapsto \sum_{\mu \in \Sn} \hat{\mu}(v)
\intertext{and the antisymmetrizer}
\label{eq:antisymmetrizer}
  v &\mapsto \sum_{\mu \in \Sn} \mathrm{sign}(\mu) \hat{\mu}(v) \, .
\end{align}
We can illustrate this by considering the product $\FV(A \circ A)
\cong \FV(A) \circ \FV(A)$.  The nontrivial permutation $\sigma$ of
the 2-element set gives the 2-cell
\begin{equation}
A \circ A \sxto {\hat{\sigma}} A \circ A,
\end{equation}
and thus, combined with the natural isomorphism above, we obtain the
natural isomorphism~\eqref{eq:actionsigmahat}.

The composite $\FV(A) \circ \FV(A)$ is zero except for blocks of the form
\begin{equation}
M ^2_{i,i+2} = M ^{} _{i,i+1} \otimes M ^{} _{i+1,i+2} \,.
\end{equation}
From here on, we will use the notation $M_{i,j}$ to denote the matrix
of vector space which is the $(i,j)\mathrm{th}$ block entry of the
matrix $\FV(A)^{j-i}$. Up to a natural isomorphism, these can be
expressed in matrix form:
\begin{equation}
M_{0,2} =
\begin{pmatrix}
\C & \C
\end{pmatrix}
\end{equation}
\begin{equation}
M_{1,3} =
\begin{pmatrix}
\C & \C^2 & \C
\end{pmatrix}
\end{equation}
\begin{equation}
M_{2,4} =
\begin{pmatrix}
\C & \C^2 & \C & \C & 0 \\
0 & \C & \C & \C^2 & \C
\end{pmatrix}
\end{equation}
As with the basic blocks $M_{n,n+1} $, a general block $M _{i,j}$ can
be interpreted as giving a decomposition in matrix form of the
$\C[S_i] {-} \C[S_j]$--bimodule which effects the induction functor
\begin{equation}
\Rep(S_i) \sxto{\raisebox{3pt}{$\Ind _{S_i} ^{S_j}$}} \Rep(S_j)
\end{equation}
associated to the natural inclusion of $S_i$ into $S_j$.  That is,
given a representation \mbox{$\rho \in \Rep(S_i)$}, the induced
representation of $S_j$ is $\rho \otimes_{\C[S_i]} M_{i,j}$.

\begin{example}
  The block $M_{2,4}$ has component $\C^2$ in the position indexed by
  $\left ( \!\inalign{\tiny\yng(2)} \, , \inalign{\tiny\yng(3,1)}
    \,\right)$.  As we have remarked, this reflects the fact that
  there are two paths in the directed graph (\ref{eq:ydgraph}) from
  {\tiny{$\yng(2)$}} to {\tiny{$\inalign{\yng(3,1)}$}}, passing
  through {\tiny{$\yng(3)$}} or {\tiny{$\inalign{\yng(2,1)}$}}.

  A Young tableau is a way of filling the boxes of the diagram with
  the numbers $1, \dots, n$ such that they denote a valid order in
  which to add the boxes while following a path through
  (\ref{eq:ydgraph}). It is a standard fact that the tableaux which
  can fill a given Young diagram label basis elements of the
  irreducible representation labeled by that diagram.  Thus, we see
  that the component $\C^2$ acts to exchange the two basis elements.
  This is a permutation representation on the basis of $\C^2$, which
  decomposes as:
  \begin{equation}
    \inalign{\yng(2)} \,\oplus\, \inalign{\yng(1,1)}
  \end{equation}
  The symmetrizer and antisymmetrizer each select just one of these
  summands.  The two other components are both trivial $S_2$
  representations.  So the symmetrized functor has a matrix form with
  a $(2,4)$ block:
  \begin{equation}
    \begin{pmatrix}
      \C & \C & \C & \C & 0 \\
      0 & \C & \C & \C & \C
    \end{pmatrix}
  \end{equation}
  The antisymmetrized functor has the following block:
  \begin{equation}
    \begin{pmatrix}
      0 & \C & 0 & 0 & 0 \\
      0 & 0 & 0 & \C & 0
    \end{pmatrix}
  \end{equation}
\end{example}

\begin{example}
  The situation becomes more complex for $A^n$ with larger values of
  $n$. We now look at the first case where there are three new boxes
  to add, and the order is unconstrained. Consider the product
  $\FV(A^3) \cong \FV(A)^3$, and in particular look at the block
  \begin{equation}
    M_{3,6} = 
    \begin{pmatrix}
      \C & \C^3 & \C^3 & \C^3 & \C & \C^2 & 0 & \C & 0 & 0 & 0 \\
      0 & \C & \C^3 & \C^3 & \C^2 & \C^6 & \C^2 & \C^3 & \C^3 & \C & 0 \\
      0 & 0 & 0 & \C & 0 & \C^2 & \C & \C^3 & \C^3 & \C^3 & \C
    \end{pmatrix}
  \end{equation}
  This can be easily checked by calculation, but in particular, the
  component indexed by {\tiny{$\inalign{\yng(2,1)}$}} and
  {\tiny{$\inalign{\yng(3,2,1)}$}} is $\C^6$, which reflects the fact
  that there are six paths through Young's lattice from the former to
  the latter. In particular, however, these paths form a cube, since
  the boxes may be added in any order.
  
  By an analogous argument to the one in the previous example, this
  becomes a representation of $S_3$ by permutations of the three new
  boxes, and hence the axes of the cube of paths in Young's lattice.
  The basis for the component $\C^6$ consists of the labellings of the
  three boxes by numbers $\{ 4,5,6 \}$.  These can be appended to any
  Young tableau in {\tiny{$\inalign{\yng(2,1)}$}}, and thereby
  generate a basis for the tensor product with the bimodule which $M
  ^3_{3,6}$ depicts.  Thus, $S_3$ acts by permutations on the set of
  orders of $\{ 4,5,6 \}$.  This is equivalent, as an $S_3$
  representation, to the action on the regular representation
  $\C[S_3]$ itself.  This decomposes as
  \begin{equation}
    \inalign{\yng(3)} \,\oplus\, 2 \inalign{\yng(2,1)} \,\oplus\, \inalign{\yng(1,1,1)}
  \end{equation}
  So again, we have a one-dimensional subspace in this position for
  either the symmetrized or antisymmetrized product. This block is the
  first case where the other Young symmetrizers, in this case the one
  associated to {\tiny{$\inalign{\yng(2,1)}$}}, will give a nontrivial
  component.
\end{example}

We have seen here that the functors $\FV(A)$ and $\FV(A^{\dagger})$
have, as sub-functors, precisely the symmetric and antisymmetric
powers $S^n_{\pm}$ and $\bigwedge^n_{\pm}$ which will give the actual
generators of the multivariable Heisenberg algebra when passing to the
Grothendieck ring. So, after taking the image under $\FV$ of our
groupoidification, passing to the completion of the image recovers
Khovanov's full categorification. This is closely related to a
construction used by Khovanov in the proof that there is a map
$H_{\mathbb{Z}}$ from the integral form of the Heisenberg algebra into
the Grothendieck group of $\cat{H}$.

\section{Combinatorial models of categorified $\Usln$}\label{sec:Usln}
\label{sec:sln}

\subsection{Introduction}

We now investigate how the techniques described so far can be applied
to find combinatorial models of categorifications of $\Usln$, the
universal enveloping algebras of the Lie algebras $\sln$.

These categorifications are in the style of the Khovanov-Lauda
programme~\cite{KL, lauda-sl2-intro, lauda-sl2}. The main objects of
interest in that programme are categorifications of the $q$-deformed
enveloping algebras, the quantum groups $\Uqsln$. However, the
combinatorial interpretation we describe here only applies to the case
$q=1$, in which case the Grothendieck ring is a module over the
integers $\mathbb{Z}$.  In the $q$-deformed situation, one might
expect to obtain a module over the ring $\mathbb{Z}[q,q^{-1}]$ of
formal power series in $q$.

We expect that a similar combinatorial model should exist of the
categorification of the full $q$-deformed algebra, following the
pattern of the groupoidification of Hecke algebras~\cite{hdavii}.  In
this approach, the groupoid of finite sets is replaced with the
groupoid of finite-dimensional vector spaces over a finite fields
$\mathbb{F}_q$.  The special linear groups for such vector spaces take
the role of symmetric groups $\Sn$, and their size is related to
$q$-factorials.

\subsection{The algebras $\Usln$}

We write $\sln$ for the Lie algebra associated to the special linear
group $SL(n)$, which is given as the group of $n \times n$ matrices
with unit determinant.  Its Lie algebra would then consist of the
traceless $n \times n$ matrices.  It is presented as an abstract
algebra in terms of generators $h_i$, $e_i$ and $f_i$ for $i \in \{1,
\dots ,n\!-\!1\}$, satisfying the following relations:
\begin{align}
\label{eq:slnrelation1}
[e_i, f_j] &= \delta _{ij} h_i
\\
[e_i, h_j] &= 2 \delta _{ij} e_i
\\
\label{eq:slnrelation3}
[f_i, h_j] &= 2\delta_{ij} f_i
\end{align}
This is a special case of the standard presentation, in the
Chevalley basis, of the Lie algebra generated by Cartan data associated
to a particular Dynkin diagram~\cite{gilmore, fuchschw}.  In the
representation of $\mathrm{sl}_2$ as the traceless matrices acting on
$\mathbb{C}^2$, these elements have the following concrete
realization:
\begin{equation}
e = \begin{pmatrix} 0 & 1 \\ 0 & 0\end{pmatrix}
\qquad
f = \begin{pmatrix} 0 & 0 \\ 1 & 0 \end{pmatrix}
\qquad
h = \begin{pmatrix} 1 & 0 \\ 0 & -1 \end{pmatrix}
\end{equation}
Just as the Heisenberg algebra has a natural representation on the
polynomial ring $\Cz$, the universal enveloping algebras $\Usln$ have
natural representations on polynomial rings $\Czz$, defined in the
following way:
\begin{equation}
\Czz = \bigoplus_j \big(\mathbb{C}^n \big)^{\otimes_s j}
\end{equation}
In this polynomial representation, the generators $e_i$ and $f_i$ of $\Usln$ become
\begin{align}
\label{eq:slnrep1}
e_i &= z_{i+1} \partial_i \\
f_i &= z_i \partial_{i+1},
\intertext{and their commutator is}
\label{eq:slnrep3}
h_i &= z_{i+1} \partial_{i+1} - z_i \partial_i.
\end{align}
This extension gives back the defining representation of $\sln$ when
$\mathbb{C}^n$ is identified with the space of degree-1 polynomials.

This representation of $\Usln$ suggests defining
\begin{align}
n_i &:= z_i \partial_i,
\intertext{so that we have}
h_i &= n_{i+1} - n_i.
\end{align}
Rewriting our Lie algebra
relations~(\ref{eq:slnrep1}\_\ref{eq:slnrep3}) in terms of these new
variables, and expanding out commutators explicitly and reorganizing
to eliminate negatives, we obtain the following:
\begin{align}
\label{eq:catrel1}
e_i f_j + \delta _{ij} n_i &= f_j e_i + \delta _{ij} n_{i+1}
\\
e_i n_{j+1} + n_j e_i &= e_i n_j + n_{j+1} e_i + 2 \delta_{ij} e_i
\\
\label{eq:catrel3}
f_i n_{j+1} + n_j f_i &= f_i n_j + n_{j+1} f_i + 2 \delta_{ij} f_i
\end{align}
In this form, the relations are now suitable for groupoidification.
We note that these relations are a special case of the general pattern
for the construction of a Kac-Moody algebra from a Cartan matrix.  In
the case of these Lie algebras, the Cartan matrix is determined by the
Dynkin diagram associated to the algebra.  So the pattern for
groupoidifying these algebras should apply in these more general cases
as well.

\subsection{Groupoidification}

The groupoidification of $\sln$ is based on the groupoid of
$n$-coloured finite sets and colour-preserving bijections, written
$\nCFSN$. This is equivalent to the category where objects are
$n$-tuples of finite sets and morphisms are $n$-tuples of bijections,
which is the Cartesian product of $n$ copies of \FSN. The sets of
colour $i$ form the $i$th component of the cartesian product. A third
way to describe $\FSN ^n$ up to equivalence is as the free symmetric
monoidal groupoid on the discrete groupoid with $n$ objects.

\newcommand\p[1]{+1_#1} We write $\FSN^n \sxto {\p{i}} \FSN^n$ for the
functor which acts as $+1$ on the sets of colour $i$, and the identity
otherwise. For any $i \in \{ 1 , \ldots , n \}$, we use this to define
the spans $A _i ^\pdag$ and $A_i ^\dag$ as follows:
\begin{equation}
\begin{aligned}
\begin{tikzpicture}[xscale=2, yscale=1.5]
\node (T) at (0,0) {$\FSN^n$};
\node (L) at (-1,-1) {$\FSN^n$};
\node (R) at (1,-1) {$\FSN^n$};
\draw [->] (T) to node [auto, swap] {\id} (L);
\draw [->] (T) to node [auto] {$\p i$} (R);
\end{tikzpicture}
\end{aligned}
\hspace{70pt}
\begin{aligned}
\begin{tikzpicture}[xscale=2, yscale=1.5]
\node (T) at (0,0) {$\FSN^n$};
\node (L) at (-1,-1) {$\FSN^n$};
\node (R) at (1,-1) {$\FSN^n$};
\draw [->] (T) to node [auto, swap] {$\p i$} (L);
\draw [->] (T) to node [auto] {\id} (R);
\end{tikzpicture}
\end{aligned}
\end{equation}
These are `multicoloured' variants of the ordinary annihilation and
creation spans defined in equation~\eqref{eq:loweringraising}. They
satisfy an adjusted version of the categorified commutation relation:
\begin{equation}
A_i ^\pdag \circ A _j ^\dag \simeq A _j ^\dag \circ A_i ^\pdag \oplus \delta _{ij} \id_{\FSN ^n}
\end{equation}
Here $\delta _{ij}$ represents the identity span if $i=j$, and the zero span otherwise.

To obtain our model of categorified $\Usln$, we arbitrarily assign a
total ordering to our set $n$ of colours. We then make the following
definitions for all $i \in \{1,\ldots,n\!-\!1\}$:
\begin{align}
E_i &:= A ^\dag _{i+1} \circ A _i ^\pdag
\\
F_i &:= A ^\dag _i \circ A _{i+1} ^\pdag
\intertext{These are mutually-converse spans for each $i$. We also define the number operator $N_i$ for each colour $i$ as follows:}
N_i &:= A_i ^\dag \circ A_i ^\pdag
\end{align}
This span is its own converse. The operators $E_i$, $F_i$ and $N_i$
satisfy a categorified form of the reorganized
relations~\mbox{(\ref{eq:catrel1}\_\ref{eq:catrel3})} for $\Usln$:
\begin{align}
\label{eq:firstcatsln}
E_i F_j \oplus \delta _{ij} N_i &\simeq F_j E_i \oplus \delta _{ij} N_{i+1}
\\
E_i N_{j+1} \oplus N_j E_i &\simeq E_i N_j \oplus N_{j+1} E_i \oplus 2 \delta_{ij} E_i
\\
F_i N_{j+1} \oplus N_j F_i &\simeq F_i N_j \oplus N_{j+1} F_i \oplus 2 \delta_{ij} F_i
\end{align}
The reason for this is combinatorial, and can be understood by
explicitly tracking histories. For instance, in the first relation,
consider the span $E_i F_j$.  This is equal to $A^{\dag}_{j} \circ
A_i^{\pdag} \circ A_i ^\dag \circ A_{j} ^{\pdag}$, a sequence of
adding and removing elements of colours $i$ and $j$.  In the case
where $i \neq j$ the operators $A_{i}$ and $A_{j}$ commute, so this is
isomorphic to $A_i^{\pdag} \circ A_i^{\dag} \circ A_{j}^{\dag} \circ
A_{j}^{\pdag}$.  Since the spans $A_i$ and $A_j$ each satisfy the
usual categorified commutation relations for the Heisenberg algebra,
this is isomorphic to $(N_i \oplus \id) \circ N_j \simeq N_i N_j
\oplus N_j$.  A similar computation holds on the other side, and so
the isomorphism~\eqref{eq:firstcatsln} can be established.  Applied to
a coloured set with $n_i$ elements of colour $i$, and $n_j$ elements
of colour $j$, both sides compute $n_i n_j + n_i + n_j$.  If we add
the identity on each side, this relation for $E_i$ and $F_j$ witnesses
the isomorphism of two different ways to count $(n_i +1)(n_j +1)$, the
number of ways to distinguish either zero or one elements of each
colour in any given set.

Applying the degroupoidification functor~\eqref{eq:lambda} to $E_i$,
$F_i$ and $N_i$ recovers the standard
action~(\ref{eq:slnrelation1}\_\ref{eq:slnrelation3}) of $\Usln$ on
spaces of polynomials.  The coefficients obtained by the
multiplication and differentiation are exactly the numbers we have
just calculated in the example.  Just as with the Heisenberg algebra,
the algebraic facts about the relations they satisfy now appear as
consequences of combinatorial facts about coloured sets. In this way,
we have a groupoidification of the concrete representation on $\Czz$
of the algebra $\Usln$.  In the same way, the abstract
categorifications of Khovanov-Lauda type are realized concretely by
endomorphisms of an object in $\SiiG$.

We can make a further comment about this groupoidification from a
physical point of view.  As described in
Section~\ref{sec:combinatorics}, the groupoidification of `states' in
Fock space for the quantum harmonic oscillator can be described as
stuff types, a generalization of combinatorial species.  That is, such
a state is a groupoid $\cat X$ equipped with a functor $\cat X \sxto
\Phi \FSN$.  We think of $\cat X$ as an ensemble of structures whose
underlying finite sets are specified by $\Phi$, and which have a
notion of symmetry that is preserved by $\Phi$.  Combinatorial species
are exactly the case where $\Phi$ is faithful, meaning that morphisms
of $\cat X$ are determined by the underlying bijection of finite
sets. Considering a stuff type as a span $\cat X \sxlto \Phi\cat S \to
\cat 1$ and degroupoidifying it, we obtain a linear map $\mathbb{C}
\to \Cz$, which specifies a power series in $z$ by considering the
image of $1 \in \mathbb C$. This is the generating function in the
ordinary sense for the combinatorial structure~\cite{wilf}.

In the same way, a state in the Fock space for our groupoidified
$\Usln$ can be described as a groupoid $\cat X$ equipped with $\cat X
\sxto \Psi \nCFSN$, thought of as a span out of $\cat{1}$.  This is a
generalization of a \textit{multisort species} \cite{bll} which
describe structures on coloured sets, in the same way that stuff types
generalize species.  Thus, we can describe such a state as a
\textit{multisort stuff type}.  Again, degroupoidification of such a
span determines an element of $\Czz$, so a multisort stuff type
determines a vector in the multiple-variable Fock space.  The
groupoidified $\Usln$ acts on mulitsort stuff types just as the
groupoidified Heisenberg algebra acts on stuff types.

\bibliographystyle{plain}
\bibliography{references}

\end{document}